\documentclass[11pt,reqno,a4paper]{amsart}
\usepackage{color,enumerate,cancel,hyperref,mathtools,tikz-cd,amssymb,dsfont}
\newtheorem{theorem}{Theorem}[section]
\newtheorem{lemma}[theorem]{Lemma}
\newtheorem{problem}[theorem]{Problem}
\newtheorem{proposition}[theorem]{Proposition}
\newtheorem{corollary}[theorem]{Corollary}

\theoremstyle{definition}
\newtheorem{definition}[theorem]{Definition}

\theoremstyle{remark}
\newtheorem{remark}[theorem]{Remark}
\numberwithin{equation}{section}

\begin{document}

\title[Derivatives of Tensor Products and Applications to Spaces $C(K^n,X)$]{Derivatives of Tensor Products and Applications to Spaces $C(K^n,X)$}
\author{Leandro Candido}
\address{Universidade Federal de S\~ao Paulo - UNIFESP. Instituto de Ci\^encia e Tecnologia. Departamento de Matem\'atica. S\~ao Jos\'e dos Campos - SP, Brasil}
\email{\texttt{leandro.candido@unifesp.br}}
\thanks{ The author was supported by Funda\c c\~ao de Amparo \`a Pesquisa do Estado de S\~ao Paulo - FAPESP No. 2023/12916-1 }

\subjclass{Primary 46B26, 46B20, 46E15; Secondary 46M05, 46B28, 54F05}


\keywords{Banach spaces of continuous functions; Semadeni derivative;
bidual assignments; testing ideals; injective tensor products;
compact trees.}

\begin{abstract}
In this paper we develop an abstract theory of derivatives for Banach spaces
based on objects that we call \emph{bidual assignments}. This framework
encompasses both the Semadeni derivative and the recently
introduced Semadeni--Pe{\l}czy\'nski derivative. More generally, suitable
ideals of subsets of the dual space give rise to a broad family of derivatives
within this setting.

We establish addition and product formulas for these derivatives, showing that
they behave naturally with respect to direct sums and injective tensor
products. As an application, we compute iterated derivatives for a number of spaces
$C(K)$ associated with scattered compacta, including scattered compact lines
and compact trees.

As a further application, we establish classification results for spaces of the
form $C(K^n,X)$. In particular, for uncountable ordinals $\alpha$ and
$\beta$, an integer $n\geq 1$, and Banach spaces $X$ satisfying suitable
rigidity assumptions, we prove that
\[
C([0,\alpha]^n,X)\sim C([0,\beta]^n,X)
\quad\text{if and only if}\quad
C([0,\alpha])\sim C([0,\beta]).
\]
This extends Kislyakov's classification of spaces $C([0,\alpha])$ and its
vector-valued extension due to Galego to finite powers of ordinal intervals.
\end{abstract}

\maketitle

\section{Introduction}

The notion of a derivative of a Banach space, as understood in the present
work, originates in a seminal paper of Semadeni \cite{Se2}. Given a Banach
space $X$, Semadeni considered the subspace $\Delta(X)\subset X^{**}$
consisting of all weak$^*$-sequentially continuous functionals on $X^*$,
and observed that the quotient
\[
\mathcal S(X)=\Delta(X)/X
\]
is an isomorphic invariant of $X$. He computed
\[
\mathcal S(C([0,\omega_1]))\sim\mathbb R
\qquad\text{and}\qquad
\mathcal S(C([0,\omega_1])\oplus C([0,\omega_1]))
\sim
\mathbb R^2,
\]
thereby proving that $C([0,\omega_1])$ is not isomorphic to its square. This
provided the first example of a Banach space $X$ satisfying $X\not\sim X\oplus X$,
solving a problem posed by Banach \cite{Banach}.

Semadeni's ideas were subsequently developed by Kislyakov
\cite{Kislyakov}, who employed related derivatives to obtain a classification
of the spaces $C([0,\alpha])$, where $\alpha$ is an uncountable ordinal.
Later, Galego extended these methods to the vector-valued setting,
obtaining several classification results for spaces of the form $C(K,X)$,
where $K$ is an ordinal interval and $X$ is a Banach space; see, for
instance, \cite{galego1,galego2,galego3,galego4}.

More recently, Korpalski \cite{KorpalskiThesis,Korpalski} introduced the
Semadeni--Pe{\l}czy\'nski derivative and studied it systematically in the
setting of spaces of continuous functions on compact lines. In particular, he
applied this derivative to spaces of the form $C(K^n)$, where $K$ is a compact
line. His work builds on ideas originating in Semadeni's work and further
developed in \cite{CandidoSquares}.

The present paper is motivated by the observation that all the constructions
mentioned above share a common underlying structure. We introduce an abstract
notion of derivative based on what we call a \emph{bidual assignment}
(Definition~\ref{Def:BidualAssignment}), that is, a rule assigning to each
Banach space $X$ a closed subspace
\[
X\subset \Delta(X)\subset X^{**}
\]
which behaves functorially with respect to bounded linear operators.
Associated with such an assignment, we consider the derivative
\[
\mathcal{S}_\Delta(X)=\Delta(X)/X,
\]
which provides an isomorphic invariant of the Banach space $X$.

One of the main sources of examples in this paper is provided by what we call
\emph{testing ideals} $\mathfrak I$ (Definition \ref{Def:Test}), namely ideals of subsets of dual spaces
satisfying suitable stability properties. Such ideals induce bidual
assignments $\Delta_{\mathfrak I}$, and therefore derivatives
$\mathcal S_{\mathfrak I}$.

Among the main structural results of the paper are addition and product formulas for derivatives associated with testing ideals (Theorems~\ref{Thm:IsometryDerivative} and~\ref{Thm:TensorQuotientDerivative}). Under suitable assumptions, these derivatives interact naturally with $c_0$-sums, $\ell_p$-sums and injective tensor products, see Theorem~\ref{Thm:ellpsumsofderivatives}.

These structural results are then applied to spaces of continuous functions
associated with scattered compact Hausdorff spaces, including compact trees,
ordinal intervals, and their finite products. The resulting formulas for the
corresponding derivatives (Theorem~\ref{Thm:ProductRule}) lead to our main
application, Theorem~\ref{Thm:MainSuper}, namely a classification theorem for
spaces of the form $C([0,\alpha]^n,X)$, where $\alpha$ is an uncountable
ordinal and $X$ is a Banach space satisfying suitable rigidity assumptions.
More precisely, we prove that
\[
C([0,\alpha]^n,X)\sim C([0,\beta]^n,X)
\quad\text{if and only if}\quad
C([0,\alpha])\sim C([0,\beta]).
\]

whenever $X$ is a Mazur space with the Gelfand--Phillips property,
contains no copy of $c_0$, and satisfies
$X^p\not\sim X^q$ for distinct positive integers $p$ and $q$.
Our classification theorem extends Kislyakov's classification of spaces
$C([0,\alpha])$ and its vector-valued extension due to Galego from ordinal
intervals to their finite powers.

The paper is organized as follows. In Section~\ref{Sec:Prelim}, we introduce the basic terminology and notation used throughout the paper. In Section~\ref{SeC:BidualAssignments}, we define bidual assignments and the derivatives they generate, and study their behavior with respect to direct sums. In Section~\ref{Sec:Injective}, we introduce testing ideals and establish a product formula for the associated derivatives under suitable assumptions. In Section~\ref{Sec:DerivativesC(K,Y)}, we verify these assumptions for Banach spaces of the form $C(K)$. More specifically, we consider scattered compact Hausdorff spaces and compact lines, and isolate conditions under which the results of the previous sections apply. Finally, in Section~\ref{Sec:Applications}, we apply the general theory to the testing ideals $\mathfrak I_\kappa$ and $\mathfrak I_{<\kappa}$, obtain explicit formulas for compact trees and finite products of ordinal intervals, and prove the main classification theorem. The appendix contains two auxiliary results on compact lines that are needed for some  applications developed in the paper.

\section{Terminology and Preliminaries}
\label{Sec:Prelim}

In this section we collect the basic terminology, notation, and conventions
used throughout the paper.

Given a topological space $K$, the character of a point $x\in K$, denoted by
$\chi(K,x)$, is the least cardinality of a local base at $x$.

We shall also consider compact trees endowed with the interval topology,
henceforth simply called compact trees. Recall that a tree $K$ is a partially
ordered set such that, for every $x\in K$, the set of predecessors $\operatorname{pred}(x)=\{y\in K:y<x\}$
is well ordered. The interval topology on $K$ is generated by sets of the form $(s,t]=\{x\in K:s<x\leq t\}$,
together with all singletons $\{x\}$ such that $x$ is a minimal element of
$K$; see \cite{Tod}.

The height of a point $x\in K$ is the order type of
$\operatorname{pred}(x)$, that is, $\operatorname{ht}(x)=\operatorname{Ord}(\operatorname{pred}(x))$,
and the height of the tree is given by $\operatorname{ht}(K)=\sup\{\operatorname{ht}(x)+1:x\in K\}$.

We now turn to Banach-space terminology. For Banach spaces $X$ and $Y$, we
write $X\sim Y$ whenever $X$ and $Y$ are linearly isomorphic. If $X$ and $Y$
are isometrically isomorphic, we write $X\cong Y$.

For every Banach space $X$, $X^*$ denotes its topological dual and $X^{**}$
its bidual. We always identify $X$ with its canonical image in $X^{**}$.

If $Y$ is a closed subspace of $X$, the canonical quotient map of $X$ onto $X/Y$ will be
denoted by $x\mapsto [x]$.

We now recall some classical facts concerning spaces of continuous functions.
For a compact Hausdorff space $K$ and a Banach space $X$, $C(K,X)$ denotes the Banach space of all
continuous functions $f:K\to X$, endowed with the supremum norm. In the scalar case $X=\mathbb{R}$, we use the standard notation $C(K)$. By the Riesz representation theorem \cite[Theorem 18.4.1]{Se},
$C(K)^*$ can be identified isometrically with the Banach space $\mathcal M(K)$ of Radon measures on $K$, endowed with the variation norm. In particular, when $K$ is a scattered compact Hausdorff space, we use the canonical identifications $C(K)^*=\ell_1(K)$ and $C(K)^{**}=\ell_\infty(K)$.

For a Banach space $X$, we say that $B_{X^*}$ is weak$^*$-sequentially compact
if every sequence in $B_{X^*}$ admits a weak$^*$-convergent subsequence. A Banach space $X$ is said to have the approximation property (AP) if, for every
compact set $K\subset X$ and every $\varepsilon>0$, there exists a finite-rank
operator $T:X\to X$ such that $\|Tx-x\|<\varepsilon$ for every $x\in K$.

For Banach spaces $X$ and $Y$, we denote by
$X\widehat{\otimes}_{\varepsilon}Y$ and
$X\widehat{\otimes}_{\pi}Y$
their injective and projective tensor products, respectively.
We shall repeatedly use the canonical isometric identification
\[
C(K)\widehat{\otimes}_{\varepsilon}Y \cong C(K,Y);
\]
see \cite{Ryan}. 

Let $S$ be a set. An \emph{ideal} on $S$ is a family
$\mathcal I\subset\mathcal P(S)$ containing the empty set and closed under
taking subsets and finite unions.

\section{Bidual Assignments and Derivatives of Banach Spaces}
\label{SeC:BidualAssignments}
The aim of this section is to isolate a general framework underlying several
concepts that share a common mechanism, as seen in works such as
\cite{Se2,CandidoSquares,Kislyakov,KorpalskiThesis,Korpalski} and in
Galego's classification theory for spaces of continuous vector-valued
functions \cite{galego1,galego2,galego3,galego4}.

\begin{definition}\label{Def:BidualAssignment}
A \emph{bidual assignment} is a rule $\Delta$ which assigns to each Banach space $X$
a closed linear subspace $X \subset \Delta(X) \subset X^{**}$,
such that for every bounded linear operator $T:X\to Y$, one has
$T^{**}(\Delta(X))\subset \Delta(Y)$. This condition will be called
\emph{functoriality}.

For a bidual assignment $\Delta$, we define the \emph{derivative of $X$ with respect to $\Delta$} as the quotient space
\[
\mathcal{S}_{\Delta}(X)=\Delta(X)/X.
\]
\end{definition}

The first example of a bidual assignment is the trivial assignment, namely,
$\Delta(X)=X$ for every Banach space $X$. As examples of nontrivial and useful bidual assignments, we highlight the following.

\medskip

\noindent
\textbf{The Semadeni assignment.}
\[
\Delta_{\mathfrak{s}}(X)
=
\{x^{**}\in X^{**}: x^{**}\text{ is weak}^*\text{-sequentially continuous}\},
\]
which gives rise to the Semadeni derivative $\mathcal{S}(X)$, see \cite{CandidoSquares}.

\medskip

\noindent
\textbf{The Korpalski assignment.}
For each infinite cardinal $\kappa$,
\[
\Delta_{\kappa}(X)
=
\big\{x^{**}\in X^{**}:
\forall\,A\subset X^*\ \text{with }|A|\leq\kappa,\
\exists\,x\in X\ \text{such that }x^{**}|_{A}=x|_{A}
\big\},
\]
which gives rise to the Semadeni--Pe\l czy\'nski derivative
$\mathcal{SP}_{\kappa}(X)$, see \cite{Korpalski}.

\medskip

\noindent
\textbf{The Kislyakov assignment.}
For every uncountable regular ordinal $\gamma$,
\[
\Delta_{\gamma}(X)
=
\Big\{x^{**}\in X^{**}:
\forall\,\beta<\gamma\ \text{limit},\
(f_\alpha)_{\alpha<\beta}\subset X^*\ \text{bounded},\
f_\alpha\xrightarrow{\sigma(X^*,X)}0
\Rightarrow
x^{**}(f_\alpha)\to 0
\Big\},
\]
which gives rise to the derivative used in the isomorphic classification of the spaces
$C([0,\alpha])$ for uncountable ordinals $\alpha$, see \cite{Kislyakov}.

\begin{remark}
It is straightforward to check that for every infinite cardinal $\kappa$, the Korpalski assignment is always contained in the Semadeni assignment, that is, $\Delta_\kappa(X)\subset \Delta_{\mathfrak{s}}(X)$
for every Banach space $X$. A similar inclusion holds for the Kislyakov assignment for every uncountable regular ordinal $\gamma$, namely, $\Delta_{\gamma}(X)\subset \Delta_{\mathfrak{s}}(X)$.
Since this inclusion implies several useful properties for the associated derivative, we shall adopt it as an additional desirable condition on a bidual assignment.
\end{remark}

\begin{definition}\label{Def:SemadeniCondition}
A bidual assignment $\Delta$ is said to satisfy the \emph{Semadeni condition} if, for every Banach space $X$, every weak$^*$-null sequence $(x_n^*)_n$ in $X^*$, and every $\Phi\in \Delta(X)$, one has $\Phi(x_n^*)\to 0$. Equivalently, for every Banach space $X$, $\Delta(X)\subset \Delta_{\mathfrak{s}}(X)$.
\end{definition}

\begin{remark}
A Banach space is said to have the Mazur property if every weak$^*$-sequentially continuous functional $x^{**}\in X^{**}$ is norm continuous. As mentioned in \cite[Section 3]{CandidoSquares}, a Banach space has the Mazur property if and only if its Semadeni derivative $\mathcal{S}(X)$ is trivial. In the notation of bidual assignments, this is equivalent to saying that $\Delta_{\mathfrak{s}}(X)=X$. This motivates the following definition.
\end{remark}

\begin{definition}
Let $\Delta$ be a bidual assignment. We say that a Banach space $X$
has the \emph{$\Delta$-Mazur property} (or that $X$ is a \emph{$\Delta$-Mazur space}) if $\Delta(X)=X$.
Equivalently, $X$ is a $\Delta$-Mazur space if and only if
$\mathcal{S}_{\Delta}(X)$ is trivial.
\end{definition}

\begin{remark}
The classical Mazur property corresponds to the $\Delta_{\mathfrak{s}}$-Mazur property, where $\Delta_{\mathfrak{s}}$ denotes the Semadeni assignment. To simplify the terminology, we shall simply call it the \emph{Mazur property}, omitting the reference to $\Delta_{\mathfrak{s}}$. It is immediate that if a bidual assignment $\Delta$ satisfies the Semadeni condition and $X$ has the Mazur property, then $X$ is a $\Delta$-Mazur space.
\end{remark}

The next theorem shows that whenever $\Delta$ is a bidual assignment, the corresponding derivative is an isomorphic invariant. The proof essentially follows that of \cite[Proposition 3.1]{CandidoSquares}.

\begin{theorem}\label{Thm:DerivativeInvariant}
Let $X$ and $Y$ be Banach spaces. If $X$ embeds into $Y$ with distortion at most $\lambda$, then
$\mathcal S_\Delta(X)$ embeds into $\mathcal S_\Delta(Y)$ with distortion at most $2\lambda$.

If $X$ and $Y$ are linearly isomorphic with distortion at most $\lambda$, then
$\mathcal S_\Delta(X)$ and $\mathcal S_\Delta(Y)$ are linearly isomorphic with distortion at most $\lambda$.
\end{theorem}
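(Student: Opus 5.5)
The plan is to use the operator $T^{**}$ and check that functoriality lets it descend to the quotients $\mathcal S_\Delta$. Let $T:X\to Y$ be an embedding with $\|T\|\,\|T^{-1}|_{T(X)}\|\le\lambda$; after rescaling we may assume $\|x\|\le\|Tx\|\le\lambda\|x\|$. By functoriality, $T^{**}(\Delta(X))\subset\Delta(Y)$. Since $T^{**}$ extends $T$, we also have $T^{**}(X)=T(X)\subset Y$. Hence the map
\[
\widehat T:\mathcal S_\Delta(X)\to\mathcal S_\Delta(Y),\qquad \widehat T([\Phi])=[T^{**}\Phi],
\]
is well defined and linear, with $\|\widehat T\|\le\|T^{**}\|=\|T\|\le\lambda$.

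The main obstacle is the lower estimate. Here I would use the standard fact that $T^{**}$ is again an embedding of $X^{**}$ into $Y^{**}$ with the same constants, since $T^*$ is a quotient map up to the factor, so $\|T^{**}\Phi\|\ge\|\Phi\|$. We also need $T^{**}$ to control the distance to $Y$, not just to $T(X)$.

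Fix $\Phi\in\Delta(X)$ and $y\in Y$, and aim to show $\|T^{**}\Phi-y\|\ge\tfrac12\operatorname{dist}(\Phi,X)$. The key observation is that $(T^{**})^{-1}(Y)=X$. Indeed, if $T^{**}\Phi\in Y$, then $T^{**}\Phi$ lies in $Y\cap T^{**}(X^{**})$. This set equals $T(X)$, because $T^{**}(X^{**})$ is the weak$^*$ closure of $T(X)$, which meets $Y$ exactly in the norm-closed subspace $T(X)$ (the weak$^*$ closure of a closed subspace $Z$ intersected with $Y$ is $Z$, by Hahn--Banach). Quantitatively, I would use $\operatorname{dist}(T^{**}\Phi,Y)\ge\tfrac12\operatorname{dist}(T^{**}\Phi,T(X))$. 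To prove this, pick $y$ with $\|T^{**}\Phi-y\|<d+\varepsilon$, where $d=\operatorname{dist}(T^{**}\Phi,Y)$. By Hahn--Banach/Goldstine-type arguments, or by the identification $T(X)^{\perp\perp}\cap Y=T(X)$ together with the isometry $Y^{**}/Z^{\perp\perp}\cong (Y/Z)^{**}$ for $Z=T(X)$, we get $\operatorname{dist}(y,T(X))=\|[y]\|_{Y/Z}=\|[y]\|_{(Y/Z)^{**}}\le\|[y]-[T^{**}\Phi]\|\le d+\varepsilon$, since $[T^{**}\Phi]=0$ in $Y^{**}/Z^{\perp\perp}$. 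Hence $\operatorname{dist}(T^{**}\Phi,T(X))\le 2d+2\varepsilon$. Finally, $\operatorname{dist}(T^{**}\Phi,T(X))\ge\operatorname{dist}(\Phi,X)$ by the lower bound on $T^{**}$. So $\|\widehat T[\Phi]\|\ge\tfrac12\|[\Phi]\|$, and the distortion of $\widehat T$ is at most $2\lambda$.

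For the isomorphism case, $T$ is onto, so $T(X)=Y$. The map $\widehat T$ then has inverse $\widehat{T^{-1}}$: this follows by applying the construction to $T^{-1}$, using functoriality again and $(T^{-1})^{**}=(T^{**})^{-1}$. Therefore $\|\widehat T\|\,\|\widehat T^{-1}\|\le\|T\|\,\|T^{-1}\|\le\lambda$, with no loss of the factor $2$, since the factor arose only from the distance to $Y$ versus to $T(X)$.
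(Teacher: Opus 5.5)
Your proof is correct and follows essentially the same route as the paper. Both arguments use the induced map $[\Phi]\mapsto[T^{**}\Phi]$, functoriality for well-definedness, the lower bound $\|T^{**}\Phi-Tx\|\ge\|\Phi-x\|$, and a Hahn--Banach step giving $\operatorname{dist}(y,T(X))\le\|T^{**}\Phi-y\|$; this step is where the factor $2$ comes from, and the isomorphism case is handled the same way via $\widehat{T^{-1}}$. The only difference is packaging: you derive that Hahn--Banach step directly through $(Y/Z)^{**}\cong Y^{**}/Z^{\perp\perp}$, while the paper picks a norming functional in $T(X)^{\perp}$ inside a proof by contradiction.
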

\begin{proof}
Let $T:X\to Y$ be a linear embedding. We first observe that $T^{**}\bigl(\Delta(X)\bigr)\subset \Delta(Y)$,
which follows from the defining property of the bidual assignment.

Define $Q:\Delta(X)\to \mathcal S_\Delta(Y)$ by $Q(x^{**})=[T^{**}x^{**}]$. Then $Q$ is a well-defined bounded linear operator and
$\|Q\|\leq \|T\|$. Moreover, since $T^{**}(X)\subset Y$, we have $X\subset \ker Q$.

We claim that $\ker Q=X$. Indeed, suppose that $Q(x^{**})=0$. Then
$T^{**}x^{**}\in Y$. Since $T^{**}x^{**}\in T(X)^{**}$, it follows that
$T^{**}x^{**}\in T(X)$. Thus there exists $x\in X$ such that $T^{**}x^{**}=T x$. Since $T^{**}$ is injective, we get $x^{**}=x$. Hence $\ker Q=X$.

Therefore $Q$ induces an injective bounded linear operator $\widehat Q:\mathcal S_\Delta(X)\to \mathcal S_\Delta(Y)$, $\widehat Q([x^{**}])=[T^{**}x^{**}]$ with $\|\widehat Q\|\leq \|T\|$.

If $T$ is surjective, the same construction applied to $T^{-1}:Y\to X$ gives the inverse operator of $\widehat Q$, with $\|\widehat Q^{-1}\|\leq \|T^{-1}\|$. Then $\|\widehat Q\|\|\widehat Q^{-1}\| \leq \|T\|\|T^{-1}\|$.

If $T$ is not surjective, suppose that there is $[x^{**}] \in \mathcal{S}_\Delta(X)$ such that $\|[x^{**}]\|\geq \frac{3\|T^{-1}\|}{2}$ and $\|\widehat{Q}([x^{**}])\|<\frac{3}{4}$.
Let $y\in Y$ be such that $\|T^{**}(x^{**})-y\|<\frac{3}{4}$. For each $x\in X$,
\small
\begin{align*}
\frac{3}{2} &\leq \frac{1}{\|T^{-1}\|}\|x^{**}-x\|\leq \|T^{**}(x^{**})-T(x)\|\leq \|T^{**}(x^{**})-y\|+\|y-T(x)\|.
\end{align*}
\normalsize
Taking the infimum over $x\in X$, we obtain $\operatorname{dist}(y,T(X))\geq \frac{3}{4}$. By the Hahn--Banach theorem, there is 
$y^*$ in the unit sphere of $Y^*$ such that $y^*(y) \geq \frac{3}{4}$ and $T^*(y^*)=0$. Thus
\begin{align*}
\frac{3}{4}\leq |y^*(y)|=|(T^{**}(x^{**})-y)(y^*)|\leq \|T^{**}(x^{**})-y\|<\frac{3}{4}
\end{align*}
which is a contradiction. Therefore, for each $[x^{**}]\in \mathcal{S}_\Delta(X)$, we have
\[\frac{1}{2\|T^{-1}\|}\|[x^{**}]\|\leq \|\widehat{Q}([x^{**}])\|\leq \|T\|\|[x^{**}]\|.\]

Therefore, $\widehat{Q}$ is a linear embedding with distortion $\|\widehat{Q}\|\|\widehat{Q}^{-1}\|\leq 2\|T\|\|T^{-1}\|$.
\end{proof}

\begin{remark}
It does not follow from the definition of a bidual assignment $\Delta$
that the corresponding derivative $\mathcal S_\Delta$ respects surjections.
Indeed, let $X$ and $Y$ be Banach spaces and let $T:X\to Y$ be a bounded
linear surjection. Although functoriality yields a well-defined operator
$\widehat T:\mathcal S_\Delta(X)\to\mathcal S_\Delta(Y)$ by the formula $T([x^{**}])=[T^{**}x^{**}]$,
this operator is surjective if and only if
\[
\Delta(Y)\subseteq T^{**}[\Delta(X)]+Y.
\]
Thus, in general, preserving quotients requires more than the functoriality of
$\Delta$. See Proposition \ref{Prop:l1quotients}.
\end{remark}

\begin{theorem}\label{Thm:ell_psum_Delta}
Let $(X_\alpha)_{\alpha<\xi}$ be a family of Banach spaces, let $E$ be a Banach sequence space whose finitely supported vectors are norm-dense, and set $X=\left(\bigoplus_{\alpha<\xi}X_\alpha\right)_E$. Assume that $X^{**}=\left(\bigoplus_{\alpha<\xi}X_\alpha^{**}\right)_E$.
Then, for every bidual assignment $\Delta$, one has $\Delta(X)=\left(\bigoplus_{\alpha<\xi}\Delta(X_\alpha)\right)_E$.
\end{theorem}
\begin{proof}
We first show that $\Delta(X)\subset \left(\bigoplus_{\alpha<\xi}\Delta(X_\alpha)\right)_E$. Let $F\in \Delta(X)$. Identifying $X^{**}=\left(\bigoplus_{\alpha<\xi}X_\alpha^{**}\right)_E$, we may write $F=(x_\alpha^{**})_\alpha$. For each $\beta<\xi$, let $P_\beta:X\to X_\beta$ denote the canonical projection. Then $P_\beta^{**}:X^{**}\to X_\beta^{**}$ satisfies $P_\beta^{**}(\Delta(X))\subset \Delta(X_\beta)$ by functoriality. Hence $x_\beta^{**}=P_\beta^{**}(F)\in \Delta(X_\beta)$, which proves the inclusion.

To prove the reverse inclusion, let $F=(x_\alpha^{**})_\alpha \in \left(\bigoplus_{\alpha<\xi}\Delta(X_\alpha)\right)_E$. For each finite subset $A\subset \xi$, define
\[
F_A=\sum_{\alpha\in A} I_\alpha^{**}(x_\alpha^{**}),
\]
where $I_\alpha:X_\alpha\to X$ is the canonical inclusion. By functoriality, $I_\alpha^{**}(\Delta(X_\alpha))\subset \Delta(X)$, hence $F_A\in \Delta(X)$. Since finitely supported vectors are dense in $E$, the net $(F_A)_{A\in [\xi]^{<\omega}}$, where $[\xi]^{<\omega}$ is ordered by direct inclusion, converges in norm to $F$, and since $\Delta(X)$ is norm-closed, it follows that $F\in \Delta(X)$.
\end{proof}

\begin{remark}
The previous theorem shows that bidual assignments commute with arbitrary
$\ell_p$-sums, $1<p<\infty$. More generally, the result applies to every
Banach sequence space $E$ whose finitely supported vectors are norm-dense and
for which $\left(\bigoplus_{\alpha<\xi}X_\alpha\right)_E^{**}=\left(\bigoplus_{\alpha<\xi}X_\alpha^{**}\right)_E$.
Many classical sequence spaces satisfy these assumptions, including suitable
reflexive Orlicz and Lorentz sequence spaces.

The argument used in Theorem~\ref{Thm:ell_psum_Delta} can also be adapted to $c_0$-sums, see \cite[Lemma 3.10]{CandidoSquares}; however, additional hypotheses are required. In particular, if $\Delta$ satisfies the Semadeni condition, then $\Delta\!\left(\left(\bigoplus_{\alpha<\xi}X_\alpha\right)_{c_0}\right)=\left(\bigoplus_{\alpha<\xi}\Delta(X_\alpha)\right)_{c_0}$.
\end{remark}

For $\ell_1$-sums, the preservation of sums also depends on additional assumptions on the bidual assignment, as well 
as on the cardinality of the index set, as we show next. The arguments in the proof were inspired by \cite[Theorem 4.2]{Leung}. 
We recall that a cardinal number $\mathfrak m$ is said to be
\emph{real-valued measurable} if there exists a nonzero countably additive
measure defined on $2^{\mathfrak m}$ that vanishes on singletons; see
\cite[Chapter~5]{Jech}. The existence of real-valued measurable cardinals
cannot be proved in \textnormal{ZFC} and is independent of the axioms of
set theory.
\begin{theorem}\label{Thm:ell_1sum_Delta}
Let $\xi$ be a non-real-valued-measurable cardinal and let
$(X_\alpha)_{\alpha<\xi}$ be a family of Banach spaces. Let $\Delta$ be a bidual assignment satisfying the Semadeni condition. If $X = \left(\bigoplus_{\alpha<\xi}X_\alpha\right)_{\ell_1}$, then $\Delta(X)=\left(\bigoplus_{\alpha<\xi}\Delta(X_\alpha)\right)_{\ell_1}$.
\end{theorem}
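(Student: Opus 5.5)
The easy inclusion comes first; for the reverse we subtract the ``diagonal part'' of $\Phi\in\Delta(X)$ and show the remainder vanishes, using a countably additive set function on $2^{\xi}$ and the hypothesis that $\xi$ is not real-valued measurable.

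Throughout, $X^*=\bigl(\bigoplus_{\alpha<\xi}X_\alpha^*\bigr)_{\ell_\infty}$. The space $\bigl(\bigoplus_{\alpha<\xi}X_\alpha^{**}\bigr)_{\ell_1}$ sits isometrically inside $X^{**}$ via $(x_\alpha^{**})_\alpha\mapsto\sum_\alpha I_\alpha^{**}x_\alpha^{**}$, where $I_\alpha:X_\alpha\to X$ is the inclusion. For $A\subset\xi$ and $g=(g_\alpha)_\alpha\in X^*$, write $g\mathds 1_A$ for the functional with coordinates $g_\alpha$ for $\alpha\in A$ and $0$ otherwise.

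\emph{Inclusion $\supseteq$.} This is exactly as in Theorem~\ref{Thm:ell_psum_Delta}. Functoriality gives $I_\alpha^{**}(\Delta(X_\alpha))\subset\Delta(X)$. For $(x^{**}_\alpha)_\alpha$ in the $\ell_1$-sum, the finite partial sums converge in norm, because the $\ell_1$-norm of the tail tends to $0$. Since $\Delta(X)$ is closed, the limit lies in $\Delta(X)$.

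\emph{Inclusion $\subseteq$, first step.} Let $\Phi\in\Delta(X)$ and set $x^{**}_\alpha=P_\alpha^{**}\Phi\in\Delta(X_\alpha)$, using functoriality for the coordinate projection $P_\alpha$. Fix a finite $F\subset\xi$ and $\varepsilon>0$. Choose $f_\alpha\in B_{X_\alpha^*}$ with $x^{**}_\alpha(f_\alpha)\ge\|x_\alpha^{**}\|-\varepsilon/|F|$, and let $f\in B_{X^*}$ be supported on $F$ with these coordinates. Then $f=\sum_{\alpha\in F}P_\alpha^*f_\alpha$, so
\[
\Phi(f)=\sum_{\alpha\in F}x^{**}_\alpha(f_\alpha),
\qquad\text{hence}\qquad
\sum_{\alpha\in F}\|x_\alpha^{**}\|\le\|\Phi\|+\varepsilon .
\]
Thus $(x_\alpha^{**})_\alpha$ belongs to $\bigl(\bigoplus\Delta(X_\alpha)\bigr)_{\ell_1}$. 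By the first inclusion, $\Psi:=\Phi-\sum_\alpha I_\alpha^{**}x^{**}_\alpha$ lies in $\Delta(X)$, and $P_\alpha^{**}\Psi=0$ for every $\alpha$. It remains to show $\Psi=0$.

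\emph{Main step: $\Psi=0$.} Fix $g\in X^*$ and define $\nu(A)=\Psi(g\mathds 1_A)$ for $A\subset\xi$. This set function is finitely additive and bounded by $\|\Psi\|\|g\|$. It vanishes on finite sets, because $g\mathds 1_{\{\alpha\}}=P_\alpha^*g_\alpha$ and $P_\alpha^{**}\Psi=0$.

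For countable additivity, take $C_n\subset\xi$ decreasing with empty intersection. For each $x\in X$ we have $|(g\mathds 1_{C_n})(x)|\le\|g\|\,\|x\mathds 1_{C_n}\|_{\ell_1}\to0$, since $x$ has countable support and summable norms. So $(g\mathds 1_{C_n})_n$ is weak$^*$-null. The Semadeni condition (Definition~\ref{Def:SemadeniCondition}) then gives $\nu(C_n)\to0$.

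Hence $\nu$ is a bounded countably additive signed measure on $2^\xi$ vanishing on singletons. The positive and negative parts of its Jordan decomposition are nonnegative measures of the same kind. If either were nonzero, it would witness that some cardinal $\le\xi$ is real-valued measurable, and pushing the measure forward to $\xi$ would make $\xi$ itself real-valued measurable, which is excluded. So $\nu\equiv0$, in particular $\Psi(g)=\nu(\xi)=0$. Since $g$ was arbitrary, $\Psi=0$.

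I expect the main obstacle to be this last step. The key points are verifying that the Semadeni condition really delivers countable additivity, via weak$^*$-nullity of the tail functionals, and that vanishing on singletons holds for $\Psi$ rather than for $\Phi$. This is why the diagonal part of $\Phi$ is subtracted first.
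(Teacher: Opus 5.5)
Your proof is correct and follows essentially the same route as the paper. You extract the coordinates $P_\alpha^{**}\Phi\in\Delta(X_\alpha)$, bound their $\ell_1$-norm by testing $\Phi$ on finitely supported functionals, and then kill the remainder with the set function $A\mapsto(\Phi-G)(g\mathbf 1_A)$. Its countable additivity comes from the Semadeni condition and the weak$^*$-nullity of $g\mathbf 1_{C_n}$, it vanishes on singletons, and so it must vanish since $\xi$ is not real-valued measurable.

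The differences are minor. You apply the Semadeni condition directly to $\Psi\in\Delta(X)$, using the inclusion you proved first, whereas the paper applies it to $\Phi$ and handles the diagonal part through summability. Your Jordan decomposition makes explicit the passage from a signed measure to the paper's definition of real-valued measurability. Under that definition the push-forward detour is unnecessary, since a nonzero Jordan part on $2^\xi$ already witnesses that $\xi$ is real-valued measurable.
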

\begin{proof}
We first prove that $\Delta(X)\subset \left(\bigoplus_{\alpha<\xi}\Delta(X_\alpha)\right)_{\ell_1}$. Let $F\in \Delta(X)$ be arbitrary. For each $\alpha<\xi$, let $I_\alpha:X_\alpha^*\to X^*$ be the canonical inclusion and define $f_\alpha=I_\alpha^*(F)\in X_\alpha^{**}$. By the functorial property of $\Delta$, we have $f_\alpha\in \Delta(X_\alpha)$. We claim that $G=(f_\alpha)_\alpha\in
\left(\bigoplus_{\alpha<\xi}\Delta(X_\alpha)\right)_{\ell_1}$.
Indeed, if $A\subset\xi$ is finite and nonempty and $\varepsilon>0$, for each $\alpha\in A$ choose
$x_\alpha^*\in B_{X_\alpha^*}$ such that
\[
\|f_\alpha\|\leq f_\alpha(x_\alpha^*)+\frac{\varepsilon}{|A|}.
\]
Putting $u=\sum_{\alpha\in A}I_\alpha(x_\alpha^*)$, we have $\|u\|\leq 1$, and hence
\[
\sum_{\alpha\in A}\|f_\alpha\|
\leq F(u)+\varepsilon
\leq \|F\|+\varepsilon.
\]
Therefore $\|G\|=\sum_{\alpha<\xi}\|f_\alpha\|\leq \|F\|$, which establishes our claim. 

Next, let $u=(x_\alpha^*)_\alpha\in \left(\bigoplus_{\alpha<\xi}X_\alpha^*\right)_{\ell_\infty}$ be arbitrary and define $m:2^\xi\to X^*$ by $m(A)=u_A$,
where
\[
u_A=
\begin{cases}
x_\alpha^*, & \alpha\in A,\\
0, & \alpha\notin A.
\end{cases}
\]
It is clear that $m$ is bounded and finitely additive. To check that $m$ is $w^*$-continuous at the empty set, let $(A_n)_n$ be a decreasing sequence in $2^{\xi}$ such that $\bigcap_{n \in \mathbb{N}}A_n=\emptyset$. Let $x=(x_\alpha)_\alpha\in X$ be arbitrary. For each $n\in \mathbb{N}$ we have
\begin{align*}
|m(A_n)(x)|
=
\left|\sum_{\alpha\in A_n} x_{\alpha}^*(x_\alpha)\right|
\leq 
\left(\sup_{\alpha<\xi}\|x_\alpha^*\|\right)
\sum_{\alpha\in A_n}\|x_\alpha\|.
\end{align*}
Since $(A_n)_n$ decreases to the empty set and $(\|x_\alpha\|)_\alpha\in \ell_1(\xi)$, the last term tends to $0$ as $n\to\infty$. Hence $m(A_n)\xrightarrow{w^*}0$.

Now since $\Delta$ satisfies the Semadeni condition and $F\in\Delta(X)$, we have that $F(m(A_n))\to 0$. Hence the scalar set function $A\mapsto F(m(A))$
is countably additive. Therefore, as $\sum_{\alpha<\xi}\|f_\alpha\|<\infty$, the function $\mu:2^{\xi}\to \mathbb{R}$ defined by
\[
\mu(A)=(F-G)(m(A))
\]
is countably additive. Moreover, for each $\beta<\xi$ holds
\[
\mu(\{\beta\})=F(I_\beta(x_\beta^*))-f_\beta(x_\beta^*)=f_\beta(x_\beta^*)-f_\beta(x_\beta^*)=0.
\]
Thus $\mu$ is atomless. Since $\xi$ is not real-valued measurable, $\mu=0$. Hence $(F-G)(u)=0$ for every $u\in X^*$, and therefore $F=G$.

To prove the reverse inclusion, let $F=(x^{**}_\alpha)_\alpha\in \left(\bigoplus_{\alpha<\xi}\Delta(X_\alpha)\right)_{\ell_1}$,
canonically identified as a subspace of $X^{**}$. For each finite set
$A\subset \xi$, put
\[
F_A=\sum_{\alpha\in A} I_\alpha^{**}(x_\alpha^{**}).
\]
By functoriality of the bidual assignment, since $x_\alpha^{**}\in \Delta(X_\alpha)$, we have $I_\alpha^{**}(x_\alpha^{**})\in \Delta(X)$. Hence $F_A\in \Delta(X)$ for every finite $A\subset \xi$ and
\[
\|F-F_A\|
=
\sum_{\alpha\notin A}\|x_\alpha^{**}\|.
\]
It follows that the net $(F_A)_{A\in [\xi]^{<\omega}}$, where $[\xi]^{<\omega}$ is ordered by inclusion, converges in norm to $F$.
Since $\Delta(X)$ is norm closed, we conclude that $F\in \Delta(X)$.
\end{proof}

\begin{remark}
The hypothesis that $\Delta$ satisfies the Semadeni condition can be slightly weakened. Inspecting the proof of Theorem~\ref{Thm:ell_1sum_Delta}, it is enough to assume that for every Banach space $X$, every $x^{**}\in \Delta(X)$, and every bounded finitely additive set function $m:2^\Gamma\to X^*$ which is weak$^*$-continuous at the empty set, the scalar set function $\mu(A)=x^{**}(m(A))$
is countably additive. We chose to assume the Semadeni condition in the statement of
Theorem~\ref{Thm:ell_1sum_Delta} because it is a simpler and more natural
assumption on a bidual assignment, and it is consistent with the framework
developed in the next section.
\end{remark}

The next lemma follows the same argument as in \cite[Lemma 3.9]{CandidoSquares} for the $c_0$ (supremum norm) case, and is adapted here to the $\ell_p$ norm.

\begin{lemma}\label{Infmax}
Let $X_1,\ldots,X_n$ be Banach spaces and let $Y_k\subset X_k$ be a closed subspace for each $k = 1,\ldots, n$.
Given $1\leq p <\infty$, let $X = X_1 \oplus_p \cdots \oplus_p X_n$ and $Y = Y_1\oplus_p \cdots \oplus_p Y_n$. Then for every $f\in X$ one has
\[
\inf_{g\in Y}\left(\sum_{k=1}^n\|f(k)-g(k)\|^p\right)^{\frac{1}{p}}
=
\left( \sum_{k=1}^n \inf_{x\in Y_k}\|f(k)-x\|^p\right)^{\frac{1}{p}}.
\]
\end{lemma}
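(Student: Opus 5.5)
The argument is a direct two-sided inequality in which the infimum over $g\in Y$ decouples coordinatewise. For $f\in X$, write $d_k=\inf_{x\in Y_k}\|f(k)-x\|$, the distance of $f(k)$ to $Y_k$ in $X_k$. The right-hand side of the claimed identity is then $\bigl(\sum_{k=1}^n d_k^p\bigr)^{1/p}$, and the left-hand side is the infimum of $\bigl(\sum_k\|f(k)-g(k)\|^p\bigr)^{1/p}$ over all $g\in Y$.

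For the inequality ``$\geq$'', take any $g\in Y$. Then $g(k)\in Y_k$ for each $k$, so $\|f(k)-g(k)\|\geq d_k$. Since $t\mapsto t^p$ is increasing on $[0,\infty)$, summing gives $\sum_k\|f(k)-g(k)\|^p\geq\sum_k d_k^p$. Taking $p$-th roots and then the infimum over $g$ gives the lower bound.

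For the inequality ``$\leq$'', fix $\varepsilon>0$. For each $k$ choose $y_k\in Y_k$ with $\|f(k)-y_k\|\leq d_k+\varepsilon$, and set $g=(y_1,\dots,y_n)\in Y$; this is legitimate because $Y$ is precisely the $\ell_p$-sum of the $Y_k$, so every tuple of this kind lies in $Y$. By Minkowski's inequality in $\mathbb R^n$,
\[
\Bigl(\sum_{k=1}^n\|f(k)-g(k)\|^p\Bigr)^{1/p}\leq\Bigl(\sum_{k=1}^n (d_k+\varepsilon)^p\Bigr)^{1/p}\leq\Bigl(\sum_{k=1}^n d_k^p\Bigr)^{1/p}+n^{1/p}\varepsilon .
\]
Letting $\varepsilon\to0$ gives the upper bound.

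There is no real obstacle here. The only point to keep in mind is that the coordinatewise choice of near-minimizers is possible because the sum is finite and $Y$ is the full product of the $Y_k$. Finiteness of $n$ is what makes the error term $n^{1/p}\varepsilon$ vanish in the limit.
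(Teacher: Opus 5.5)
Your proposal is correct and follows essentially the same route as the paper: a coordinatewise lower bound, then coordinatewise near-minimizers assembled into $g\in Y$. The only cosmetic difference is that the paper controls the error as $\varepsilon^p/n$ on $p$-th powers, whereas you use Minkowski's inequality to get the additive error $n^{1/p}\varepsilon$; both work equally well.
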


\begin{proof}
Given $f\in X$, for each $1\leq k \leq n$ and $g\in Y$, we have  
\[
\inf_{x\in Y_k}\|f(k)-x\|
\leq
\|f(k)-g(k)\|.
\]
Raising to the $p$-th power and summing over $k$, we obtain
\[
\left(\sum_{k=1}^n\inf_{x\in Y_k}\|f(k)-x\|^p\right)^{\frac{1}{p}}
\leq
\left(\sum_{k=1}^n\|f(k)-g(k)\|^p\right)^{\frac{1}{p}}.
\]
Taking the infimum over $g\in Y$ yields
\[
\left(\sum_{k=1}^n\inf_{x\in Y_k}\|f(k)-x\|^p\right)^{\frac{1}{p}}
\leq
\inf_{g\in Y}\left(\sum_{k=1}^n\|f(k)-g(k)\|^p\right)^{\frac{1}{p}}.
\]

For the opposite inequality, let $\varepsilon>0$ be arbitrary. For each $1\leq k \leq n$, choose $x_k\in Y_k$ such that
\[
\|f(k)-x_k\|^p
<
\inf_{x\in Y_k}\|f(k)-x\|^p+\frac{\varepsilon^p}{n}.
\]
Define $g\in Y$ by $g(k)=x_k$. Then
\[
\sum_{k=1}^n\|f(k)-g(k)\|^p
<
\sum_{k=1}^n\inf_{x\in Y_k}\|f(k)-x\|^p + \varepsilon^p,
\]
and hence
\[
\left(\sum_{k=1}^n\|f(k)-g(k)\|^p\right)^{\frac{1}{p}}
\leq
\left(\sum_{k=1}^n\inf_{x\in Y_k}\|f(k)-x\|^p\right)^{\frac{1}{p}}+\varepsilon.
\]
Taking the infimum over $g\in Y$ and letting $\varepsilon\to 0$, we obtain the reverse inequality.
\end{proof}

For notational convenience, we adopt the convention $\ell_0=c_0$. This allows us to state the next result in a uniform way.

\begin{theorem}\label{Thm:IsometryDerivative}
Assume that $p = 0$ or $1 < p < \infty$, and let $\{X_\alpha : \alpha < \xi\}$ be a family of Banach spaces. Define $X = \left(\bigoplus_{\alpha < \xi} X_\alpha\right)_{\ell_p}$. Assume that $\Delta(X)=\left(\bigoplus_{\alpha < \xi}\Delta(X_\alpha)\right)_{\ell_p}$. Then $\mathcal{S}_{\Delta}(X)$ is isometrically isomorphic to $\left( \bigoplus_{\alpha < \xi} \mathcal{S}_{\Delta}(X_\alpha) \right)_{\ell_p}$.
Under the same hypotheses, but for $p = 1$, the same result holds provided that $\xi$ is not a real-valued measurable cardinal.
\end{theorem}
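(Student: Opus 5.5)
The plan is to build the natural map coordinatewise and check it is an isometric bijection, using Lemma~\ref{Infmax} (and its $c_0$ analogue, \cite[Lemma 3.9]{CandidoSquares}) to compute quotient norms. By hypothesis every $F\in\Delta(X)$ is a family $(x^{**}_\alpha)_\alpha$ with $x^{**}_\alpha\in\Delta(X_\alpha)$ and $(\|x^{**}_\alpha\|)_\alpha\in\ell_p(\xi)$. Define
\[
\Psi:\Delta(X)\to\Bigl(\bigoplus_{\alpha<\xi}\mathcal S_\Delta(X_\alpha)\Bigr)_{\ell_p},\qquad \Psi\bigl((x^{**}_\alpha)_\alpha\bigr)=\bigl([x^{**}_\alpha]\bigr)_\alpha .
\]
Since $\|[x^{**}_\alpha]\|\le\|x^{**}_\alpha\|$, the image lies in the $\ell_p$-sum; for $p=0$ the coordinate norms tend to $0$ along $\xi$, so the quotient norms do as well. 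Thus $\Psi$ is a linear contraction.

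Next I compute the kernel and surjectivity. We have $\Psi(F)=0$ iff every $x^{**}_\alpha\in X_\alpha$. A family of elements $x_\alpha\in X_\alpha$ with norms in $\ell_p$ (resp.\ $c_0$) is exactly an element of $X$. So $\ker\Psi=X$, and $\Psi$ induces an injective contraction $\widehat\Psi:\mathcal S_\Delta(X)\to(\bigoplus\mathcal S_\Delta(X_\alpha))_{\ell_p}$. For surjectivity and isometry, fix a family $([x^{**}_\alpha])_\alpha$ in the target and $\varepsilon>0$. Choose representatives with $\|x^{**}_\alpha\|\le\|[x^{**}_\alpha]\|+\varepsilon_\alpha$, where $(\varepsilon_\alpha)$ is a family of positive numbers with small $\ell_p$-norm (for $p=0$, with $\varepsilon_\alpha\to0$). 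Then $(x^{**}_\alpha)_\alpha$ lies in the $\ell_p$-sum of the $\Delta(X_\alpha)$, hence in $\Delta(X)$ by hypothesis. Its norm exceeds the norm of the target family by at most $\varepsilon$ (Minkowski in $\ell_p$, or the sup for $c_0$). This gives surjectivity and $\|[F]\|\le\|\widehat\Psi[F]\|+\varepsilon$, so $\widehat\Psi$ is an isometric isomorphism.

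The step requiring care is the identification of $X$ inside $X^{**}$ with the appropriate sum: for $1<p<\infty$ and $p=0$ the quotient norm $\operatorname{dist}(F,X)$ must split coordinatewise. For infinitely many coordinates I will not invoke Lemma~\ref{Infmax} directly, since it is stated for finitely many. The argument above avoids this: it only needs $\|[F]\|\le\|F\|$ for the chosen representative, plus the contraction inequality. For the contraction inequality one needs $\|[x^{**}_\alpha]\|_\alpha$ bounded in $\ell_p$ by $\operatorname{dist}(F,X)$. This holds because for any $x\in X$, $\|F-x\|=\|(\|x^{**}_\alpha-x_\alpha\|)_\alpha\|_p\ge\|(\|[x^{**}_\alpha]\|)_\alpha\|_p$, and we then take the infimum over $x$.

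For $p=1$ the same argument works verbatim, with $X^{**}$ replaced by its relevant subspace. The key point is that $\Delta(X)$ is the $\ell_1$-sum of the $\Delta(X_\alpha)$ inside $X^{**}$, which is exactly what Theorem~\ref{Thm:ell_1sum_Delta} provides under the non-real-valued-measurability hypothesis. That hypothesis is where this case relies on it: it lets us write elements of $\Delta(X)$ as coordinate families with summable norms, so the norm computation above applies.
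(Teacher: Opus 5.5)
Your proof is correct. It differs from the paper's in how the lower bound $\|[F]\|\le\|\widehat\Psi([F])\|$ is obtained.

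\textbf{The two routes.} The paper truncates. For each finite $A\subset\xi$ it applies Lemma~\ref{Infmax} to the finitely supported $F_A$ to get $\|\widehat T([F_A])\|=\|[F_A]\|$. It then lets $F_A\to F$ in norm and uses continuity of both sides. It only asserts, without argument, that $T$ is surjective. You instead lift an arbitrary target family all at once by near-optimal representatives. In effect you prove the infinite-index version of Lemma~\ref{Infmax} directly. This gives surjectivity and the isometric lower bound together (via injectivity of $\widehat\Psi$), with no density or limiting argument. It also fills the surjectivity step the paper leaves implicit. Your contraction estimate, done coordinatewise over all of $\xi$ by monotonicity of the $\ell_p$ (or sup) norm, is likewise correct.

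\textbf{A detail to fix in the lifting.} When $\xi$ is uncountable, no family of strictly positive numbers $(\varepsilon_\alpha)_{\alpha<\xi}$ lies in $\ell_p(\xi)$ or $c_0(\xi)$, so your choice as literally stated is impossible. Write $y=(y_\alpha)_\alpha$ for the target element. It has countable support, since $\{\alpha:\|y_\alpha\|>0\}=\bigcup_n\{\alpha:\|y_\alpha\|>1/n\}$ and each of these sets is finite. So take $x^{**}_\alpha=0$ and $\varepsilon_\alpha=0$ off the support. On the support, use strictly positive $\varepsilon_\alpha$, which you need because the quotient norms need not be attained. For $1\le p<\infty$ choose them with $\sum_\alpha\varepsilon_\alpha^p<\varepsilon^p$. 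For $p=0$ take, e.g., $\varepsilon_\alpha=\min\{\varepsilon,\|y_\alpha\|\}$, so the lifted family still tends to zero.

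\textbf{The case $p=1$.} You misplace the role of non-measurability here. The identification $\Delta(X)=\bigl(\bigoplus_{\alpha<\xi}\Delta(X_\alpha)\bigr)_{\ell_1}$ is already among the hypotheses (``under the same hypotheses''). Invoking Theorem~\ref{Thm:ell_1sum_Delta} to obtain it would also require the Semadeni condition, which is not assumed here. With the identification taken as given, your argument, like the paper's, treats $p=1$ exactly as $1<p<\infty$. It never uses that $\xi$ is not real-valued measurable.
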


\begin{proof}
We follow the same lines as in \cite[Theorem 1.2]{CandidoSquares}, now for $1 \leq p < \infty$. The case $p = 0$ can be treated similarly, using the corresponding maximum norm version. We define a bounded linear surjection $T : \Delta(X) \to \left(\bigoplus_{\alpha < \xi} \mathcal{S}_{\Delta}(X_\alpha)\right)_{\ell_p}$ by the formula $T((x^{**}_\alpha)_\alpha) = ([x^{**}_\alpha])_\alpha$. It is clear that $\|T\| \leq 1$ and that $\ker(T) = X$. The induced operator $\widehat{T} : \mathcal{S}_{\Delta}(X) \to \left(\bigoplus_{\alpha < \xi} \mathcal{S}_{\Delta}(X_\alpha)\right)_{\ell_p}$ is an isomorphism with $\|\widehat{T}\| = \|T\| \leq 1$.

To check that $\widehat{T}$ is an isometry, let $F = (x^{**}_\alpha)_\alpha \in \Delta(X)$ be arbitrary. Consider the net $(F_A)_{A\in [\xi]^{<\omega}}$, where $F_A = (y^{**}_\alpha)_\alpha$ is defined by
\[
y^{**}_\alpha = \begin{cases}
x^{**}_\alpha & \text{if } \alpha \in A, \\
0 & \text{otherwise}.
\end{cases}
\]
and $[\xi]^{<\omega}$ is directed by inclusion. This net converges in norm to $F$. Then, for every $A \in [\xi]^{<\omega}$, by Lemma~\ref{Infmax}, we have
\begin{align*}
\|\widehat{T}([F_A])\|
&= \left( \sum_{\alpha \in A} \|[x_\alpha^{**}]\|^p \right)^{\frac{1}{p}}= \left( \sum_{\alpha \in A} \inf_{x \in X_\alpha} \|x_\alpha^{**} - x\|^p \right)^{\frac{1}{p}} \\
&= \inf_{(x_\alpha)_{\alpha} \in \left( \bigoplus_{\alpha \in A} X_\alpha \right)_{\ell_p}} \left( \sum_{\alpha \in A} \|x_\alpha^{**} - x_\alpha\|^p \right)^{\frac{1}{p}} = \inf_{g \in X} \|F_A - g\| = \|[F_A]\|.
\end{align*}
Taking limits, we obtain $\|\widehat{T}([F])\| \geq \|[F]\|$. Since the reverse inequality was already established, $\widehat{T}$ is an isometry.
\end{proof}

To conclude this section, we derive a limitation of the abstract theory developed above. Assuming that there are no real-valued measurable cardinals, every nontrivial derivative arising from a bidual assignment satisfying the Semadeni condition must fail to respect some quotient.

\begin{proposition}\label{Prop:l1quotients}
Assume that there are no real-valued measurable cardinals. Let $\Delta$ be a nontrivial bidual assignment satisfying the Semadeni condition. Then the derivative $\mathcal S_\Delta$ does not respect all quotients.
\end{proposition}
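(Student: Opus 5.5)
Since $\Delta$ is nontrivial, there is a Banach space $Y$ with $\Delta(Y)\neq Y$, so $\mathcal S_\Delta(Y)\neq\{0\}$. The plan is to realize $Y$ as a quotient of an $\ell_1$-sum whose derivative vanishes, and then show that the induced map $\widehat T$ of the preceding remark cannot be onto.

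Let $\xi$ be the density character of $Y$; it is not real-valued measurable by hypothesis. Take the canonical quotient map $T:\ell_1(\xi)\to Y$ sending the unit vectors onto a dense subset of $B_Y$. We view $\ell_1(\xi)=\left(\bigoplus_{\alpha<\xi}\mathbb R\right)_{\ell_1}$. Since $\mathbb R$ is finite dimensional, $\Delta(\mathbb R)=\mathbb R$. By Theorem~\ref{Thm:ell_1sum_Delta}, which applies because $\Delta$ satisfies the Semadeni condition and $\xi$ is not real-valued measurable,
\[
\Delta(\ell_1(\xi))=\left(\bigoplus_{\alpha<\xi}\Delta(\mathbb R)\right)_{\ell_1}=\ell_1(\xi).
\]
Hence $\mathcal S_\Delta(\ell_1(\xi))=\{0\}$.

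The induced operator $\widehat T:\mathcal S_\Delta(\ell_1(\xi))\to\mathcal S_\Delta(Y)$ therefore has zero range. By the criterion in the remark preceding Theorem~\ref{Thm:ell_psum_Delta}, it would be surjective only if
\[
\Delta(Y)\subset T^{**}[\Delta(\ell_1(\xi))]+Y=T(\ell_1(\xi))+Y=Y,
\]
which contradicts the choice of $Y$. So there are a quotient map $T$ and spaces for which $\mathcal S_\Delta(Y)$ is not a quotient of $\mathcal S_\Delta(\ell_1(\xi))$ via $\widehat T$. Indeed, $\mathcal S_\Delta(Y)$ is not a quotient of $\{0\}$ at all, so $\mathcal S_\Delta$ does not respect quotients.

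The only real step is checking that Theorem~\ref{Thm:ell_1sum_Delta} applies with scalar summands, together with the fact $\Delta(\mathbb R)=\mathbb R$, which holds because $\mathbb R^{**}=\mathbb R$. If one prefers to avoid density characters, any index set of non-real-valued-measurable cardinality admitting a quotient map onto $Y$ suffices, for instance $|B_Y|$. Under the standing assumption that there are no real-valued measurable cardinals, this choice is automatic.
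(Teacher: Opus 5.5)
Your proof is correct and follows the paper's argument. You take a space $Y$ with $\mathcal S_\Delta(Y)\neq\{0\}$, realize it as a quotient of $\ell_1(\xi)$, and use Theorem~\ref{Thm:ell_1sum_Delta} with scalar summands to get $\mathcal S_\Delta(\ell_1(\xi))=\{0\}$, so $\widehat T$ cannot be onto. The only difference is that you read off $\Delta(\ell_1(\xi))=\ell_1(\xi)$ directly from Theorem~\ref{Thm:ell_1sum_Delta}, while the paper passes through the derivative-level identification $\mathcal S_\Delta(\ell_1(\kappa))\cong(\bigoplus_{\alpha<\kappa}\mathcal S_\Delta(\mathbb R))_{\ell_1}$; your route is slightly more direct.
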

\begin{proof}
Since $\Delta$ is nontrivial, there exists a Banach space $X$ such that $\Delta(X)\neq X$.
Hence $\mathcal S_\Delta(X)\neq \{0\}$. Let $\kappa$ be an infinite cardinal such that there exists a bounded linear surjection
$T:\ell_1(\kappa)\twoheadrightarrow X$. Suppose that $\mathcal S_\Delta$ respects quotients. Then the induced operator
$\widehat T:
\mathcal S_\Delta(\ell_1(\kappa))
\twoheadrightarrow
\mathcal S_\Delta(X)$ is surjective.

On the other hand, $\ell_1(\kappa)=\left(\bigoplus_{\alpha<\kappa}\mathbb R\right)_{\ell_1}$.
Since $\Delta$ satisfy the Semadeni condition and there are no real-valued measurable cardinals, Theorems~\ref{Thm:ell_1sum_Delta} and \ref{Thm:DerivativeInvariant} yield  $\mathcal S_\Delta(\ell_1(\kappa))\cong \left(\bigoplus_{\alpha<\kappa}\mathcal S_\Delta(\mathbb R)\right)_{\ell_1}.$
Since $\mathbb R$ is reflexive, $\mathcal S_\Delta(\mathbb R)=\{0\}$, and therefore $\mathcal S_\Delta(\ell_1(\kappa))=\{0\}$. Thus 
\[\{0\}=\mathcal S_\Delta(\ell_1(\kappa))\twoheadrightarrow\mathcal S_\Delta(X),\] which is a contradiction.
\end{proof}
\section{Derivatives and Injective tensor products}
\label{Sec:Injective}

In this section, we investigate a particular class of bidual assignments and
their interaction with injective tensor products. The following notion
isolates the structure needed to define derivatives by testing bidual
functionals on suitably small subsets of the dual space.

\begin{definition}\label{Def:Test}
A \emph{testing ideal} is a rule $\mathfrak I$ which assigns to each Banach
space $X$ an ideal $\mathfrak I(X^*)$ of subsets of $X^*$ such that:
\begin{enumerate}
\item Every countable subset of $X^*$ belongs to $\mathfrak I(X^*)$.

\item Whenever $T:X\to Y$ is a bounded linear operator and $A\in\mathfrak I(Y^*)$,
we have $T^*(A)\in\mathfrak I(X^*)$.

\item Whenever $X$ and $Y$ are Banach spaces, $A\in\mathfrak I(\mathcal L(X,Y^*))$
and $y^{**}\in\Delta_{\mathfrak I}(Y)$, then $\{y^{**}\circ T:T\in A\}\in \mathfrak I(X^*)$.
\end{enumerate}
\end{definition}

\begin{remark}
It is worth noting that condition~(3) in the definition of a testing ideal
will not be needed in most of the results that follow. Its first essential
use occurs in Theorem~\ref{Thm:TensorEmbeddingIntersection}.
\end{remark}

\begin{definition}\label{Def:TestDelta}
Let $\mathfrak I$ be a testing ideal. For every Banach space $X$, define
\[
\Delta_{\mathfrak I}^0(X)
=
\left\{
x^{**}\in X^{**}:
\forall A\in\mathfrak I(X^*)\ \exists x\in X
\text{ such that }x^{**}|_A=x|_A
\right\}.
\]
The \emph{ideal-generated assignment} associated with $\mathfrak I$ is defined by $\Delta_{\mathfrak I}(X)=
\overline{\Delta_{\mathfrak I}^0(X)}^{\|\cdot\|}$.

The associated derivative of $X$ is denoted by $\mathcal S_{\mathfrak I}(X)=\Delta_{\mathfrak I}(X)/X$.
\end{definition}

\begin{remark}\label{Rem:ExamplesAssignments}
It is straightforward to verify that, for every infinite cardinal $\kappa$, the testing ideal
\[\mathfrak I_\kappa(X^*)=\{A\subset X^*:|A|\leq\kappa\}\]
satisfies Definition~\ref{Def:Test}. Moreover, for every uncountable cardinal $\kappa$, the rule
\[\mathfrak I_{<\kappa}(X^*)=\{A\subset X^*:|A|<\kappa\}\]
also defines a testing ideal.

The bidual assignment associated with $\mathfrak I_\kappa$
recovers the Korpalski assignment $\Delta_\kappa$ presented in the
previous section.
\end{remark}

\begin{proposition}\label{Prop:Bidual}
Let $\mathfrak I$ be a testing ideal. The ideal-generated assignment $\Delta_{\mathfrak I}$ is a bidual assignment in the sense of Definition \ref{Def:BidualAssignment}. Moreover,
$\Delta_{\mathfrak I}$ satisfies the Semadeni Condition (see Definition \ref{Def:SemadeniCondition})
\end{proposition}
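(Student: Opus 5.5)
We have to check three things: $\Delta_{\mathfrak I}(X)$ is a closed subspace with $X\subset\Delta_{\mathfrak I}(X)\subset X^{**}$, it is functorial, and it satisfies the Semadeni condition.

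\emph{Subspace and containment.} Each $x\in X$ trivially lies in $\Delta^0_{\mathfrak I}(X)$: for every $A$ take $x$ itself. The set $\Delta^0_{\mathfrak I}(X)$ is closed under scalar multiples. It is also closed under sums, and here we use that $\mathfrak I(X^*)$ is closed under subsets. Given $x^{**},y^{**}\in\Delta^0_{\mathfrak I}(X)$ and $A\in\mathfrak I(X^*)$, pick $x,y\in X$ agreeing with $x^{**}$ and $y^{**}$ on $A$. Then $x+y$ agrees with $x^{**}+y^{**}$ on $A$. So $\Delta^0_{\mathfrak I}(X)$ is a linear subspace, and its norm closure $\Delta_{\mathfrak I}(X)$ is a closed subspace containing $X$.

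\emph{Functoriality.} Let $T:X\to Y$ be bounded. First take $x^{**}\in\Delta^0_{\mathfrak I}(X)$ and $B\in\mathfrak I(Y^*)$. By condition (2) of Definition~\ref{Def:Test}, $A=T^*(B)\in\mathfrak I(X^*)$, so there is $x\in X$ with $x^{**}|_A=x|_A$. For $y^*\in B$ we get
\[
(T^{**}x^{**})(y^*)=x^{**}(T^*y^*)=(T^*y^*)(x)=y^*(Tx).
\]
Hence $T^{**}x^{**}$ agrees with $Tx\in Y$ on $B$, and so $T^{**}x^{**}\in\Delta^0_{\mathfrak I}(Y)$. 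Since $T^{**}$ is norm continuous, it maps the closure $\Delta_{\mathfrak I}(X)$ into the closure $\Delta_{\mathfrak I}(Y)$.

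\emph{Semadeni condition.} Let $(x_n^*)$ be weak$^*$-null in $X^*$. First take $\Phi\in\Delta^0_{\mathfrak I}(X)$. The set $A=\{x_n^*:n\in\mathbb N\}$ is countable, so by condition (1) it lies in $\mathfrak I(X^*)$. Choose $x\in X$ with $\Phi|_A=x|_A$. Then $\Phi(x_n^*)=x_n^*(x)\to0$.

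Now take a general $\Phi\in\Delta_{\mathfrak I}(X)$ and pass to the closure. Weak$^*$-null sequences are bounded by the uniform boundedness principle, say $\sup_n\|x_n^*\|\le M$. Given $\varepsilon>0$, pick $\Psi\in\Delta^0_{\mathfrak I}(X)$ with $\|\Phi-\Psi\|<\varepsilon$. Then
\[
\limsup_n|\Phi(x_n^*)|\le M\varepsilon+\limsup_n|\Psi(x_n^*)|=M\varepsilon.
\]
Letting $\varepsilon\to0$ gives $\Phi(x_n^*)\to0$.

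The only delicate point is this last approximation step. The uniform bound on the sequence is exactly what lets the property pass from $\Delta^0_{\mathfrak I}(X)$ to its closure. Condition (3) of Definition~\ref{Def:Test} is not needed anywhere in this proof.
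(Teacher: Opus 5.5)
Your proof is correct and follows the paper's argument: functoriality via condition (2) of Definition~\ref{Def:Test} plus norm continuity of $T^{**}$, and the Semadeni condition via condition (1) on $\Delta^0_{\mathfrak I}(X)$ followed by passage to the closure. Your uniform-boundedness estimate is exactly the norm-closedness of $\Delta_{\mathfrak s}(X)$ that the paper invokes without proof, and your check that $\Delta^0_{\mathfrak I}(X)$ is a linear subspace containing $X$ fills a detail the paper leaves implicit (although the sum step does not actually use closure of $\mathfrak I(X^*)$ under subsets, contrary to your remark).
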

\begin{proof}
We first check the functoriality. Let $T:X\to Y$ be a bounded linear operator, and let 
$x^{**}\in \Delta_{\mathfrak I}^0(X)$, and $A\in \mathfrak I(Y^*)$ be arbitrary. Since $\mathfrak I$ is a
testing ideal, we have $T^*(A)\in \mathfrak I(X^*)$.
Since $x^{**}\in \Delta_{\mathfrak I}^0(X)$, there exists
$x\in X$ such that $x^{**}|_{T^*(A)}=x|_{T^*(A)}$.

 Now, if $y^*\in A$, then
\[T^{**}(x^{**})(y^*)=x^{**}(T^*y^*)=x(T^*y^*)=y^*(Tx).\]
Therefore, $T^{**}(x^{**})|_A=(Tx)|_A$. Thus  $T^{**}(\Delta_{\mathfrak I}^0(X))\subset \Delta_{\mathfrak I}^0(Y)$. Now let $x^{**}\in \Delta_{\mathfrak I}(X)$.
By definition, there exists a sequence $(x_n^{**})_n$ in $\Delta_{\mathfrak I}^0(X)$
converging in norm to $x^{**}$. Since $T^{**}$ is norm continuous,
$(T^{**}(x_n^{**}))_n$ converges in norm to $T^{**}(x^{**})$.
Since $T^{**}(x_n^{**})\in \Delta_{\mathfrak I}^0(Y)$ for every $n$, we conclude that $T^{**}(x^{**})\in
\overline{\Delta_{\mathfrak I}^0(Y)}=\Delta_{\mathfrak I}(Y)$.

Finally, let $x^{**}\in \Delta_{\mathfrak I}^0(X)$ be arbitrary and let $(x_n^*)_n$ be a weak$^*$-null sequence in $X^*$. Since the set $A=\{x_n^*:n \in \mathbb{N}\}$ is countable, $A\in \mathfrak{I}(X^*)$, and there is $x\in X$ such that $x^{**}|_{A}=x|_{A}$. Therefore
\[
\lim_{n\to \infty}x^{**}(x_n^*)
=
\lim_{n\to \infty}x_n^*(x)
=
0.
\]
Thus $\Delta_{\mathfrak I}^0(X)\subset \Delta_{\mathfrak s}(X)$ where $\Delta_{\mathfrak s}$ denotes the Semadeni assignment introduced in the previous section. Since $\Delta_{\mathfrak s}(X)$ is norm closed, it follows that $\Delta_{\mathfrak I}(X)=\overline{\Delta_{\mathfrak I}^0(X)}\subset\Delta_{\mathfrak s}(X)$.
Hence $\Delta_{\mathfrak I}$ satisfies the Semadeni condition.
\end{proof}

We now turn to the interaction between ideal-generated derivatives and injective tensor products. We first determine conditions under which, for an ideal-generated assignment $\Delta_{\mathfrak I}$, the associated derivative commutes with the injective tensor product. More precisely, given Banach spaces $X$ and $Y$ such that $X$ is $\Delta_{\mathfrak I}$-Mazur, we seek conditions ensuring that
\[
\mathcal{S}_{\mathfrak I}(X\widehat{\otimes}_\varepsilon Y)
\sim
X\widehat{\otimes}_\varepsilon \mathcal{S}_{\mathfrak I}(Y).
\]

Related questions have been studied in \cite{CandidoSquares} in the context of the Semadeni derivative, and 
\cite{galego1,galego2,galego3,galego4}
in the context of the Kislyakov derivative and Banach spaces of the form
$C([0,\alpha])$, where $\alpha$ is an uncountable ordinal.

We begin by introducing some additional notation and recalling a standard identification. For Banach spaces $X$ and $Y$, we denote by $\mathcal K_{w^*}(X^*,Y)$ the space of compact operators from $X^*$ to $Y$ that are $w^*$--$w$ continuous. Every element of $X\widehat{\otimes}_{\varepsilon}Y$ can be canonically identified dentified with a compact $w^*$--$w$ continuous operator from $X^*$  into $Y$ through the formula
\[
(x\otimes y)(x^*)=x^*(x)y,
\]
yielding an isometric embedding $X\widehat{\otimes}_{\varepsilon}Y
\hookrightarrow \mathcal K_{w^*}(X^*,Y)$. If either $X$ or $Y$ has the approximation property, then this embedding is surjective and hence an isometric isomorphism.

We shall also need the following canonical construction. Consider the bilinear map $G:X^*\times Y^*\to (X\widehat{\otimes}_{\varepsilon}Y)^*$
defined on elementary tensors by the formula
\[
G(x^*,y^*)\left(\sum_{i=1}^n a_i\,(x_i\otimes y_i)\right)=
\sum_{i=1}^n a_i x^*(x_i)y^*(y_i).
\]
Since $\|G(x^*,y^*)\|\leq \|x^*\|\,\|y^*\|$, the universal property of the projective tensor product \cite[Theorem 2.9]{Ryan} yields a canonical mapping $\widehat G:X^*\widehat{\otimes}_{\pi}Y^*\to (X\widehat{\otimes}_{\varepsilon}Y)^*$ with $\|\widehat G\|\leq 1$. Although $\widehat G$ is, in general, neither an isomorphic embedding nor surjective, the subspace $\widehat G(X^*\otimes Y^*)$ is always weak$^*$-dense in $(X\widehat{\otimes}_{\varepsilon}Y)^*$.
Indeed, the annihilator of $\widehat G(X^*\otimes Y^*)$ is trivial, since the elementary functionals $x^*\otimes y^*$ separate points of
$X\widehat{\otimes}_{\varepsilon}Y$.

The proof of the next theorem is motivated by \cite[Theorem 2.2]{Leung} and \cite[Proposition 5.1]{Kapp}. We recall that a subset $A$ of a Banach space $X$ is \emph{limited} if every weak$^*$-null sequence in $X^*$ converges uniformly to $0$ on $A$. The Banach space $X$ has the \emph{Gelfand--Phillips property} if every limited subset of $X$ is relatively norm compact, see \cite[p.~286]{Fabian}. 

\begin{theorem}\label{Thm:DeltaInjectiveTensor}Let $X$ and $Y$ be Banach spaces. Let $\mathfrak I$ be a testing ideal and $\Delta_{\mathfrak I}$ the associated ideal-generated assignment. Assume that
\begin{itemize}
    \item[(a)] $\Delta_{\mathfrak I}(X)=X$;
    \item[(b)] $\widehat{G}(X^*\otimes Y^*)$ is weak$^*$-sequentially dense in $(X\widehat{\otimes}_\varepsilon Y)^*$;
    \item[(c)]  $X$ has the Gelfand--Phillips property.
\end{itemize}
Then the map $T:\Delta_{\mathfrak I}(X\widehat{\otimes}_\varepsilon Y) \to \mathcal K_{w^*}(X^*,\Delta_{\mathfrak I}(Y))$ defined by $T(\Phi)=L_\Phi$, where
\[
L_\Phi(x^*)(y^*)=\Phi(\widehat{G}(x^*\otimes y^*)),
\]
is a well-defined linear isometric embedding such that $T(u)=u$ for every
$u\in X\widehat{\otimes}_\varepsilon Y$.

If, in addition, either $X$ or $\Delta_{\mathfrak{I}}(Y)$ has (AP), then $T$ is an isometric
isomorphism. Consequently,
\[
\Delta_{\mathfrak I}(X\widehat{\otimes}_\varepsilon Y)
\cong 
X\widehat{\otimes}_\varepsilon \Delta_{\mathfrak I}(Y).
\]
\end{theorem}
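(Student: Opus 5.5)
The proof will follow the pattern of \cite[Theorem 2.2]{Leung}: analyse $L_\Phi$ first for $\Phi\in\Delta^0_{\mathfrak I}(X\widehat{\otimes}_\varepsilon Y)$ and then pass to norm limits. Fix such a $\Phi$ and $x^*\in X^*$. The functional $y^*\mapsto \Phi(\widehat G(x^*\otimes y^*))$ is $\Phi\circ R_{x^*}^{**}$-type: let $R_{x^*}:X\widehat{\otimes}_\varepsilon Y\to Y$ be the operator $u\mapsto (x^*\otimes \mathrm{id})(u)$, i.e.\ $u\mapsto u(x^*)$ under the identification with $\mathcal K_{w^*}(X^*,Y)$. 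Then $R_{x^*}^*(y^*)=\widehat G(x^*\otimes y^*)$, so $L_\Phi(x^*)=R_{x^*}^{**}(\Phi)$. Functoriality (Proposition~\ref{Prop:Bidual}) gives $L_\Phi(x^*)\in\Delta_{\mathfrak I}(Y)$, and this already holds for every $\Phi\in\Delta_{\mathfrak I}(X\widehat{\otimes}_\varepsilon Y)$. Linearity of $\Phi\mapsto L_\Phi$ is clear, and $\|L_\Phi\|\le\|\Phi\|$ since $\|\widehat G\|\le1$. For $u\in X\widehat{\otimes}_\varepsilon Y$ one checks directly on elementary tensors that $L_u=u$.

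The heart of the argument is to show that $L_\Phi$ is compact and $w^*$--$w$ continuous, i.e.\ that it lies in $\mathcal K_{w^*}(X^*,\Delta_{\mathfrak I}(Y))$. Here I would use the adjoint. For $y^{***}\in \Delta_{\mathfrak I}(Y)^*$, or just for $y^*\in Y^*$, consider $x^*\mapsto \Phi(\widehat G(x^*\otimes y^*))$. This is $S_{y^*}^{**}(\Phi)$, where $S_{y^*}:X\widehat{\otimes}_\varepsilon Y\to X$ is $u\mapsto u^*(y^*)$. By functoriality and (a), this functional belongs to $\Delta_{\mathfrak I}(X)=X$, so $L_\Phi^*$ maps $Y^*$ into $X$. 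Compactness is where (b) and (c) enter. I would show that $L_\Phi^*(B_{Y^*})=\{S_{y^*}^{**}\Phi: y^*\in B_{Y^*}\}\subset X$ is limited, and then conclude from the Gelfand--Phillips property that it is relatively compact. To see that it is limited, take a $w^*$-null sequence $(x_n^*)$ in $X^*$ and suppose, towards a contradiction, that there are $y_n^*\in B_{Y^*}$ with $|\Phi(\widehat G(x_n^*\otimes y_n^*))|\ge\delta$. The sequence $\widehat G(x_n^*\otimes y_n^*)$ is $w^*$-null in $(X\widehat{\otimes}_\varepsilon Y)^*$, since $|\langle u,\widehat G(x_n^*\otimes y_n^*)\rangle|\le\|u(x_n^*)\|\to0$ by compactness and $w^*$--$w$ continuity of $u$. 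The Semadeni condition then forces $\Phi$ of it to tend to $0$, which is the contradiction. (For $\Phi\in\Delta_{\mathfrak I}$ rather than $\Delta^0$, one can use Proposition~\ref{Prop:Bidual} directly.) Hence $L_\Phi^*$ is compact, so $L_\Phi$ is compact. Continuity from $w^*$ to $w$ follows because $L_\Phi^*(Y^*)\subset X$; for functionals in $\Delta_{\mathfrak I}(Y)^*$ beyond $Y^*$ I would approximate using compactness. This establishes that $T$ is well defined.

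For the isometry I need $\|\Phi\|\le\|L_\Phi\|$, and this is the step I expect to be hardest. The natural route uses (b). Every $z^*\in (X\widehat{\otimes}_\varepsilon Y)^*$ is the $w^*$-limit of a sequence $w_n=\widehat G(\sum_i x_{n,i}^*\otimes y_{n,i}^*)$, and the Semadeni condition gives $\Phi(z^*)=\lim\Phi(w_n)$. Moreover, $\Phi(w_n)=\langle L_\Phi, \sum_i x_{n,i}^*\otimes y_{n,i}^*\rangle$, viewing $L_\Phi\in X\widehat{\otimes}_\varepsilon \Delta_{\mathfrak I}(Y)\subset X\widehat{\otimes}_\varepsilon Y^{**}$ and pairing it with $\widehat G$ of the $Y^{**}$-level functional. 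Thus $\Phi$ agrees with the canonical extension of $L_\Phi$ as a functional on $(X\widehat{\otimes}_\varepsilon Y)^*$. I would identify $L_\Phi$ with an element of $X\widehat{\otimes}_\varepsilon Y^{**}\subset(X\widehat{\otimes}_\varepsilon Y)^{**}$, which is the isometric inclusion via injectivity of $\varepsilon$. Two $w^*$-sequentially continuous functionals that agree on a $w^*$-sequentially dense set coincide, so the norms agree; the delicate point is justifying that $w^*$-sequential continuity for the $L_\Phi$-side, which again follows from compactness of $L_\Phi$. The same identification also gives injectivity.

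Finally, if $X$ or $\Delta_{\mathfrak I}(Y)$ has (AP), then $\mathcal K_{w^*}(X^*,\Delta_{\mathfrak I}(Y))=X\widehat{\otimes}_\varepsilon\Delta_{\mathfrak I}(Y)$. For surjectivity, elementary tensors $x\otimes y^{**}$ with $y^{**}\in\Delta_{\mathfrak I}(Y)$ equal $T(I_x^{**}y^{**})$, where $I_x:Y\to X\widehat{\otimes}_\varepsilon Y$ is $y\mapsto x\otimes y$, and $I_x^{**}y^{**}\in\Delta_{\mathfrak I}$ by functoriality. Since $T$ is an isometry its range is closed, so it contains the closure of these tensors, which is everything.
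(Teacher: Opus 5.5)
Your treatment of well-definedness, compactness, injectivity and surjectivity follows the paper's proof closely. This covers the operators $u\mapsto u(x^*)$ and $u\mapsto u(y^*)$ combined with functoriality and (a), limitedness of the image of $B_{Y^*}$ via the Semadeni condition plus Gelfand--Phillips, injectivity from (b), and the preimages $I_x^{**}(y^{**})$ for surjectivity. Your check that $\widehat G(x_n^*\otimes y_n^*)$ is weak$^*$-null is more explicit than the paper's.

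The gap is in the isometry step, and it is not where you place it. For $v=\sum_j x_j\otimes y_j^{**}\in X\otimes Y^{**}$ put $\Psi_v=\sum_j I_{x_j}^{**}(y_j^{**})\in(X\widehat{\otimes}_\varepsilon Y)^{**}$. Testing $\Psi_v$ on the functionals $\widehat G(x^*\otimes y^*)$ only gives $\|\Psi_v\|\geq\|v\|_\varepsilon$. What you need is $|\Psi_v(\psi)|\leq\|v\|_\varepsilon\|\psi\|$ for an arbitrary $\psi\in(X\widehat{\otimes}_\varepsilon Y)^*$. This does not follow from injectivity of $\varepsilon$: injectivity concerns subspaces of the factors and says nothing about how the $y_j^{**}$ act on the functionals $I_{x_j}^*\psi\in Y^*$. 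The bound is exactly the content of the paper's argument. By the principle of local reflexivity there is $U:\operatorname{span}\{y_j^{**}\}\to Y$ with $\|U\|\leq 1+\delta$ and $\langle I_{x_j}^*\psi,Uy_j^{**}\rangle=y_j^{**}(I_{x_j}^*\psi)$ for all $j$. Then $\Psi_v(\psi)=\psi\bigl(\sum_j x_j\otimes Uy_j^{**}\bigr)$, so $|\Psi_v(\psi)|\leq(1+\delta)\|\psi\|\,\|v\|_\varepsilon$. Without this, your argument gives no upper bound for $\|\Phi\|$ in terms of $\|L_\Phi\|$.

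There are two further corrections.

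First, regarding $L_\Phi$ as an element of $X\widehat{\otimes}_\varepsilon\Delta_{\mathfrak I}(Y)$, rather than merely of $\mathcal K_{w^*}(X^*,\Delta_{\mathfrak I}(Y))$, requires (AP) of $X$ or of $\Delta_{\mathfrak I}(Y)$. As written, your isometry argument therefore only covers the second part of the statement. The paper's proof has the same limitation. Local reflexivity yields $\|\Phi\|=\|T\Phi\|$ only when $T\Phi\in X\otimes\Delta_{\mathfrak I}(Y)$. Passing to arbitrary $\Phi$ needs density of such elements (i.e.\ (AP)), together with injectivity of $T$ and a Cauchy-sequence argument.

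Second, weak$^*$-sequential continuity of $\Psi_{L_\Phi}$ does not follow from compactness of $L_\Phi$. For $X=\mathbb R$, every such operator is just a vector $y^{**}\in Y^{**}$, and its canonical image is $y^{**}$ itself, which need not be weak$^*$-sequentially continuous. What makes it work is $L_\Phi(X^*)\subset\Delta_{\mathfrak I}(Y)$:
\begin{itemize}
\item approximate $L_\Phi$ by $v_n\in X\otimes\Delta_{\mathfrak I}(Y)$;
\item each $\Psi_{v_n}$ lies in $\Delta_{\mathfrak I}(X\widehat{\otimes}_\varepsilon Y)$ by functoriality, hence so does the norm limit $\Psi_{L_\Phi}$ (using the norm bound above);
\item then $\Phi-\Psi_{L_\Phi}\in\Delta_{\mathfrak I}(X\widehat{\otimes}_\varepsilon Y)$ vanishes on $\widehat G(X^*\otimes Y^*)$, so it is $0$ by your injectivity argument.
\end{itemize}

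With these repairs your route becomes a repackaging of the paper's. You identify $\Phi$ with $\Psi_{L_\Phi}$ directly, whereas the paper computes the norm of the canonical preimage of each finite tensor.
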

\begin{proof}
Let $E=X\widehat{\otimes}_\varepsilon Y$ and fix $\Phi\in \Delta_{\mathfrak I}(E)$. Define bounded linear operators $L_\Phi:X^*\to Y^{**}$ and $R_\Phi:Y^*\to X^{**}$ by
\[
L_\Phi(x^*)(y^*)=\Phi(\widehat{G}(x^*\otimes y^*))
\quad\text{and}\quad
R_\Phi(y^*)(x^*)=\Phi(\widehat{G}(x^*\otimes y^*)).
\]

We prove that $L_\Phi(X^*)\subset \Delta_{\mathfrak I}(Y)$. For each $x^*\in X^*$, consider the bounded linear operator $P_{x^*}:X\widehat{\otimes}_\varepsilon Y\to Y$ defined by
\[
P_{x^*}(u)=u(x^*),
\]
where each element $u \in E$ is identified with an element of $\mathcal{K}_{w^*}(X^*,Y)$. Then $P_{x^*}^*(y^*)=\widehat{G}(x^*\otimes y^*)$ for every $y^*\in Y^*$.
By the functoriality of $\Delta_{\mathfrak I}$, we have $P_{x^*}^{**}(\Phi)\in \Delta_{\mathfrak I}(Y)$. Moreover, for every $y^*\in Y^*$,
\[
P_{x^*}^{**}(\Phi)(y^*)=\Phi(P_{x^*}^*(y^*))=\Phi(\widehat{G}(x^*\otimes y^*))=L_\Phi(x^*)(y^*).
\]
Hence $L_\Phi(x^*)=P_{x^*}^{**}(\Phi)\in \Delta_{\mathfrak I}(Y)$, and therefore $L_\Phi(X^*)\subset \Delta_{\mathfrak I}(Y)$.

Similarly, for each $y^*\in Y^*$, consider the bounded linear operator $Q_{y^*}:X\widehat{\otimes}_\varepsilon Y\to X$ defined by
\[
Q_{y^*}(u)=u(y^*),
\]
where each element $u \in E$ is identified with an element of $\mathcal{K}_{w^*}(Y^*,X)$. Arguing as above, we obtain $R_\Phi(Y^*)\subset \Delta_{\mathfrak I}(X)$. By assumption $\Delta_{\mathfrak I}(X)=X$, and therefore $R_\Phi(Y^*)\subset X$.

We next prove that $R_\Phi:Y^*\to X$ is compact. Since $X$ has the Gelfand--Phillips property, it is enough to show that $R_\Phi(B_{Y^*})$ is
limited in $X$. Let $(x_n^*)$ be a weak$^*$-null sequence in $X^*$. For each $n$, choose $y_n^*\in B_{Y^*}$ such that
\[
\sup_{y^*\in B_{Y^*}}|x_n^*(R_\Phi(y^*))|
\leq |x_n^*(R_\Phi(y_n^*))|+\frac1n.
\]
It follows that $(\widehat{G}(x_n^*\otimes y_n^*))_n$ is a weak$^*$-null sequence in $\widehat{G}(X^*\otimes Y^*)$. Since $\Delta_{\mathfrak I}$ satisfies the Semadeni condition, we obtain that 
\[
\lim_{n\to \infty}x_n^*(R_\Phi(y_n^*))=\Phi(\widehat{G}(x_n^*\otimes y_n^*))=0.
\]
It follows that
\[
\lim_{n\to \infty}\sup_{y^*\in B_{Y^*}}|x_n^*(R_\Phi(y^*))|=0.
\]
Thus $R_\Phi(B_{Y^*})$ is limited. By the Gelfand--Phillips property of $X$, we deduce that $R_\Phi(B_{Y^*})$ it is
relatively norm compact. Therefore $R_\Phi$ is a compact operator.

For every $x^*\in X^*$ and $y^*\in Y^*$,
\[
R_\Phi^*(x^*)(y^*)
=
x^*(R_\Phi(y^*))
=
L_\Phi(x^*)(y^*).
\]
 Thus $L_\Phi=R_\Phi^*$. By Schauder's theorem \cite[Theorem 15.3]{Fabian}, $L_\Phi$ and $L_\Phi^*=R_\Phi^{**}$ are compact. Since $R_\Phi$ is compact and takes values in $X$, the range of $R_\Phi^{**}$ is contained in $X$. Hence $L_\Phi$ is weak$^*$-weak continuous. Therefore $L_\Phi\in \mathcal K_{w^*}(X^*,\Delta_{\mathfrak I}(Y))$, and $T$ is well defined. Clearly $T$ is linear and $\|T\|\leq 1$.

To establish that $T$ is injective, let $\Phi \in \Delta_{\mathfrak I}(E)$ be arbitrary and suppose that $T(\Phi)=0$. Then  $\Phi(\widehat{G}(x^*\otimes y^*))=0$
for all $x^*\in X^*$, $y^*\in Y^*$. Since $\widehat G(X^*\otimes Y^*)$ is weak$^*$-sequentially dense in $E^{*}$ and $\Delta_{\mathfrak I}$ satisfies the Semadeni condition, it follows that $\Phi=0$.

Now for each $u\in E$ and for each $x^*\in X^*$ and $y^*\in Y^*$, we have
\begin{align*}
L_u(x^*)(y^*)=\widehat{G}(x^*\otimes y^*)(u)=y^*(u(x^*)).
\end{align*}
Therefore $T(u)=u$, under the canonical identification of $E$ with a subspace of $\mathcal K_{w^*}(X^*,Y)$.

To conclude the proof, let $v=\sum_{j=1}^m x_j\otimes y_j^{**}
\in X\otimes \Delta_{\mathfrak I}(Y)$
be arbitrary. For each
$j$, define $I_j:Y\to E$ by $I_j(y)=x_j\otimes y$. Set
\[
\Phi=\sum_{j=1}^m y_j^{**}\circ I_j^*
=
\sum_{j=1}^m I_j^{**}(y_j^{**})
\in E^{**}.
\]
Since each $y_j^{**}\in \Delta_{\mathfrak I}(Y)$, it follows by functoriality that
$I_j^{**}(y_j^{**})\in \Delta_{\mathfrak I}(E)$. Hence $\Phi\in \Delta_{\mathfrak I}(E)$. By noting that 
$I_j^*(\widehat{G}(x^*\otimes y^*))=x^*(x_j)y^*$ for each $1\leq j \leq m$, we deduce that $T(\Phi)=v$. 

We next show that $\Phi$ and $v$ have the same norm. Let $\psi\in E^*$ and $\varepsilon>0$ be arbitrary. Let $F=\operatorname{span}\{y_j^{**}:1\leq j\leq m\}\subset \Delta_{\mathfrak I}(Y)\subset Y^{**}$. By the principle of local reflexivity, there exists a linear operator $U:F\to Y$ such that $\|U\|\leq 1+\varepsilon$ and $I_j^*(\psi)(U(y_j^{**}))=y_j^{**}(I_j^*(\psi))$ for each $1\leq j\leq m$. Consequently,
\begin{align*}
|\Phi(\psi)|&=\left|\sum_{j=1}^m y_j^{**}(I_j^*(\psi))\right|
=\left|\sum_{j=1}^m I_j^*(\psi)(U(y_j^{**}))\right| \\
&=\left|\sum_{j=1}^m \psi(x_j\otimes U(y_j^{**}))\right|
\leq \|\psi\|\,
\left\|\sum_{j=1}^m x_j\otimes U(y_j^{**})\right\|
\leq (1+\varepsilon)\|\psi\|\,\|v\|.
\end{align*}
Since $\varepsilon>0$ is arbitrary, we get $\|\Phi\|\leq \|v\|$. As
$\|T\|\leq 1$ and $T(\Phi)=v$, it follows that $\|\Phi\|=\|v\|$.
Therefore, $T$ is an isometric embedding whose range is closed and contains
$X\otimes \Delta_{\mathfrak I}(Y)$.

If either $X$ or $\Delta_{\mathfrak I}(Y)$ has (AP), then $X\widehat{\otimes}_\varepsilon \Delta_{\mathfrak I}(Y)$ can be identified with 
$\mathcal K_{w^*}(X^*,\Delta_{\mathfrak I}(Y))$ isometrically. Since
$X\otimes \Delta_{\mathfrak I}(Y)$ is dense in 
$X\widehat{\otimes}_\varepsilon \Delta_{\mathfrak I}(Y)$, we deduce that $T$ is
surjective, and therefore is an isometric isomorphism. Thus
\[
\Delta_{\mathfrak I}(X \widehat{\otimes}_\varepsilon Y)
\cong
X \widehat{\otimes}_\varepsilon \Delta_{\mathfrak I}(Y).
\]
\end{proof}

\begin{theorem}\label{Thm:TensorQuotientDerivative}
Let $X$ and $Y$ be Banach spaces. Let $\mathfrak I$ be a testing ideal and $\Delta_{\mathfrak I}$ the associated ideal-generated assignment. Assume that the mapping
$T:\Delta_{\mathfrak I}(X\widehat{\otimes}_\varepsilon Y) \to X\widehat{\otimes}_\varepsilon\Delta_{\mathfrak I}(Y)$
from Theorem \ref{Thm:DeltaInjectiveTensor} is an isometric isomorphism. Additionally, assume that $Y$ is complemented in $\Delta_{\mathfrak I}(Y)$ (which always happens if $Y$ is a dual space).
Then
\[
\mathcal{S}_{\mathfrak I}(X\widehat{\otimes}_\varepsilon Y)\sim X\widehat{\otimes}_\varepsilon \mathcal{S}_{\mathfrak I}(Y).
\]
\end{theorem}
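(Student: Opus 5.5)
Since $T$ is an isometric isomorphism with $T(u)=u$ for every $u\in X\widehat{\otimes}_\varepsilon Y$, it carries the subspace $X\widehat{\otimes}_\varepsilon Y\subset \Delta_{\mathfrak I}(X\widehat{\otimes}_\varepsilon Y)$ onto the canonical copy of $X\widehat{\otimes}_\varepsilon Y$ inside $X\widehat{\otimes}_\varepsilon \Delta_{\mathfrak I}(Y)$. Passing to quotients, $T$ induces an isometric isomorphism
\[
\mathcal S_{\mathfrak I}(X\widehat{\otimes}_\varepsilon Y)\cong \bigl(X\widehat{\otimes}_\varepsilon \Delta_{\mathfrak I}(Y)\bigr)\big/\bigl(X\widehat{\otimes}_\varepsilon Y\bigr).
\]
Here $X\widehat{\otimes}_\varepsilon Y$ is viewed as a closed subspace of $X\widehat{\otimes}_\varepsilon \Delta_{\mathfrak I}(Y)$; this is legitimate because the injective norm respects subspaces. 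So everything reduces to identifying this last quotient with $X\widehat{\otimes}_\varepsilon \mathcal S_{\mathfrak I}(Y)$.

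This is where complementation enters. The general caveat is that the injective tensor product does not respect quotients. Let $P:\Delta_{\mathfrak I}(Y)\to Y$ be a bounded projection and set $Z=\ker P$. Then $\Delta_{\mathfrak I}(Y)=Y\oplus Z$ topologically, and the quotient map $q:\Delta_{\mathfrak I}(Y)\to \mathcal S_{\mathfrak I}(Y)$ restricts to an isomorphism $Z\to\mathcal S_{\mathfrak I}(Y)$, with inverse $[y^{**}]\mapsto y^{**}-Py^{**}$ of norm at most $1+\|P\|$. Tensoring the bounded operators $P$ and $I-P$ with $\mathrm{id}_X$ (the injective tensor product is functorial for bounded operators, with norm bounded by the product of the norms) gives
\[
X\widehat{\otimes}_\varepsilon \Delta_{\mathfrak I}(Y)\sim \bigl(X\widehat{\otimes}_\varepsilon Y\bigr)\oplus\bigl(X\widehat{\otimes}_\varepsilon Z\bigr).
\]
In this decomposition the first summand is precisely the canonical copy of $X\widehat{\otimes}_\varepsilon Y$, since $\mathrm{id}_X\otimes P$ is a projection onto it. Hence the quotient by $X\widehat{\otimes}_\varepsilon Y$ is isomorphic to $X\widehat{\otimes}_\varepsilon Z$, which in turn is isomorphic to $X\widehat{\otimes}_\varepsilon \mathcal S_{\mathfrak I}(Y)$ via $\mathrm{id}_X\otimes q|_Z$.

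Concretely, I would define $\mathrm{id}_X\otimes q: X\widehat{\otimes}_\varepsilon \Delta_{\mathfrak I}(Y)\to X\widehat{\otimes}_\varepsilon \mathcal S_{\mathfrak I}(Y)$. It is surjective because it has the bounded right inverse $\mathrm{id}_X\otimes (I-P)\circ(q|_Z)^{-1}$. Its kernel is exactly $X\widehat{\otimes}_\varepsilon Y$. To see this, note that $\mathrm{id}_X\otimes q$ vanishes on $X\widehat{\otimes}_\varepsilon Y$. Conversely, if $w$ lies in the kernel, write $w=(\mathrm{id}\otimes P)w+(\mathrm{id}\otimes(I-P))w$; the second term lies in $X\widehat{\otimes}_\varepsilon Z$, on which $\mathrm{id}\otimes q$ is injective (it is an isomorphism there), so that term vanishes. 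The open mapping theorem then yields the isomorphism. Composing with the isometry induced by $T$ finishes the proof.

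For the parenthetical claim, if $Y=W^*$ is a dual space, the Dixmier projection $y^{***}\mapsto (y^{***}|_W)$ maps $Y^{**}$ onto $Y$, and its restriction to $\Delta_{\mathfrak I}(Y)$ is the required projection.

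The main obstacle is the kernel identification, specifically the fact that the canonical map $X\widehat{\otimes}_\varepsilon Y\to X\widehat{\otimes}_\varepsilon\Delta_{\mathfrak I}(Y)$ has closed image equal to the range of $\mathrm{id}\otimes P$. Without complementation this step fails in general, and the complemented decomposition is exactly what bypasses it.
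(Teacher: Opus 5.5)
Your proof is correct and follows essentially the same route as the paper. First, $T$ induces an isometric isomorphism $\mathcal S_{\mathfrak I}(X\widehat\otimes_\varepsilon Y)\cong (X\widehat\otimes_\varepsilon\Delta_{\mathfrak I}(Y))/(X\widehat\otimes_\varepsilon Y)$; then the projection $\mathrm{id}_X\otimes P$ identifies this quotient with $X\widehat\otimes_\varepsilon\ker P\sim X\widehat\otimes_\varepsilon\mathcal S_{\mathfrak I}(Y)$. Your explicit check that $\ker(\mathrm{id}_X\otimes q)=X\widehat\otimes_\varepsilon Y$ simply spells out the kernel identification that the paper states without proof.
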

\begin{proof}
Let $\widetilde T:\mathcal{S}_{\mathfrak I}(X\widehat{\otimes}_\varepsilon Y)\to 
\frac{X\widehat{\otimes}_\varepsilon \Delta_{\mathfrak I}(Y)}
     {X\widehat{\otimes}_\varepsilon Y}$ be the operator defined by $\widetilde T([\varphi])=[T(\varphi)]$. Notice that 
$\widetilde T$ is well defined since, if $[\varphi]=[\psi]$, then $u=\varphi-\psi\in X\widehat{\otimes}_\varepsilon Y$. Since $T(u)=u$ for every $u\in X\widehat{\otimes}_\varepsilon Y$, we deduce that $T(\varphi-\psi)\in X\widehat{\otimes}_\varepsilon Y$. Hence, $\widetilde T([\varphi])=\widetilde T([\psi])$.

We prove that $\widetilde T$ is an isometry. Indeed, let $\varphi\in \Delta_{\mathfrak I}(X\widehat{\otimes}_\varepsilon Y)$ be arbitrary. Since $T(u)=u$ for every $u\in X\widehat{\otimes}_\varepsilon Y$ and $T$ is an isometry, we have
\[
\|\widetilde T([\varphi])\|=\inf_{u\in X\widehat{\otimes}_\varepsilon Y}\|T(\varphi)-u\|
=\inf_{u\in X\widehat{\otimes}_\varepsilon Y}\|T(\varphi)-T(u)\|
=\inf_{u\in X\widehat{\otimes}_\varepsilon Y}\|\varphi-u\|
=\|[\varphi]\|.
\]

We now show that $\widetilde T$ is surjective. Let $[v]\in 
\frac{X\widehat{\otimes}_\varepsilon \Delta_{\mathfrak I}(Y)}
     {X\widehat{\otimes}_\varepsilon Y}$ be arbitrary. Since $T$ is surjective, there exists $\varphi\in \Delta_{\mathfrak I}(X\widehat{\otimes}_\varepsilon Y)$ such that $T(\varphi)=v$. Hence $\widetilde T([\varphi])=[v]$. Therefore $\widetilde T$ is an isometric isomorphism.

Since $Y$ is complemented in $\Delta_{\mathfrak I}(Y)$, let $P:\Delta_{\mathfrak I}(Y)\to Y$ be a bounded projection and put $W=\ker P$. Then $\Delta_{\mathfrak I}(Y)=Y\oplus W$ and
\[
W\sim \Delta_{\mathfrak I}(Y)/Y=\mathcal{S}_{\mathfrak I}(Y).
\]

The operator $\operatorname{Id}_X\otimes P: X\widehat{\otimes}_\varepsilon \Delta_{\mathfrak I}(Y)\to X\widehat{\otimes}_\varepsilon Y$
is a bounded projection, see \cite[Proposition 3.2]{Ryan}. Hence $X\widehat{\otimes}_\varepsilon Y$ is complemented in $X\widehat{\otimes}_\varepsilon \Delta_{\mathfrak I}(Y)$. Moreover, $\ker(\operatorname{Id}_X\otimes P)=X\widehat{\otimes}_\varepsilon W$. Consequently,
\[
\frac{X\widehat{\otimes}_\varepsilon \Delta_{\mathfrak I}(Y)}
     {X\widehat{\otimes}_\varepsilon Y}\sim X\widehat{\otimes}_\varepsilon W \sim X\widehat{\otimes}_\varepsilon \mathcal{S}_{\mathfrak I}(Y).
\]
This completes the proof.
\end{proof}

\begin{lemma}\label{Lem:Intersection}
Let $X$ and $Y$ be Banach spaces, and let $A\subset X$ and $B\subset Y$ be
closed subspaces. If $A$ has  (AP), then, inside
$X\widehat{\otimes}_{\varepsilon}Y$, one has
\[
\bigl(X\widehat{\otimes}_{\varepsilon}B\bigr)
\cap\bigl(A\widehat{\otimes}_{\varepsilon}Y\bigr)=A\widehat{\otimes}_{\varepsilon}B .
\]
\end{lemma}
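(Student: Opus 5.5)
The inclusion $A\widehat{\otimes}_{\varepsilon}B\subset (X\widehat{\otimes}_{\varepsilon}B)\cap(A\widehat{\otimes}_{\varepsilon}Y)$ is immediate. Indeed, the injective tensor norm respects subspaces, so each of $A\widehat{\otimes}_{\varepsilon}B$, $X\widehat{\otimes}_{\varepsilon}B$ and $A\widehat{\otimes}_{\varepsilon}Y$ is a closed subspace of $X\widehat{\otimes}_{\varepsilon}Y$. Moreover, the algebraic tensor product $A\otimes B$ lies in both $X\otimes B$ and $A\otimes Y$. The work is in the reverse inclusion, and the plan is to use the operator picture together with the approximation property of $A$.

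Let $u\in (X\widehat{\otimes}_{\varepsilon}B)\cap(A\widehat{\otimes}_{\varepsilon}Y)$. View $u$ as a $w^*$--$w$ continuous compact operator $u:Y^*\to X$, $u(y^*)=(\mathrm{Id}\otimes y^*)(u)$. Since $u\in A\widehat{\otimes}_{\varepsilon}Y$ and $A$ is closed, $u$ takes values in $A$ (this holds for elementary tensors and passes to limits). Hence $K=\overline{u(B_{Y^*})}$ is a compact subset of $A$. Since $A$ has (AP), for each $\varepsilon>0$ we choose a finite-rank operator $S:A\to A$ with $\|Sa-a\|<\varepsilon$ for all $a\in K$, and write $S=\sum_{i=1}^m a_i^*\otimes a_i$ with $a_i^*\in A^*$ and $a_i\in A$. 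Extend each $a_i^*$ by Hahn--Banach to $x_i^*\in X^*$, and set $\widetilde S=\sum_i x_i^*\otimes a_i:X\to A\subset X$, so that $\widetilde S$ agrees with $S$ on $A$.

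Next consider $w=(\widetilde S\otimes \mathrm{Id}_Y)(u)\in X\widehat{\otimes}_{\varepsilon}Y$. On one hand, $\widetilde S\otimes\mathrm{Id}_Y$ is bounded on injective tensor products \cite[Proposition 3.2]{Ryan}. It maps $X\widehat{\otimes}_{\varepsilon}B$ into the closure of $A\otimes B$, namely $A\widehat{\otimes}_{\varepsilon}B$. Concretely, $w=\sum_i a_i\otimes (x_i^*\otimes\mathrm{Id})(u)$, and $(x_i^*\otimes\mathrm{Id})(u)\in B$ because $u\in X\widehat{\otimes}_{\varepsilon}B$ and $B$ is closed. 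Hence $w\in A\otimes B$. On the other hand, as an operator $Y^*\to X$ we have $w=\widetilde S\circ u$, so
\[
\|u-w\|_{\varepsilon}=\sup_{y^*\in B_{Y^*}}\|u(y^*)-S(u(y^*))\|\leq\varepsilon,
\]
because $u(y^*)\in K\subset A$. Letting $\varepsilon\to0$ shows $u\in\overline{A\otimes B}=A\widehat{\otimes}_{\varepsilon}B$.

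The main point to check carefully is the identification of elements of $X\widehat{\otimes}_{\varepsilon}Y$ with operators and the two membership facts. The first is that $u\in A\widehat{\otimes}_{\varepsilon}Y$ implies $u(Y^*)\subset A$. The second is that $u\in X\widehat{\otimes}_{\varepsilon}B$ implies $(x^*\otimes\mathrm{Id})(u)\in B$. Both follow by continuity from elementary tensors, since the slice maps $u\mapsto (\mathrm{Id}\otimes y^*)(u)$ and $u\mapsto(x^*\otimes\mathrm{Id})(u)$ are bounded and the relevant subspaces are closed. The identity $\|u\|_\varepsilon=\sup_{y^*\in B_{Y^*}}\|u(y^*)\|$ gives the norm estimate. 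No approximation property of $X$, $Y$ or $B$ is needed, only the (AP) of $A$ used on the compact set $K$.
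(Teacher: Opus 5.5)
Your proof is correct and follows the same route as the paper. Both arguments take finite-rank approximations from the (AP) of $A$, extend the coefficient functionals by Hahn--Banach, and use the slice-map observation that $(\widetilde S\otimes \mathrm{Id}_Y)u\in A\otimes B$. The only difference is that you check the convergence $(\widetilde S\otimes\mathrm{Id}_Y)u\to u$ directly, using the compact set $\overline{u(B_{Y^*})}\subset A$ and the operator-norm description of $\|\cdot\|_\varepsilon$, whereas the paper simply invokes the standard net characterization of (AP) on $A\widehat{\otimes}_{\varepsilon}Y$.
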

\begin{proof}
The inclusion $A\widehat{\otimes}_{\varepsilon}B\subset\bigl(X\widehat{\otimes}_{\varepsilon}B\bigr)\cap\bigl(A\widehat{\otimes}_{\varepsilon}Y\bigr)
$ is immediate. Conversely, let $u\in \bigl(X\widehat{\otimes}_{\varepsilon}B\bigr)\cap \bigl(A\widehat{\otimes}_{\varepsilon}Y\bigr)$ be arbitrary. We regard $u$ as an element of $A\widehat{\otimes}_{\varepsilon}Y$. Since $A$ has  (AP), there exists a net
$(S_\alpha)_{\alpha\in I}$ of finite-rank operators on $A$ such that $\lim_\alpha(S_\alpha\otimes I_Y)v=v$  for every $v\in A\widehat{\otimes}_{\varepsilon}Y$. For a given $\alpha\in I$, assume that $S_\alpha:A\to A$ is given by the formula
\[S_\alpha(x)=\sum_{j=1}^{m} a_j a_j^*(x).\]
with $a_j\in A$ and $a_j^*\in A^*$. By Hahn--Banach theorem, choose norm-preserving extensions
$\widetilde a_j^*\in X^*$ of $a_j^*$. Since $u\in X\widehat{\otimes}_{\varepsilon}B$, it follows that $(\widetilde a_j^*\otimes I_Y)u\in B$, for each $j=1,\dots,m$. Therefore
\[
(S_\alpha\otimes I_Y)u=\sum_{j=1}^{m}
a_j\otimes(\widetilde a_j^*\otimes I_Y)u
\in A\otimes B.
\]
Hence $(S_\alpha\otimes I_Y)u\in A\widehat{\otimes}_{\varepsilon}B$ for every $\alpha$. Recalling that  $(S_\alpha\otimes I_Y)u\to u$,
we conclude that $u\in A\widehat{\otimes}_{\varepsilon}B$. This proves the reverse inclusion.
\end{proof}

The following theorem was also inspired by \cite[Proposition 5.1]{Kapp}

\begin{theorem}\label{Thm:TensorEmbeddingIntersection}
Let $X$ and $Y$ be Banach spaces. Let $\mathfrak I$ be a testing ideal and $\Delta_{\mathfrak I}$ the associated ideal-generated assignment. Assume that
\begin{itemize}
    \item[(a)] $X^{**}$ and $Y^{**}$ have (AP);
    \item[(b)]
$\widehat{G}(B_{X^*\widehat{\otimes}_\pi Y^*})$ is weak$^*$-sequentially dense in $B_{(X\widehat{\otimes}_\varepsilon Y)^*}$;
    \item[(c)] $B_{X^*}$ and $B_{Y^*}$ are weak$^*$-sequentially compact;
    \item[(d)] either $\Delta_{\mathfrak I}(X)$ or $\Delta_{\mathfrak I}(Y)$ has (AP).
\end{itemize}
Then there is a canonical isometric isomorphism $J:\Delta_{\mathfrak I}(X\widehat{\otimes}_{\varepsilon}Y) \to \Delta_{\mathfrak I}(X) \widehat{\otimes}_{\varepsilon} \Delta_{\mathfrak I}(Y)$ such that $J(u)=u$ for every $u\in X\widehat{\otimes}_{\varepsilon}Y$.
\end{theorem}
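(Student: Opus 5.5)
The plan is to realize $J$ as the map $\Phi\mapsto L_\Phi$ from Theorem~\ref{Thm:DeltaInjectiveTensor}, where $L_\Phi(x^*)(y^*)=\Phi(\widehat G(x^*\otimes y^*))$. Since $X$ is no longer assumed $\Delta_{\mathfrak I}$-Mazur, we work inside $X^{**}\widehat{\otimes}_\varepsilon Y^{**}$ and then cut down to $\Delta_{\mathfrak I}(X)\widehat{\otimes}_\varepsilon\Delta_{\mathfrak I}(Y)$ using Lemma~\ref{Lem:Intersection}.

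\textbf{Step 1: $J$ lands in $X^{**}\widehat{\otimes}_\varepsilon Y^{**}$.} Fix $\Phi\in\Delta_{\mathfrak I}(X\widehat{\otimes}_\varepsilon Y)$. Exactly as in the proof of Theorem~\ref{Thm:DeltaInjectiveTensor}, the operators $P_{x^*}$ and $Q_{y^*}$ and functoriality give $L_\Phi(X^*)\subset\Delta_{\mathfrak I}(Y)$ and $R_\Phi(Y^*)\subset\Delta_{\mathfrak I}(X)$, with $L_\Phi=R_\Phi^*|_{X^*}$ in the obvious sense. The Gelfand--Phillips argument is not available here, so compactness of $L_\Phi$ must come from (c) and the Semadeni condition (Proposition~\ref{Prop:Bidual}).
\begin{itemize}
\item[(i)] First I show that $L_\Phi$ maps weak$^*$-null sequences in $B_{X^*}$ to norm-null sequences. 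If $\|L_\Phi(x_n^*)\|>\delta$ along a subsequence, pick $y_n^*\in B_{Y^*}$ with $|L_\Phi(x_n^*)(y_n^*)|>\delta$. By (c), pass to a subsequence with $y_n^*\to y^*$ weak$^*$. Then both $\widehat G(x_n^*\otimes y_n^*)$ and $\widehat G(x_n^*\otimes y^*)$ are weak$^*$-null, and the Semadeni condition applied to $\Phi$ gives a contradiction.
\item[(ii)] Since $B_{X^*}$ is weak$^*$-sequentially compact, property (i) makes $L_\Phi$ compact.
\end{itemize}
Hence $L_\Phi$ is a compact operator $X^*\to Y^{**}$, and by the symmetric argument $R_\Phi$ is compact as well. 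Since $X^{**}$ has (AP) by (a), $R_\Phi$ is a compact operator from $Y^*$ into $X^{**}$ that is a norm limit of finite-rank operators. Therefore $L_\Phi$ is identified with an element of $X^{**}\widehat{\otimes}_\varepsilon Y^{**}$ lying in both $\Delta_{\mathfrak I}(X)\widehat{\otimes}_\varepsilon Y^{**}$ and $X^{**}\widehat{\otimes}_\varepsilon\Delta_{\mathfrak I}(Y)$. By (d), Lemma~\ref{Lem:Intersection} applied with $A=\Delta_{\mathfrak I}(X)$ (or symmetrically with $B=\Delta_{\mathfrak I}(Y)$) gives $J(\Phi)\in\Delta_{\mathfrak I}(X)\widehat{\otimes}_\varepsilon\Delta_{\mathfrak I}(Y)$.

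\textbf{Step 2: $J$ is an isometry.} By construction
\[
\|J(\Phi)\|_\varepsilon=\sup\{|\Phi(\widehat G(x^*\otimes y^*))|:x^*\in B_{X^*},\ y^*\in B_{Y^*}\},
\]
and by convexity this equals $\sup_{w\in B_{X^*\widehat{\otimes}_\pi Y^*}}|\Phi(\widehat G w)|$. By (b), every $\psi\in B_{(X\widehat{\otimes}_\varepsilon Y)^*}$ is a weak$^*$-limit of a sequence $\widehat G(w_n)$ with $w_n\in B_{X^*\widehat{\otimes}_\pi Y^*}$. Because $\Phi$ is weak$^*$-sequentially continuous (Semadeni condition), $\Phi(\widehat G(w_n))\to\Phi(\psi)$. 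Hence $\|\Phi\|=\|J(\Phi)\|$, and in particular $J$ is injective. It is also immediate that $J(u)=u$ for every $u\in X\widehat{\otimes}_\varepsilon Y$.

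\textbf{Step 3: surjectivity (main obstacle).} Since $J$ is an isometry, its range is closed, so it suffices to hit elementary tensors $x^{**}\otimes y^{**}$ with $x^{**}\in\Delta^0_{\mathfrak I}(X)$ and $y^{**}\in\Delta^0_{\mathfrak I}(Y)$; density of such tensors follows from bilinearity and norm closures. Given these, define
\[
\Phi(\psi)=x^{**}(y^{**}\circ T_\psi),
\]
where $T_\psi\colon X\to Y^*$ is given by $T_\psi(x)=\psi(x\otimes\cdot)$. One checks directly that $J(\Phi)=x^{**}\otimes y^{**}$. The real work is to show $\Phi\in\Delta^0_{\mathfrak I}(X\widehat{\otimes}_\varepsilon Y)$, and this is where condition (3) of Definition~\ref{Def:Test} is needed.
\begin{itemize}
\item[(i)] Fix $A\in\mathfrak I((X\widehat{\otimes}_\varepsilon Y)^*)$. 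The map $\psi\mapsto T_\psi$ is the adjoint of the formal identity $X\widehat{\otimes}_\pi Y\to X\widehat{\otimes}_\varepsilon Y$, where $(X\widehat{\otimes}_\pi Y)^*=\mathcal L(X,Y^*)$. So condition (2) gives $\{T_\psi:\psi\in A\}\in\mathfrak I(\mathcal L(X,Y^*))$.
\item[(ii)] Condition (3) then gives $\{y^{**}\circ T_\psi:\psi\in A\}\in\mathfrak I(X^*)$. Hence there is $x\in X$ with $\Phi(\psi)=y^{**}(T_\psi x)$ for all $\psi\in A$.
\item[(iii)] The set $\{T_\psi x:\psi\in A\}=I_x^*(A)$, with $I_x(y)=x\otimes y$, lies in $\mathfrak I(Y^*)$ by condition (2). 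So there is $y\in Y$ such that $\Phi$ agrees with $x\otimes y$ on $A$.
\end{itemize}
This shows $\Phi\in\Delta^0_{\mathfrak I}(X\widehat{\otimes}_\varepsilon Y)$ and completes surjectivity. I expect the delicate points to be this step together with the compactness argument in Step~1.
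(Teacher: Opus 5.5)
Your proposal is correct and follows essentially the same route as the paper. The shared steps are compactness of $L_\Phi$ and $R_\Phi$ from (c) plus the Semadeni condition, the identification via (a) followed by Lemma~\ref{Lem:Intersection} with (d), the isometry via (b), and surjectivity via $\psi\mapsto x^{**}(y^{**}\circ T_\psi)$ using conditions (2) and (3) of Definition~\ref{Def:Test}.

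One point of precision in Step~1: approximability of $R_\Phi$ inside $X^{**}\widehat{\otimes}_\varepsilon Y^{**}$ does not by itself put the tensor in the two subspaces. Argue as the paper does instead: $L_\Phi\in\mathcal K(X^*,\Delta_{\mathfrak I}(Y))=X^{**}\widehat{\otimes}_\varepsilon\Delta_{\mathfrak I}(Y)$ since $X^{**}$ has (AP), and $R_\Phi\in\mathcal K(Y^*,\Delta_{\mathfrak I}(X))=Y^{**}\widehat{\otimes}_\varepsilon\Delta_{\mathfrak I}(X)$ since $Y^{**}$ has (AP). Also, extracting a weak$^*$-convergent subsequence of $(y_n^*)$ is unnecessary, because $\widehat G(x_n^*\otimes y_n^*)$ is already weak$^*$-null.
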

\begin{proof}
Put $E=X\widehat{\otimes}_\varepsilon Y$ and let $\Phi\in \Delta_{\mathfrak I}(E)$ be arbitrary. Consider the bounded linear operators $L_\Phi:X^*\to Y^{**}$, $R_\Phi:Y^*\to X^{**}$, defined by
\[
L_\Phi(x^*)(y^*)=\Phi(\widehat{G}(x^*\otimes y^*))
=R_\Phi(y^*)(x^*).
\]
Proceeding as in the proof of Theorem \ref{Thm:DeltaInjectiveTensor}, we obtain that $L_\Phi(X^*)\subset \Delta_{\mathfrak I}(Y)$ and 
$R_\Phi(Y^*)\subset \Delta_{\mathfrak I}(X)$. We prove that $L_\Phi$ is a compact operator. Indeed, let $(x_n^*)_n$ be a sequence in $B_{X^*}$. Since $B_{X^*}$ is weak$^*$-sequentially compact, after
passing to a subsequence we may assume that $x_n^*\xrightarrow{w^*}x^*$ for some $x^*\in B_{X^*}$. We claim that $\|L_\Phi(x_n^*)-L_\Phi(x^*)\|\to 0$.
Otherwise, passing to a further subsequence, there exist $\varepsilon>0$ and $y_n^*\in B_{Y^*}$ such that
\[\left|\Phi(\widehat{G}((x_n^*-x^*)\otimes y_n^*))\right|=\left|\bigl(L_\Phi(x_n^*)-L_\Phi(x^*)\bigr)(y_n^*)\right|\geq \varepsilon\]
for every $n$. Since $(x_n^*-x^*)_n$ is a weak$^*$-null sequence in $X^{*}$, it follows that $\bigl(\widehat{G}((x_n^*-x^*)\otimes y_n^*)\bigr)_n$ is a weak$^*$-null sequence in $\widehat{G}(X^*\otimes Y^*)$ with respect to the weak$^*$-topology $\sigma(E^{*},E)$. Since $\Phi\in \Delta_{\mathfrak I}(E)$ and $\Delta_{\mathfrak I}$ satisfies the Semadeni condition, we obtain that $\Phi(\widehat{G}((x_n^*-x^*)\otimes y_n^*))\to 0$, a contradiction. Therefore $L_\Phi(B_{X^*})$ is relatively compact, and hence $L_\Phi$ is a compact operator. Similarly, using the weak$^*$-sequential compactness of $B_{Y^*}$, we also conclude that $R_\Phi$ is a compact operator.

Since $X^{**}$ has (AP), the compact operator
$L_\Phi:X^*\to \Delta_{\mathfrak I}(Y)$ corresponds canonically to an element of
$X^{**}\widehat{\otimes}_{\varepsilon}\Delta_{\mathfrak I}(Y)$, see
\cite[Corollary 4.13]{Ryan}. Similarly, since $Y^{**}$ has (AP), the compact
operator $R_\Phi:Y^*\to \Delta_{\mathfrak I}(X)$ corresponds canonically to an
element of $\Delta_{\mathfrak I}(X)\widehat{\otimes}_{\varepsilon}Y^{**}$. Both elements induce the same bilinear form
\[
(x^*,y^*)\mapsto \Phi(x^*\otimes y^*).
\]
Thus we define $J(\Phi)=L_\Phi$, viewed as an element of the intersection $\bigl(X^{**}\widehat{\otimes}_{\varepsilon}\Delta_{\mathfrak I}(Y) \bigr)
\cap
\bigl(\Delta_{\mathfrak I}(X) \widehat{\otimes}_{\varepsilon}Y^{**}\bigr)$. And since either $\Delta_{\mathfrak I}(X)$ or $\Delta_{\mathfrak I}(Y)$ has (AP), it follows by Lemma \ref{Lem:Intersection} that $J$ is a well defined mapping from $\Delta_{\mathfrak I}(X\widehat{\otimes}_{\varepsilon}Y)$ to  $\Delta_{\mathfrak I}(X) \widehat{\otimes}_{\varepsilon} \Delta_{\mathfrak I}(Y)$. 

It is readily seen that $J$ is linear and that $\|J(\Phi)\|\leq \|\Phi\|$ for every $\Phi \in \Delta_{\mathfrak I}(E)$. To prove the reverse inequality, let $\psi\in B_{(X\widehat{\otimes}_{\varepsilon}Y)^*}$. By assumption, there exists a sequence $(\psi_n)_n$ in
$\widehat{G}(B_{X^*\widehat{\otimes}_{\pi}Y^*})$, such that $\psi_n\xrightarrow{w^*}\psi$. Then $(\psi_n-\psi)_n$ is a weak$^*$-null sequence in
$(X\widehat{\otimes}_\varepsilon Y)^*$, and since $\Phi\in \Delta_{\mathfrak I}(E)$ and $\Delta_{\mathfrak I}$ satisfies the Semadeni condition, we obtain $\lim_{n\to \infty}\Phi(\psi_n)=\Phi(\psi)$. Therefore
\[
|\Phi(\psi)|
\leq
\sup_{\eta\in \widehat{G}(B_{X^*\widehat{\otimes}_{\pi}Y^*})}
|\Phi(\eta)|.
\]

Since the unit ball of $X^*\widehat{\otimes}_{\pi}Y^*$ is the closed
absolutely convex hull of the elementary tensors
$x^*\otimes y^*$ with $\|x^*\|\leq 1$ and $\|y^*\|\leq 1$, it follows that
\begin{align*}
|\Phi(\psi)|&\leq\sup_{\eta\in \widehat{G}(B_{X^*\widehat{\otimes}_{\pi}Y^*})}
|\Phi(\eta)|\leq\sup_{\|x^*\|\leq 1,\ \|y^*\|\leq 1}|\Phi(\widehat G(x^*\otimes y^*))|\\
&=\sup_{\|x^*\|\leq 1}\|L_\Phi(x^*)\|=\|L_\Phi\|=\|J(\Phi)\|.
\end{align*}
Taking the supremum over $\psi\in B_{(X\widehat{\otimes}_{\varepsilon}Y)^*}$, we obtain $\|\Phi\|\leq \|J(\Phi)\|$. Thus $J$ is an isometry.

Now, notice that
\[
J(u)(x^*)(y^*)=L_u(x^*)(y^*)=\widehat{G}(x^*\otimes y^*)(u)
\]
for every $x^*\in X^*$ and $y^*\in Y^*$. Hence $J(u)$ and $u$ coincide on
$\widehat{G}(X^*\otimes Y^*)$. Since $\operatorname{span}\widehat{G}(X^*\otimes Y^*)$ is weak$^*$-dense in $E^*$, and both $J(u)$ and $u$ are weak$^*$-continuous on $E^*$, we conclude that $J(u)=u$.

To complete the proof, it remains to show that $J$ is surjective. Since $J$ is an isometry, its range is closed. Hence it is enough to show that the range
contains the dense subspace $\Delta_{\mathfrak I}^0(X)\otimes \Delta_{\mathfrak I}^0(Y)$ of $\Delta_{\mathfrak I}(X)\widehat{\otimes}_{\varepsilon}
\Delta_{\mathfrak I}(Y)$. Let $a^{**}\in \Delta_{\mathfrak{I}}^0(X)$ and $b^{**}\in \Delta_{\mathfrak{I}}^0(Y)$ be arbitrary. We shall construct an element $\Phi_{a^{**},b^{**}}\in \Delta_{\mathfrak I}(X\widehat \otimes_\varepsilon Y)$ such that $J(\Phi_{a^{**},b^{**}})=a^{**}\otimes b^{**}$. 

The strategy is as follows. We use the canonical operator $T:X\widehat{\otimes}_{\pi}Y\to X\widehat \otimes_\varepsilon Y$,
induced by the bilinear map $(x,y)\mapsto x\otimes y$. Given
$\varphi\in (X\widehat \otimes_\varepsilon Y)^*$, we regard $T^*(\varphi)$ as an operator from
$X$ to $Y^*$, evaluate it through $b^{**}$ to obtain a functional on
$X$, and then evaluate this functional through $a^{**}$.

Under the canonical isometric identification $(X\widehat{\otimes}_{\pi}Y)^*
\cong \mathcal L(X,Y^*)$, for each $\varphi\in (X\widehat \otimes_\varepsilon Y)^*$, we have the  operator $T^*(\varphi):X\to Y^*$ given by 
\[(T^*(\varphi))(x)(y)=\varphi(x\otimes y).\]

For each such $\varphi$, the map $V_{b^{**}}^\varphi:X\to \mathbb{R}$ given by $V_{b^{**}}^{\varphi}=b^{**}\circ T^*(\varphi)$ is an element of $X^*$. We set
\[
\Phi_{a^{**},b^{**}}(\varphi)=a^{**}(V_{b^{**}}^\varphi).
\]
Clearly, $|\Phi_{a^{**},b^{**}}(\varphi)|\leq\|a^{**}\|\,\|b^{**}\|\,\|\varphi\|$, so that $\Phi_{a^{**},b^{**}}\in (X\widehat \otimes_\varepsilon Y)^{**}$. We claim that $\Phi_{a^{**},b^{**}}\in \Delta_{\mathfrak{I}}^0(X\widehat \otimes_\varepsilon Y)$. Let $A\in \mathfrak{I}((X\widehat \otimes_\varepsilon Y)^*)$ be arbitrary. By functoriality it follows that $\{T^*(\varphi):\varphi \in A\}$ is an element of  $\mathfrak{I}(\mathcal{L}(X,Y^*))$. Then, by condition (3) in the definition of testing ideal, we obtain 
\[\{V_{b^{**}}^{\varphi}:\varphi \in A\}=\{b^{**}\circ  T^*(\varphi):\varphi \in A\} \in \mathfrak I(X^*).\]

Hence, since $a^{**}\in \Delta_{\mathfrak{I}}^0(X)$, there exists  $a\in X$ such that 
\[
a^{**}(V_{b^{**}}^\varphi)=\Phi_{a^{**},b^{**}}(\varphi)= V_{b^{**}}^{\varphi}(a)
\]
for each $\varphi \in A$. For this fixed $a\in X$, consider the operator $I_a:Y\to X\widehat\otimes_\pi Y$, given by $I_a(y)=a\otimes y$.
Under the canonical identification $(X\widehat\otimes_\pi Y)^* \cong \mathcal L(X,Y^*)$, we have $I_a^*(S)=S(a)$
for every $S\in\mathcal L(X,Y^*)$. Therefore, by functoriality,
\[\{T^*(\varphi)(a):\varphi\in A\}
=
\{I_a^*(T^*(\varphi)):\varphi\in A\}
\in \mathfrak I(Y^*).
\]
Since $b^{**}\in \Delta_{\mathfrak I}^0(Y)$, there exists $b\in Y$ such that 
\[
\Phi_{a^{**},b^{**}}(\varphi)= V_{b^{**}}^{\varphi}(a)=b^{**}(T^*(\varphi)(a))=(T^*(\varphi))(a)(b)=\varphi(a\otimes b)
\]
for each $\varphi \in A$. Since $a\otimes b\in X\widehat \otimes_\varepsilon Y$ we obtain that  $\Phi_{a^{**},b^{**}}\in \Delta_{\mathfrak{I}}^0(X\widehat \otimes_\varepsilon Y)$.

To complete the proof we show that $J(\Phi_{a^{**},b^{**}})=a^{**}\otimes b^{**}$. Indeed, given $x^*\in X^*$ and $y^*\in Y^*$, let $\varphi=\widehat{G}(x^*\otimes y^*)\in (X\widehat \otimes_\varepsilon Y)^*$. Then the associated operator $T^*(\varphi):X\to Y^*$ is given by the formula $T^*(\varphi)(x)=x^*(x)y^*$. Hence, for every $x\in X$, 
\[
V_{b^{**}}^{\varphi}(x)=b^{**}(T^*(\varphi)(x))=x^*(x) b^{**}(y^*).
\]
Therefore,
\begin{align*}
J(\Phi_{a^{**},b^{**}})(x^*\otimes y^*)&=\Phi_{a^{**},b^{**}}(\widehat{G}(x^*\otimes y^*))\\
&=a^{**}\bigl(x^*\, b^{**}(y^*)\bigr)=a^{**}(x^*)b^{**}(y^*)=(a^{**}\otimes b^{**})(x^*\otimes y^*).
\end{align*}

Since elementary tensors in $X^*\otimes Y^*$ separate points of
$\Delta_{\mathfrak I}(X)\widehat\otimes_\varepsilon
\Delta_{\mathfrak I}(Y)$, it follows that $J(\Phi_{a^{**},b^{**}})=a^{**}\otimes b^{**}$. Finally, by linearity, the range of $J$ contains
$\Delta_{\mathfrak{I}}^0(X)\otimes \Delta_{\mathfrak{I}}^0(Y)$.
\end{proof}

We are now ready to state the main result of this section, namely a product rule for the derivative associated with a testing ideal $\mathfrak I$.

\begin{theorem}\label{Thm:ProductRule}  Let $\mathfrak I$ be a testing ideal and $\Delta_{\mathfrak I}$ the associated ideal-generated assignment.
Let $X$ and $Y$ be Banach spaces satisfying all the hypotheses of Theorem~\ref{Thm:TensorEmbeddingIntersection}. Suppose further that $X$ is complemented in $\Delta_{\mathfrak{I}}(X)$ and that $Y$ is complemented in $\Delta_{\mathfrak{I}}(Y)$. Then
\[
\mathcal{S}_{\mathfrak{I}}(X\widehat\otimes_\varepsilon Y)\sim (X\widehat\otimes_\varepsilon \mathcal{S}_{\mathfrak{I}}(Y))
\oplus
\bigl(\mathcal{S}_{\mathfrak{I}}(X)\widehat{\otimes}_{\varepsilon}Y\bigr)
\oplus
\bigl(\mathcal{S}_{\mathfrak{I}}(X)
\widehat{\otimes}_{\varepsilon}
\mathcal{S}_{\mathfrak{I}}(Y)\bigr).
\]
\end{theorem}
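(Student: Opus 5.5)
By Theorem~\ref{Thm:TensorEmbeddingIntersection}, there is a canonical isometric isomorphism $J:\Delta_{\mathfrak I}(X\widehat\otimes_\varepsilon Y)\to\Delta_{\mathfrak I}(X)\widehat\otimes_\varepsilon\Delta_{\mathfrak I}(Y)$ with $J(u)=u$ for every $u\in X\widehat\otimes_\varepsilon Y$. The plan is to transfer the quotient defining $\mathcal S_{\mathfrak I}(X\widehat\otimes_\varepsilon Y)$ through $J$, and then split the target tensor product using the two complementation hypotheses.

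\textbf{Step 1: transferring the quotient.} As in the proof of Theorem~\ref{Thm:TensorQuotientDerivative}, define
\[
\widetilde J:\mathcal S_{\mathfrak I}(X\widehat\otimes_\varepsilon Y)\to
\frac{\Delta_{\mathfrak I}(X)\widehat\otimes_\varepsilon\Delta_{\mathfrak I}(Y)}{X\widehat\otimes_\varepsilon Y},
\qquad \widetilde J([\Phi])=[J(\Phi)].
\]
Here $X\widehat\otimes_\varepsilon Y$ sits inside $\Delta_{\mathfrak I}(X)\widehat\otimes_\varepsilon\Delta_{\mathfrak I}(Y)$ isometrically, by the injectivity of the $\varepsilon$-norm. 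The map $\widetilde J$ is well defined because $J$ fixes $X\widehat\otimes_\varepsilon Y$. It is isometric by the infimum computation used there, and it is onto because $J$ is onto. Hence
\[
\mathcal S_{\mathfrak I}(X\widehat\otimes_\varepsilon Y)\cong \frac{\Delta_{\mathfrak I}(X)\widehat\otimes_\varepsilon\Delta_{\mathfrak I}(Y)}{X\widehat\otimes_\varepsilon Y}.
\]

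\textbf{Step 2: splitting the tensor product.} Let $P:\Delta_{\mathfrak I}(X)\to X$ and $Q:\Delta_{\mathfrak I}(Y)\to Y$ be bounded projections, and set $U=\ker P$ and $W=\ker Q$. Then $U\sim\mathcal S_{\mathfrak I}(X)$ and $W\sim\mathcal S_{\mathfrak I}(Y)$. Write $\Delta_{\mathfrak I}(X)=X\oplus U$ and $\Delta_{\mathfrak I}(Y)=Y\oplus W$. Since the injective tensor product is bilinear with respect to finite direct sums, up to isomorphism (\cite[Proposition 3.2]{Ryan}),
\[
\Delta_{\mathfrak I}(X)\widehat\otimes_\varepsilon\Delta_{\mathfrak I}(Y)\sim (X\widehat\otimes_\varepsilon Y)\oplus(X\widehat\otimes_\varepsilon W)\oplus(U\widehat\otimes_\varepsilon Y)\oplus(U\widehat\otimes_\varepsilon W).
\]
Concretely, the four components are the ranges of the commuting projections $P\otimes Q$, $P\otimes(I-Q)$, $(I-P)\otimes Q$ and $(I-P)\otimes(I-Q)$. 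These are bounded, and they sum to the identity on the algebraic tensor product and hence, by density, everywhere. The range of a tensor product of projections is the injective tensor product of the ranges, again by injectivity of the $\varepsilon$-norm.

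\textbf{Step 3: identifying the quotient.} The first summand is exactly the image of $X\widehat\otimes_\varepsilon Y$ under the inclusion. The quotient by it is therefore isomorphic to the sum of the remaining three summands, namely the kernel of $P\otimes Q$. Replacing $W$ by $\mathcal S_{\mathfrak I}(Y)$ and $U$ by $\mathcal S_{\mathfrak I}(X)$ via the isomorphisms $\operatorname{Id}\otimes S$, which are bounded by the metric mapping property of $\widehat\otimes_\varepsilon$, gives the stated formula.

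The main point needing care is Step 2. One must check that the summand $(I-P)\otimes Q$ really identifies with $U\widehat\otimes_\varepsilon Y$, a closed subspace of the completed tensor product, and likewise for the other summands. This follows because each $S\otimes T$ with $S,T$ projections is a projection onto the closure of $S(\cdot)\otimes T(\cdot)$, which equals the $\varepsilon$-completion of the corresponding algebraic tensor product. Everything else is formal.
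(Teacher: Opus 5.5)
Your proposal is correct and follows essentially the same route as the paper. You transfer the quotient through the isometric isomorphism $J$ of Theorem~\ref{Thm:TensorEmbeddingIntersection}, exactly as in Theorem~\ref{Thm:TensorQuotientDerivative}, then split $\Delta_{\mathfrak I}(X)\widehat\otimes_\varepsilon\Delta_{\mathfrak I}(Y)$ via $P\otimes Q$ and identify the quotient with $\ker(P\otimes Q)$. Your check that the range of each tensored projection is the injective tensor product of the ranges supplies a detail the paper leaves implicit.
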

\begin{proof}
According to Theorem~\ref{Thm:TensorEmbeddingIntersection}, the canonical mapping $J:\Delta_{\mathfrak{I}}(X\widehat\otimes_\varepsilon Y) \to \Delta_{\mathfrak{I}}(X)\widehat{\otimes}_{\varepsilon}\Delta_{\mathfrak{I}}(Y)$
is an isometric isomorphism and satisfies $J(f)=f$ for every
$f\in X\widehat\otimes_\varepsilon Y$.

Proceeding as in the proof of Theorem~\ref{Thm:TensorQuotientDerivative}, we obtain
\[
\mathcal{S}_{\mathfrak{I}}(X\widehat\otimes_\varepsilon Y)
\cong
\frac{\Delta_{\mathfrak{I}}(X)\widehat{\otimes}_{\varepsilon}
\Delta_{\mathfrak{I}}(Y)}
{X\widehat{\otimes}_{\varepsilon}Y}.
\]

Since $X$ is complemented in $\Delta_{\mathfrak{I}}(X)$, let
$P:\Delta_{\mathfrak{I}}(X)\to X$ be a bounded projection and put
$E=\ker P$. Then $\Delta_{\mathfrak{I}}(X)=X\oplus E$
and
\[
E\sim \Delta_{\mathfrak{I}}(X)/X
=\mathcal{S}_{\mathfrak{I}}(X).
\]

Similarly, since $Y$ is complemented in $\Delta_{\mathfrak{I}}(Y)$, let
$Q:\Delta_{\mathfrak{I}}(Y)\to Y$ be a bounded projection and put
$Z=\ker Q$. Then $\Delta_{\mathfrak{I}}(Y)=Y\oplus Z$,
and therefore
\[
Z\sim \Delta_{\mathfrak{I}}(Y)/Y
=\mathcal{S}_{\mathfrak{I}}(Y).
\]

Now, the operator $P\otimes Q:\Delta_{\mathfrak I}(X)\widehat{\otimes}_\varepsilon\Delta_{\mathfrak I}(Y)\to X\widehat{\otimes}_\varepsilon Y$
is a bounded projection, see \cite[Proposition 3.2]{Ryan}. Since $\Delta_{\mathfrak{I}}(X)=X\oplus E$ and $\Delta_{\mathfrak{I}}(Y)=Y\oplus Z$,
we have
\[\Delta_{\mathfrak I}(X)\widehat{\otimes}_\varepsilon\Delta_{\mathfrak I}(Y)= (X\widehat{\otimes}_{\varepsilon}Y)\oplus(X\widehat{\otimes}_{\varepsilon}Z)\oplus (E\widehat{\otimes}_{\varepsilon}Y) \oplus (E\widehat{\otimes}_{\varepsilon}Z).\]
Hence
\[
\ker(P\otimes Q)=(X\widehat{\otimes}_{\varepsilon}Z)\oplus
(E\widehat{\otimes}_{\varepsilon}Y)
\oplus
(E\widehat{\otimes}_{\varepsilon}Z).
\]

Consequently
\begin{align*}
\mathcal{S}_{\mathfrak{I}}(X\widehat\otimes_\varepsilon Y)
\sim\;&
\frac{\Delta_{\mathfrak{I}}(X)\widehat{\otimes}_{\varepsilon}
\Delta_{\mathfrak{I}}(Y)}
{X\widehat{\otimes}_{\varepsilon}Y} \sim
X\widehat{\otimes}_{\varepsilon}Z
\oplus
E\widehat{\otimes}_{\varepsilon}Y
\oplus
E\widehat{\otimes}_{\varepsilon}Z
\\
\sim\;&
\bigl(X\widehat\otimes_\varepsilon \mathcal{S}_{\mathfrak{I}}(Y)\bigr)\oplus
\bigl(\mathcal{S}_{\mathfrak{I}}(X)
\widehat{\otimes}_{\varepsilon}Y\bigr)
\oplus
\bigl(\mathcal{S}_{\mathfrak{I}}(X)
\widehat{\otimes}_{\varepsilon}
\mathcal{S}_{\mathfrak{I}}(Y)\bigr).
\end{align*}
\end{proof}


\section{Derivatives of $C(K,Y)$ spaces}
\label{Sec:DerivativesC(K,Y)}

In this section, we investigate the structural results developed in the previous section in the context of spaces of the form $C(K)$. Our aim is to identify natural classes of compact spaces for which the hypotheses of Theorems~\ref{Thm:TensorEmbeddingIntersection} and \ref{Thm:ProductRule} are satisfied.

We begin by observing that, for every compact Hausdorff space $K$, the bidual $C(K)^{**}$ is isometrically lattice isomorphic to a space of the form $C(L)$ for some compact Hausdorff space $L$; see \cite{Kakutani}. In particular, $C(K)^{**}$ always has the approximation property.

We shall show that scattered compact spaces and compact lines form natural classes of compacta for which the hypotheses of Theorems~\ref{Thm:TensorEmbeddingIntersection} and \ref{Thm:ProductRule} are often satisfied. As a first illustration, for both classes of spaces the dual ball $B_{C(K)^*}$ is weak$^*$-sequentially compact. Indeed, if $K$ is a scattered compact Hausdorff space, then the dual ball $B_{C(K)^*}$ endowed with the weak$^*$ topology is a Radon--Nikod\'ym compact space; see \cite[Section C, c-15]{EGT}. Hence, by \cite[Corollary 5.4]{Namioka}, $B_{C(K)^*}$ is weak$^*$-sequentially compact. For compact lines, the result follows from Theorem~\ref{Thm:CompactLine}.

The remaining hypotheses are more subtle. Recall the canonical operator $\widehat G:X^*\widehat\otimes_\pi Y^*\to  (X\widehat\otimes_\varepsilon Y)^*$.
 Theorem~\ref{Thm:TensorEmbeddingIntersection} requires that $\widehat G\bigl(B_{X^*\widehat\otimes_\pi Y^*}\bigr)$
be weak$^*$-sequentially dense in $B_{(X\widehat\otimes_\varepsilon Y)^*}$. At first sight, this condition may appear difficult to verify.

For scattered compact spaces, however, this condition is automatically satisfied. Indeed, one has the following stronger fact.
\begin{proposition}\label{Prop:CanonicalScattered}
Let $K$ be a scattered compact Hausdorff space and $Y$ be a Banach space. Then the canonical mapping $\widehat G:C(K)^*\widehat{\otimes}_{\pi}Y^*\to (C(K)\widehat{\otimes}_{\varepsilon}Y)^*$ is an isometric isomorphism.
\end{proposition}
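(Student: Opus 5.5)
Since $K$ is scattered, we have $C(K)^*=\ell_1(K)$, so the domain of $\widehat G$ is $\ell_1(K)\widehat\otimes_\pi Y^*$. This space is canonically isometric to $\ell_1(K,Y^*)$, the space of absolutely summable families $(y_t^*)_{t\in K}$ with the norm $\sum_t\|y_t^*\|$. The target is $C(K,Y)^*$. The plan is therefore to show that every functional on $C(K,Y)$ has the form $f\mapsto\sum_{t\in K}y_t^*(f(t))$ with $\sum_t\|y_t^*\|$ equal to the norm of the functional. Tracing the identifications, $\widehat G$ sends $\delta_t\otimes y^*$ to the functional $f\mapsto y^*(f(t))$. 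It therefore corresponds to the map $(y_t^*)_t\mapsto\bigl(f\mapsto\sum_t y_t^*(f(t))\bigr)$, and it suffices to prove that this map is isometric and onto.

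\textbf{Isometry.} We have $\|\widehat G\|\le 1$ already. For the reverse inequality, fix a finitely supported family $(y_t^*)_{t\in F}$, which suffices by density and continuity. Choose $y_t\in B_Y$ with $y_t^*(y_t)$ close to $\|y_t^*\|$. Since $K$ is Hausdorff and every finite set is made of isolated-or-not points that can be separated, Urysohn's lemma gives functions $\varphi_t\in C(K)$ with disjoint supports, $0\le\varphi_t\le1$ and $\varphi_t(t)=1$. Then $f=\sum_t\varphi_t y_t$ lies in the unit ball of $C(K,Y)$ and is almost norming. Hence $\widehat G$ is an isometry, and its range is closed.

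\textbf{Surjectivity.} This is the main obstacle. By the Singer representation, $C(K,Y)^*$ is the space of regular $Y^*$-valued measures of bounded variation on $K$, with the variation norm. Given such a measure $\mu$, its variation $|\mu|$ is a positive Radon measure on a scattered compact space, hence purely atomic: $|\mu|=\sum_t|\mu|(\{t\})\delta_t$. This is the classical Rudin--Pełczyński--Semadeni fact, which is behind the identification $C(K)^*=\ell_1(K)$.

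Let $y_t^*=\mu(\{t\})$. Then $\|y_t^*\|\le|\mu|(\{t\})$, so $(y_t^*)_t$ lies in $\ell_1(K,Y^*)$. The measure $\nu=\mu-\sum_t y_t^*\delta_t$ satisfies $|\nu|\le|\mu|+\sum_t\|y_t^*\|\delta_t$, which is still purely atomic, and $\nu(\{t\})=0$ for every $t$. Hence $|\nu|(\{t\})=\|\nu(\{t\})\|=0$: for a singleton, every partition is trivial, so the variation on a singleton equals the norm of the value. Therefore $|\nu|=0$, so $\nu=0$ and $\mu=\sum_t y_t^*\delta_t=\widehat G((y_t^*)_t)$.

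If one prefers to avoid vector measures, there is an alternative argument. For $\psi\in C(K,Y)^*$ and each $y\in Y$, the functional $g\mapsto\psi(g\,y)$ on $C(K)$ is some $\mu_y\in\ell_1(K)$. Setting $y_t^*(y)=\mu_y(\{t\})$ defines a family of functionals, and one bounds $\sum_t\|y_t^*\|\le\|\psi\|$ by the norming argument above. The difference $\psi-\widehat G((y_t^*)_t)$ then vanishes on $C(K)\otimes Y$, which is dense in $C(K,Y)$, so the difference is zero.
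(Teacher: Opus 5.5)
Your proof is correct and follows the paper's route. You identify the domain with $\ell_1(K,Y^*)$, check that $\widehat G$ becomes $(y_t^*)_t\mapsto\bigl(f\mapsto\sum_t y_t^*(f(t))\bigr)$, and then invoke $C(K,Y)^*\cong\ell_1(K,Y^*)$. The paper simply cites that last identification (Singer, Ryan), whereas you prove it from the general vector-measure representation together with atomicity of Radon measures on scattered compacta.

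One small point concerns only your optional alternative argument. There the bound $\sum_t\|y_t^*\|\le\|\psi\|$ does not follow verbatim from your earlier norming step, because $\psi(\varphi_t y_t)=\mu_{y_t}(\varphi_t)$ also picks up the mass of $\mu_{y_t}$ at points $s\neq t$ in the support of $\varphi_t$. The fix is easy: choose each support to avoid a finite set carrying all but $\varepsilon$ of $|\mu_{y_t}|$.
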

\begin{proof}
Since $C(K,Y)^*$ is isometrically isomorphic to $\ell_1(K,Y^*)$ for every Banach space $Y$ (\cite[p. 192]{Singer} and also \cite[Example 2.6]{Ryan}), we have the following chain of canonical isometries:
\begin{align*}
    C(K)^* \widehat{\otimes}_\pi Y^* \cong  \ell_1(K) \widehat{\otimes}_\pi Y^* \cong  \ell_1(K, Y^*) \cong  C(K,Y)^* \cong  (C(K) \widehat{\otimes}_\varepsilon Y)^*.
\end{align*}

We claim that the above chain of isometries agrees with the canonical mapping $\widehat G$ on $C(K)^*\widehat{\otimes}_{\pi}Y^*$. Indeed, let $\mu\in C(K)^*$ and $y^*\in Y^*$, and let $u=\sum_{j=1}^n f_j\otimes y_j\in C(K)\otimes Y$. On the one hand we have
\[
\widehat G(\mu\otimes y^*)(u)
=
\sum_{j=1}^n \mu(f_j)y^*(y_j).
\]

On the other hand, writing $\mu=\sum_{t\in K} a_t\delta_t$ (since $C(K)^*=\ell_1(K)$), under the canonical identification $\ell_1(K)\widehat{\otimes}_\pi Y^*=\ell_1(K,Y^*)$, the tensor $\mu\otimes y^*$ corresponds to the element $(a_t y^*)_{t\in K}\in \ell_1(K,Y^*)$, which, in turn, defines a functional on $C(K,Y)$ by the formula
\[
f\mapsto \sum_{t\in K} a_t\, y^*(f(t)).
\]
Applying this functional to $u=\sum_{j=1}^n f_j\otimes y_j$, we obtain
\[
\sum_{t\in K} a_t\, y^*\!\left(\sum_{j=1}^n f_j(t)y_j\right)
=
\sum_{j=1}^n \left(\sum_{t\in K} a_t f_j(t)\right) y^*(y_j)
=
\sum_{j=1}^n \mu(f_j)y^*(y_j).
\]

Thus both mappings coincide on elementary tensors, and hence on all of $C(K)^*\widehat{\otimes}_{\pi}Y^*$ by continuity. Hence $\widehat{G}$ is an isometric isomorphism.
\end{proof}

The conclusion of Proposition~\ref{Prop:CanonicalScattered} relies heavily on the scatteredness of $K$. For compact lines, the canonical operator $\widehat G$ need no longer be surjective. Nevertheless, the approximation property required in Theorem~\ref{Thm:TensorEmbeddingIntersection} remains valid. More precisely, the following result shows that elementary tensors are weak$^*$-sequentially dense in the dual unit ball.

\begin{proposition}\label{Prop:CompactLinesAux}
Let $K$ and $L$ be compact lines, and let $\widehat G:C(K)^*\widehat\otimes_\pi C(L)^*\to (C(K)\widehat\otimes_\varepsilon C(L))^*$
be the canonical operator. Then $\widehat G\bigl((C(K)^*\otimes C(L)^*)\cap B_{C(K)^*\widehat\otimes_\pi C(L)^*}\bigr)$
is weak$^*$-sequentially dense in $B_{(C(K)\widehat\otimes_\varepsilon C(L))^*}$.
\end{proposition}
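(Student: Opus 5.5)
We identify $C(K)\widehat\otimes_\varepsilon C(L)\cong C(K\times L)$. The Riesz theorem then identifies the dual unit ball with the set of Radon measures $\mu$ on $K\times L$ with $\|\mu\|\leq 1$. Under these identifications, $\widehat G(\nu\otimes\lambda)$ is the product measure $\nu\times\lambda$. So $\widehat G\bigl((C(K)^*\otimes C(L)^*)\cap B\bigr)$ contains every finite linear combination $\sum_i c_i\,\nu_i\times\lambda_i$ whose projective norm is at most $1$. In particular it contains every finitely supported measure $\sum_i a_i\delta_{(s_i,t_i)}$ with $\sum_i|a_i|\leq 1$, since $\delta_{(s,t)}=\delta_s\otimes\delta_t$ and the projective norm is at most $\sum|a_i|$. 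The plan is therefore to show that every $\mu$ in the unit ball of $\mathcal M(K\times L)$ is a weak$^*$ limit of a \emph{sequence} of measures of this kind, using product "step-function'' measures built from the order structure of the lines.

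The key tool is the appendix result on compact lines (Theorem~\ref{Thm:CompactLine}), which I will use in the following form. For a compact line $K$, every regular Borel measure is carried by a separable piece of the order structure, and $B_{C(K)^*}$ is weak$^*$-sequentially compact. Concretely, given $\mu$ on $K\times L$, I would take its marginals $\mu_K=\pi_K(|\mu|)$ and $\mu_L=\pi_L(|\mu|)$. I would then choose countable sets $D_K\subset K$ and $D_L\subset L$, containing endpoints and the atoms of the marginals, that are dense in the support of the marginals in the order sense. For each $n$, take finite partitions of $K$ and $L$ into order intervals $I^n_1,\dots,I^n_{p_n}$ and $J^n_1,\dots,J^n_{q_n}$ with endpoints from $D_K$ and $D_L$, refining as $n$ grows, and pick points $s_i\in I^n_i$ and $t_j\in J^n_j$. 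Define
\[
\mu_n=\sum_{i,j}\mu(I^n_i\times J^n_j)\,\delta_{(s_i,t_j)} .
\]
Then $\|\mu_n\|\leq\|\mu\|\leq1$, and each $\mu_n$ lies in the required set.

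It remains to check that $\mu_n\to\mu$ weak$^*$, that is, $\int f\,d\mu_n\to\int f\,d\mu$ for every $f\in C(K\times L)$. Since $f$ is uniformly continuous with respect to the product uniformity, one would like the rectangles $I^n_i\times J^n_j$ to have oscillation of $f$ tending to zero. This can fail in a non-metrizable line, and this is the main obstacle. My first attempt would be to reduce to a metrizable quotient. By the appendix result, only countably many "jumps'' matter for $|\mu|$. I would choose a countable family of clopen-ish order cuts generating a separable order quotient $\varphi\colon K\to\widetilde K$ and $\psi\colon L\to\widetilde L$ onto metrizable compact lines such that $|\mu|$ is concentrated on sets where $f$ factors through $\varphi\times\psi$ up to $\varepsilon$. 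Since $f$ ranges over a non-separable space, though, the sequence must be chosen independently of $f$.

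The fallback is the intended use of the appendix: weak$^*$-sequential compactness of $B_{\mathcal M(K\times L)}$ (a product of compact lines is a subspace of a product, and I expect the appendix covers this). With it, it suffices to build a \emph{net} of such finitely supported measures converging to $\mu$, indexed by pairs of finite partitions. This net exists by standard uniform-continuity arguments, because finite partitions into order intervals can be made fine for any single $f$. I would then extract a convergent sequence. This needs care, since sequential compactness yields a convergent subsequence of a sequence, not of a net. So the actual step is to show that the weak$^*$-closed set of limits is reachable by sequences, meaning the ball is Fréchet–Urysohn, which I would derive from the appendix theorem's statement about compact lines.
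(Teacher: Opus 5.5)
\paragraph{The gap.} You correctly observe that the set contains every finitely supported $\sum_i a_i\delta_{(s_i,t_i)}$ with $\sum_i|a_i|\le 1$. This is also the paper's starting point, and your step measures $\mu_n$ are the right objects. The gap is the convergence $\mu_n\to\mu$: you never establish it, and your fallback cannot work.

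\begin{itemize}
\item Theorem~\ref{Thm:CompactLine} is about $C(K)$ for a \emph{single} compact line and says nothing about $B_{\mathcal M(K\times L)}$. The remark after Theorem~\ref{Thm:MultipleDerivatives} says explicitly that weak$^*$-sequential compactness of such product balls is not known.
\item The appendix also does not contain the fact that Radon measures on compact lines are carried by separable sets. That is Mercourakis' theorem, which the paper imports.
\item More seriously, sequential compactness would not let you turn a net into a sequence. It does not imply the Fr\'echet--Urysohn property, and that property fails for dual balls over compact lines. Already for $C([0,\omega_1])$, $\delta_{\omega_1}$ lies in the weak$^*$-closure of $\{\delta_\alpha:\alpha<\omega_1\}$. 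But every sequence $(\alpha_k)$ of countable ordinals is bounded by some $\beta<\omega_1$, and testing against the continuous function $\mathbf 1_{[0,\beta]}$ shows $\delta_{\alpha_k}\not\to\delta_{\omega_1}$.
\end{itemize}

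So weak$^*$-density (trivial anyway, by the bipolar theorem) cannot be upgraded to sequential density by a general principle. The separability of the given $\mu$ has to enter the convergence argument itself.

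\paragraph{The missing idea.} You do not need to control the oscillation of $f$ on the rectangles. Pointwise convergence plus dominated convergence handles every $f\in C(K\times L)$ with one sequence fixed in advance, which is exactly the worry that derailed you. This is the paper's route:
\begin{itemize}
\item Take closed separable $K_0\subset K$, $L_0\subset L$ with $|\mu|(K_0\times L_0)=\|\mu\|$. The paper uses closures of the projections of a separable set of full $|\mu|$-measure; closures of separable sets of full marginal measure, as in your setup, work equally well.
\item Enumerate countable dense sets $D_K=\{d_1,d_2,\dots\}\subset K_0$ and $D_L=\{e_1,e_2,\dots\}\subset L_0$.
\item Let $r_n(x)$ be the least element of $\{d_1,\dots,d_n,\max K_0\}$ that is $\ge x$, define $s_n$ on $L_0$ likewise, and put $\Phi_n=(r_n,s_n)$.
\end{itemize}
Then $(\Phi_n)_*\mu$ is exactly your $\mu_n$ for the partitions into intervals $(c_{i-1},c_i]$, with right endpoints as sample points. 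It is finitely supported with norm at most $\|\mu\|$. Moreover $\int f\,d\mu_n=\int f\circ\Phi_n\,d\mu\to\int f\,d\mu$ by dominated convergence, provided $f\circ\Phi_n\to f$ holds $|\mu|$-almost everywhere.

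\paragraph{Why your atom condition matters.} Your idea of putting the atoms of the marginals into $D_K$ and $D_L$ is genuinely needed for the almost-everywhere convergence. The sequence $r_n(x)$ decreases to $x$, except at points $x\notin D_K$ that have an immediate successor $x^+$ in $K_0$, where it tends to $x^+$. A monotone-subsequence argument shows that, for fixed $f$, only countably many such $x$ satisfy $f(x^+,y)\neq f(x,y)$ for some $y$, and similarly in $L_0$. Hence the exceptional set lies in $(C\times L_0)\cup(K_0\times C')$, with $C\subset K_0\setminus D_K$ and $C'\subset L_0\setminus D_L$ countable. This set is $|\mu|$-null once $D_K$ and $D_L$ contain the atoms of the respective marginals of $|\mu|$.
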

\begin{proof}
We use the identification $(C(K)\widehat\otimes_\varepsilon C(L))^*\cong \mathcal{M}(K\times L)$. Let $\mu\in B_{\mathcal{M}(K\times L)}$ be arbitrary. It follows from \cite[p.~86]{Mercourakis} (see \cite[Lemma~3.4]{Korpalski}), that there exists a separable set $S\subset K\times L$ such that
$|\mu|(S)=|\mu|(K\times L)$. Set $K_0=\overline{\pi_K(S)}$ and $L_0=\overline{\pi_L(S)}$.

It follows that $K_0$ and $L_0$ are separable compact subspaces of the compact
lines $K$ and $L$, respectively. Since $S\subset K_0\times L_0$, we obtain
\[
|\mu|(K_0\times L_0)=|\mu|(K\times L).
\]

Let $\nu$ be the restriction of $\mu$ to $K_0\times L_0$. Then $\nu\in B_{\mathcal{M}(K_0\times L_0)}$. By Lemma~\ref{Lem:Auxweakstar}, there is a sequence $(\nu_n)_n$ of finitely supported measures on $K_0\times L_0$,
\[
\nu_n=\sum_{j=1}^{m_n} a_{n,j}\delta_{(s_{n,j},t_{n,j})},
\]
such that $\sum_{j=1}^{m_n}|a_{n,j}|\leq 1$ for each $n\in \mathbb{N}$, and $\nu_n\xrightarrow{w^*}\nu$ in $\mathcal{M}(K_0\times L_0)$.

Viewing each $\nu_n$ as a measure $\mu_n$ on $K\times L$ supported on
$K_0\times L_0$, we have $\mu_n\xrightarrow{w^*}\mu$ in $\mathcal{M}(K\times L)$. 

Finally, since for each $(s,t)\in K\times L$, $\delta_{(s,t)}=\widehat G(\delta_s\otimes\delta_t)$,
for each $n \in \mathbb{N}$, let
\[
\eta_n=\sum_{j=1}^{m_n} a_{n,j}\delta_{s_{n,j}}\otimes \delta_{t_{n,j}}.
\]
Clearly, $\eta_n\in (C(K)^*\otimes C(L)^*)\cap B_{C(K)^*\widehat\otimes_\pi C(L)^*}$ for each $n\in \mathbb{N}$.
Since $\widehat G(\eta_n)=\mu_n$, we obtain $\widehat G(\eta_n)\xrightarrow{w^*}\mu$. This completes the proof.
\end{proof}

The next step is to identify conditions ensuring that
$\Delta_{\mathfrak I}(C(K))$ has the approximation property.
Although $\Delta_{\mathfrak I}(C(K))$ is a subspace of
$C(K)^{**}$ and the latter always has (AP), it is well known that the approximation property is not inherited by arbitrary subspaces. Consequently, additional structure is needed in order to guarantee that
$\Delta_{\mathfrak I}(C(K))$ retains (AP).

To this end, we first associate to a testing ideal a natural ideal of subsets of $K$, allowing us to express properties of $\mathfrak I(C(K)^*)$ in topological terms. We then introduce a compatibility condition between the testing ideal $\mathfrak I$ and the underlying compact space $K$.

\begin{definition}
Let $\mathfrak I$ be a testing ideal and let $K$ be a compact Hausdorff space. Define $\mathcal I_{\mathfrak I}(K)=\bigl\{
A\subset K:\{\delta_a:a\in A\}\in \mathfrak I(C(K)^*)\bigr\}$. We say that the compact space $K$ is \emph{$\mathfrak I$-compatible} if, for every
$B\in \mathfrak I(C(K)^*)$, there exists $A\in \mathcal I_{\mathfrak I}(K)$ such that
\[
|\mu|(\overline A)=|\mu|(K)
\]
for every $\mu\in B$.
\end{definition}

\begin{remark}\label{Rem:ExamplesCompatible}Recalling the examples from Remark~\ref{Rem:ExamplesAssignments}, every scattered compact Hausdorff space and every compact line is $\mathfrak I_\kappa$-compatible for every infinite cardinal $\kappa$. Moreover, every scattered compact Hausdorff space and every compact line is
$\mathfrak I_{<\kappa}$-compatible whenever $\kappa$ is an uncountable regular cardinal.

Indeed, let $K$ be a scattered compact Hausdorff space and let
$B\in\mathfrak I(C(K)^*)$. Since $C(K)^*\cong \ell_1(K)$, every measure
$\mu\in B$ has countable support. Therefore, if
\[
A=\bigcup_{\mu\in B}\operatorname{supp}(\mu),
\]
then $A\in\mathcal I_{\mathfrak I}(K)$ and
$|\mu|(\overline A)=|\mu|(K)$ for every $\mu\in B$.

Now suppose that $K$ is a compact line. By \cite[p.~86]{Mercourakis},
every measure $\mu\in C(K)^*$ admits a separable set
$S_\mu\subset K$ of full $|\mu|$-measure. If
$\mathfrak I=\mathfrak I_\kappa$, choose a countable dense subset
$D_\mu\subset S_\mu$ for each $\mu\in B$, and define
\[
A=\bigcup_{\mu\in B}D_\mu.
\]
Since $|B|\le\kappa$ and each $D_\mu$ is countable, it follows that
$|A|\le\kappa$, and therefore $A\in\mathcal I_\kappa(K)$.
Moreover, $S_\mu\subset \overline{D_\mu}\subset \overline A$,
so that $|\mu|(\overline A)=|\mu|(K)$ for every $\mu\in B$.
The argument for $\mathfrak I_{<\kappa}$ is analogous, using the regularity of
$\kappa$.
\end{remark}

The following Proposition was inspired by \cite[Theorem 3.3]{Korpalski}.

\begin{proposition}\label{Prop:Fecho}
Let $\mathfrak I$ be a testing ideal and let $K$ be a $\mathfrak I$-compatible compact Hausdorff space. For each
$\Phi\in C(K)^{**}$, define $g_\Phi:K\to \mathbb{R}$ by $g_\Phi(t)=\Phi(\delta_t)$.
Then
\[\Delta_{\mathfrak I}(C(K))=\left\{\Phi\in C(K)^{**}:g_\Phi|_{\overline A}\in C(\overline A)\text{ for every }A\in\mathcal I_{\mathfrak I}(K)\right\}.
\]
\end{proposition}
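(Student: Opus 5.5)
The plan is to write $\mathcal R$ for the right-hand side and prove $\Delta_{\mathfrak I}^0(C(K))\subset\mathcal R$, that $\mathcal R$ is norm closed, and that $\mathcal R\subset\Delta_{\mathfrak I}^0(C(K))$. Together these give $\Delta_{\mathfrak I}^0(C(K))=\mathcal R=\Delta_{\mathfrak I}(C(K))$.

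\emph{Closedness.} Since $\|\delta_t\|=1$, we have $\|g_\Phi-g_\Psi\|_\infty\le\|\Phi-\Psi\|$. Hence if $\Phi_n\to\Phi$ in norm, then $g_{\Phi_n}|_{\overline A}\to g_\Phi|_{\overline A}$ uniformly. A uniform limit of continuous functions is continuous, so $\mathcal R$ is closed.

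\emph{$\Delta^0_{\mathfrak I}\subset\mathcal R$.} Let $\Phi\in\Delta^0_{\mathfrak I}(C(K))$ and $A\in\mathcal I_{\mathfrak I}(K)$. Then $\{\delta_a:a\in A\}\in\mathfrak I(C(K)^*)$, so there is $f\in C(K)$ with $g_\Phi=f$ on $A$. To pass to $\overline A$, fix $t\in\overline A$. The set $\{\delta_a:a\in A\}\cup\{\delta_t\}$ lies in $\mathfrak I(C(K)^*)$, because ideals are closed under finite unions and contain countable sets. This yields $f_t\in C(K)$ with $g_\Phi=f_t$ on $A\cup\{t\}$. Since $f_t=f$ on $A$, continuity gives $f_t=f$ on $\overline A$. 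Therefore $g_\Phi(t)=f_t(t)=f(t)$, so $g_\Phi|_{\overline A}=f|_{\overline A}$ is continuous.

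\emph{$\mathcal R\subset\Delta^0_{\mathfrak I}$.} Let $\Phi\in\mathcal R$ and $B\in\mathfrak I(C(K)^*)$. By $\mathfrak I$-compatibility there is $A\in\mathcal I_{\mathfrak I}(K)$ such that every $\mu\in B$ is concentrated on $F=\overline A$. Since $g_\Phi|_F$ is continuous, Tietze's theorem extends it to some $f\in C(K)$. It then remains to show $\Phi(\mu)=\int f\,d\mu$ for all $\mu\in B$. Since $\mu$ lives on $F$, where $f=g_\Phi$, this reduces to the identity
\[
\Phi(\mu)=\int_F g_\Phi\,d\mu\qquad(\mu\in\mathcal M(K),\ |\mu|(K\setminus F)=0).
\]

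\emph{Main obstacle: proving this identity.} In the scattered case it is immediate. Under $C(K)^*=\ell_1(K)$ and $C(K)^{**}=\ell_\infty(K)$, the element $\Phi$ is the bounded function $g_\Phi$. Each $\mu=\sum_t a_t\delta_t$ then gives $\Phi(\mu)=\sum_t a_t g_\Phi(t)$, which is the required identity.

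For general $K$ (in particular compact lines), the identity does not follow from the hypotheses alone. An element of $C(K)^{**}$ need not be determined by its values on point masses. For example, take $K=[0,1]$ and let $\Phi(\mu)$ be the total mass of the diffuse part of $\mu$. Then $g_\Phi\equiv0$, so $\Phi\in\mathcal R$. However, $\Phi(\lambda)=1$ for Lebesgue measure $\lambda$, while every $f\in C(K)$ vanishing on a countable dense set is identically zero. So $\Phi\notin\Delta_{\mathfrak I_\omega}(C(K))$.

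I would therefore first try to add the hypothesis that $\Phi$ is weak$^*$-sequentially continuous on $\mathcal M(F)$, which holds for $\Phi\in\Delta_{\mathfrak s}$. With it, one approximates $\mu$ weak$^*$ by finitely supported $\nu_n$ on $F$, as in Lemma~\ref{Lem:Auxweakstar} for separable compact lines, and passes to the limit in $\Phi(\nu_n)=\int g_\Phi\,d\nu_n$.

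Without such a hypothesis, I expect the statement to hold as written only for scattered $K$. For compact lines the right-hand side should be intersected with $\Delta_{\mathfrak s}(C(K))$, in the spirit of \cite[Theorem 3.3]{Korpalski}.
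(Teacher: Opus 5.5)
Your analysis is correct, and the paper's proof has exactly the gap you describe. Where the statement is true, your argument coincides with the paper's. The inclusion $\Delta^0_{\mathfrak I}(C(K))\subset\mathcal R$ is proved as there: adjoin $\delta_t$ to $\{\delta_a:a\in A\}$ and compare the two continuous interpolants on $\overline A$. The reverse inclusion goes through compatibility and Tietze, and you also justify the norm-closedness of $\mathcal R$, which the paper only asserts. The identity you single out is precisely the first equality $\Phi(\mu)=\int_K g_\Phi\,d\mu$ in the paper's chain, and the paper gives no justification for it. It holds when $K$ is scattered, since then $\mu=\sum_t a_t\delta_t$ converges in norm in $\ell_1(K)$; this is the case used in Remark~\ref{Rem:Fecho}. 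It fails for general $\Phi\in C(K)^{**}$. Your diffuse-mass functional on $[0,1]$ refutes the proposition itself, not just the method. Indeed, $[0,1]$ is $\mathfrak I$-compatible for \emph{every} testing ideal, because a countable dense $A$ is admissible by condition (1) of Definition~\ref{Def:Test}. And $g_\Phi\equiv 0$ puts $\Phi$ in $\mathcal R$.

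One step should be tightened. Testing on $\{\delta_q:q\in\mathbb Q\cap[0,1]\}\cup\{\lambda\}$ only shows $\Phi\notin\Delta^0_{\mathfrak I}(C[0,1])$, whereas the proposition concerns the norm closure. There are two easy fixes.
\begin{itemize}
\item $C[0,1]$ is separable, hence has the Mazur property, so $\Delta_{\mathfrak I}(C[0,1])\subset\Delta_{\mathfrak s}(C[0,1])=C[0,1]$ by Proposition~\ref{Prop:Bidual}.
\item Quantitatively: if $\Psi\in\Delta^0_{\mathfrak I}$ with $\|\Psi-\Phi\|<1/3$, the interpolating $f\in C[0,1]$ satisfies $\|f\|_\infty\le 1/3$. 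Hence $|\Psi(\lambda)|=|\lambda(f)|\le 1/3$, whereas $|\Psi(\lambda)|>2/3$.
\end{itemize}

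The quantitative version also confirms your expectation in general. Take a normalized diffuse measure $\nu$, which exists on any non-scattered compact space, and a Hahn--Banach functional vanishing on the atomic measures with value $1$ at $\nu$. Let $A$ be supplied by compatibility for $B=\{\nu\}$, and run the same estimate. This shows that, for $\mathfrak I$-compatible $K$, the stated equality holds if and only if $K$ is scattered.

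The paper's main applications (compact trees and ordinal intervals) are scattered, so they are unaffected. However, the proofs of Theorem~\ref{Thm:ScatteredRepresentation} and Corollary~\ref{Cor:AP} for non-scattered $K$, such as compact lines, rest on this proposition and need another argument.

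Your repair, $\Delta_{\mathfrak I}(C(K))=\mathcal R\cap\Delta_{\mathfrak s}(C(K))$ for compact lines, works with one adjustment. For uncountable $\kappa$ the set $F=\overline A$ need not be separable. So approximate each $\mu\in B$ by finitely supported measures on a separable closed subset of $F$ carrying $|\mu|$, such as $\overline{D_\mu}$ from Remark~\ref{Rem:ExamplesCompatible}, rather than on $F$ itself.
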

\begin{proof}
Let $\Phi \in \Delta_{\mathfrak I}^0(C(K))$ (see Definition \ref{Def:TestDelta}) and let $A\in \mathcal I_{\mathfrak I}(K)$ be arbitrary. We claim that $g_\Phi|_{\overline A}$ is continuous. Indeed, since $\{\delta_a:a\in A\}\in \mathfrak{I}(C(K)^*)$, there exists $f\in C(K)$ such that $g_\Phi(a)=f(a)$ for every $a \in A$. 

Now let $x \in \overline{A}$ be arbitrary. Since $\mathfrak I(C(K)^*)$ is an ideal, the set $\{\delta_a:a\in A\}\cup \{\delta_x\}$ also belongs to $\mathfrak I(C(K)^*)$. Hence there exists a function $h\in C(K)$ such that $g_\Phi(x)=h(x)$ and $g_\Phi(a)=h(a)$ for every $a \in A$.  Since $f$ and $h$ are continuous on $\overline{A}$ and coincide on $A$, we have $f|_{\overline{A}}=h|_{\overline{A}}$. Then $g|_\Phi(x)=h(x)=f(x)$, and we deduce that $g_\Phi|_{\overline{A}}=f|_{\overline{A}}$. This establishes our claim and hence $g_\Phi|_{\overline{A}}\in C(\overline{A})$.

Conversely, let $\Phi\in C(K)^{**}$ be such that
$g_\Phi|_{\overline A}\in C(\overline A)$ for every
$A\in\mathcal I_{\mathfrak I}(K)$. Let
$B\in\mathfrak I(C(K)^*)$ be arbitrary. By compatibility, there exists
$A\in\mathcal I_{\mathfrak I}(K)$ such that
\[
|\mu|(\overline A)=|\mu|(K).
\]
for every $\mu \in B$. By Tietze extension theorem, choose $g\in C(K)$ extending $g_\Phi|_{\overline A}$. Then, for every
$\mu\in B$,
\[
\Phi(\mu)
=
\int_K g_\Phi\,d\mu
=
\int_{\overline A} g_\Phi\,d\mu
=
\int_{\overline A} g\,d\mu
=
\mu(g).
\]
Hence $\Phi|_B=g|_B$, and therefore $\Phi\in\Delta_{\mathfrak I}^0(C(K))$.

We deduce that
\[\Delta_{\mathfrak I}^0(C(K))=\left\{\Phi\in C(K)^{**}:g_\Phi|_{\overline A}\in C(\overline A)
\text{ for every }A\in\mathcal I_{\mathfrak I}(K)\right\}.\]
Since the set on the right-hand side is norm closed in $C(K)^{**}$, it follows from the definition of $\Delta_{\mathfrak I}(C(K))$ that the conclusion holds.
\end{proof}

\begin{remark}\label{Rem:Fecho}
In the applications of Proposition~\ref{Prop:Fecho}, it will often be more convenient to work with the associated functions $g_\Phi(t)=\Phi(\delta_t)$,
rather than with the corresponding elements $\Phi\in C(K)^{**}$.

In the particular case when $K$ is a scattered compact Hausdorff space,
the canonical identification
$C(K)^{**}\cong \ell_\infty(K)$
shows that Proposition~\ref{Prop:Fecho} reduces to a concrete description in terms of bounded functions. Combining Proposition~\ref{Prop:Fecho} with Remark~\ref{Rem:ExamplesCompatible}, we obtain the following formulas for the testing ideals $\mathfrak I_\kappa$ and $\mathfrak I_{<\kappa}$.

For every infinite cardinal $\kappa$,
\[
\Delta_{\kappa}(C(K))
=
\Bigl\{
f\in \ell_\infty(K):
f|_{\overline A}\in C(\overline A)
\text{ whenever }
A\subset K
\text{ and }
|A|\le \kappa
\Bigr\}.
\]

Likewise, for every uncountable regular cardinal $\kappa$,
\[
\Delta_{<\kappa}(C(K))
=
\Bigl\{
f\in \ell_\infty(K):
f|_{\overline A}\in C(\overline A)
\text{ whenever }
A\subset K
\text{ and }
|A|<\kappa
\Bigr\}.
\]
\end{remark}

\begin{theorem}\label{Thm:ScatteredRepresentation}
Let $\mathfrak I$ be a testing ideal and let $\Delta_{\mathfrak I}$ be the associated ideal-generated assignment. For every $\mathfrak I$-compatible compact Hausdorff space $K$, there exists a compact Hausdorff space $L$ such that $\Delta_{\mathfrak I}(C(K))
\cong C(L)$ isometrically as Banach lattices. Consequently, there exists a continuous surjection $\pi:L\to K$.
\end{theorem}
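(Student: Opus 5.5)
Our plan is to show that $\Delta_{\mathfrak I}(C(K))$ is a closed unital sublattice and subalgebra of $C(K)^{**}$. We then invoke the Kakutani/Gelfand representation. Recall that $C(K)^{**}$ is an AM-space with unit, isometrically lattice (and algebra) isomorphic to some $C(\widetilde K)$ (\cite{Kakutani}). A norm-closed subspace of $C(\widetilde K)$ that contains the constants and is closed under the lattice operations (or under products) is, by Kakutani's representation theorem for AM-spaces with unit (equivalently, by Gelfand--Naimark for closed unital subalgebras), isometrically lattice isomorphic to $C(L)$ for some compact Hausdorff $L$.

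The key step uses Proposition~\ref{Prop:Fecho}. Since $K$ is $\mathfrak I$-compatible, we have
\[
\Delta_{\mathfrak I}(C(K))=\left\{\Phi\in C(K)^{**}:g_\Phi|_{\overline A}\in C(\overline A)\text{ for every }A\in\mathcal I_{\mathfrak I}(K)\right\}.
\]
We would check that the assignment $\Phi\mapsto g_\Phi$ is compatible with the lattice and algebra operations of $C(K)^{**}$, that is, $g_{\Phi\vee\Psi}=g_\Phi\vee g_\Psi$ and $g_{\Phi\Psi}=g_\Phi g_\Psi$. This holds because evaluation at a Dirac measure $\delta_t$ is a lattice homomorphism and a multiplicative functional on $C(K)^{**}$. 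Indeed, $\delta_t$ is an atom of $\mathcal M(K)$, so the band it generates is one-dimensional and the band projection onto it is a lattice homomorphism. Concretely, under the Arens product the point $\delta_t$ of $\widetilde K$ is an isolated point, and $\Phi\mapsto\Phi(\delta_t)$ is evaluation there. Given this, the continuity of the restrictions to each $\overline A$ is preserved by $\vee$, $\wedge$ and products, so the right-hand side is a sublattice. It contains $1$ because $g_1\equiv 1$, and it is norm closed. Hence it is a closed unital sublattice of $C(\widetilde K)$, and Kakutani's theorem yields $L$. I expect this compatibility verification to be the main obstacle. It requires being careful that the lattice structure of $C(K)^{**}$ as the dual of the AL-space $\mathcal M(K)$ agrees, on atoms, with pointwise operations on $g_\Phi$. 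The cleanest route is to verify directly that for $\Phi,\Psi\ge0$ one has $(\Phi\wedge\Psi)(\delta_t)=\inf\{\Phi(\mu)+\Psi(\delta_t-\mu):0\le\mu\le\delta_t\}=\min(\Phi(\delta_t),\Psi(\delta_t))$, using that $0\le\mu\le\delta_t$ forces $\mu=s\delta_t$ with $s\in[0,1]$.

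For the final claim, $C(K)\subset\Delta_{\mathfrak I}(C(K))\cong C(L)$ is a closed unital sublattice (equivalently, subalgebra), and the inclusion is an isometric unital lattice embedding $C(K)\hookrightarrow C(L)$. We would compose each point evaluation $\delta_\ell$, $\ell\in L$, with this embedding. The result is a unital lattice homomorphism $C(K)\to\mathbb R$, hence equal to $\delta_{\pi(\ell)}$ for a unique $\pi(\ell)\in K$. This defines $\pi:L\to K$, which is weak$^*$-continuous, hence continuous. Surjectivity follows since the embedding is injective: if $\pi(L)$ missed a point, Urysohn would give a nonzero $f\in C(K)$ vanishing on the closed set $\pi(L)$, whose image in $C(L)$ would then be zero.
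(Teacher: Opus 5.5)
Your proposal is correct and follows essentially the paper's proof: you use Proposition~\ref{Prop:Fecho} and the compatibility of $\Phi\mapsto g_\Phi$ with the lattice operations to see that $\Delta_{\mathfrak I}(C(K))$ is a closed unital sublattice of $C(K)^{**}$, apply Kakutani's representation theorem, define $\pi$ through point evaluations on $L$, and get surjectivity from Urysohn's lemma and the injectivity of $J$. Your Riesz--Kantorovich check that evaluation at the atom $\delta_t$ is a lattice homomorphism fills in a step the paper only asserts ($g_{|\Phi|}=|g_\Phi|$), and so does your use of the fact that unital lattice homomorphisms $C(K)\to\mathbb R$ are Dirac functionals, which replaces the paper's unexplained multiplicativity claim $\varphi_y(f^2)=\varphi_y(f)^2$.
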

\begin{proof}
According to Proposition \ref{Prop:Fecho} we can write
\[\Delta_{\mathfrak I}(C(K))=\left\{\Phi\in C(K)^{**}:g_\Phi|_{\overline A}\in C(\overline A)\text{ for every }A\in\mathcal I_{\mathfrak I}(K)\right\}.
\]
For every $\Phi\in C(K)^{**}$, if $|\Phi|$ denotes the lattice modulus of
$\Phi$, then $g_{|\Phi|}=|g_\Phi|$. It follows that $\Delta_{\mathfrak I}(C(K))$ is a closed sublattice of $C(K)^{**}$
containing the constant functions. Hence it is an AM-space with unit under the inherited lattice structure; see \cite{Kakutani}. By Kakutani's
representation theorem \cite[Theorem 1]{Kakutani}, there exists a compact Hausdorff space $L$ such that $\Delta_{\mathfrak I}(C(K))\cong C(L)$
isometrically as Banach lattices. Let $J:\Delta_{\mathfrak I}(C(K))\to C(L)$ be a unital isometric lattice isomorphism.

Now, since $C(K)\subset \Delta_{\mathfrak I}(C(K))$, the restriction $J|_{C(K)}:C(K)\to C(L)$ is a unital isometric lattice embedding. For each
$y\in L$, the functional $\varphi_y\in C(K)^*$ given by $\varphi_y(f)=J(f)(y)$
is positive, satisfies $\varphi_y(\mathbf 1)=1$, and $\varphi_y(f^2)=\varphi_y(f)^2$
for every $f\in C(K)$. Let $\mu\in \mathcal M(K)$ be the Radon measure
associated with $\varphi_y$. Then
\[
\int_K \left(f-\int_K f\,d\mu\right)^2\,d\mu=0
\]
for every $f\in C(K)$. It follows that $\mu$ is supported at a single point.
Hence there exists a unique $x_y\in K$ such that $\mu=\delta_{x_y}$, and
therefore $J(f)(y)=f(x_y)$ for every $f\in C(K)$. Hence, we may define a function $\pi:L\to K$ by $\pi(y)=x_y$. This function satisfies 
\[
J(f)(y)=f(\pi(y))
\]
for each $y \in L$ and $f\in C(K)$. Consequently, $\pi$ is continuous. To prove that $\pi$ is surjective assume, towards a contradiction, that $\pi(L)\neq K$. Since $\pi(L)$ is compact, there exists $f\in C(K)$, $f\neq 0$, such that $f|_{\pi(L)}=0$. Hence $J(f)=f\circ\pi=0$, contradicting the injectivity of $J$.
\end{proof}

We are now in a position to verify the approximation property for the spaces $\Delta_{\mathfrak I}(C(K))$ associated with $\mathfrak I$-compatible compacta.

\begin{corollary}\label{Cor:AP}
Let $\mathfrak I$ be a testing ideal and let $\Delta_{\mathfrak I}$ be the associated ideal-generated assignment. If $K$ is an $\mathfrak I$-compatible compact Hausdorff space, then $\Delta_{\mathfrak I}(C(K))$ has (AP).
\end{corollary}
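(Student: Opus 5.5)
The plan is to reduce the statement directly to Theorem~\ref{Thm:ScatteredRepresentation}. For an $\mathfrak I$-compatible compact Hausdorff space $K$, that theorem gives a compact Hausdorff space $L$ such that $\Delta_{\mathfrak I}(C(K))\cong C(L)$ isometrically, even as Banach lattices. The approximation property is an isomorphic (indeed isometric) invariant: if $J:Z\to W$ is an isomorphism and $W$ has (AP), then for every compact $C\subset Z$ and $\varepsilon>0$ we take a finite-rank $S$ on $W$ approximating the identity on $J(C)$ within $\varepsilon/\|J^{-1}\|$. Then $J^{-1}SJ$ is a finite-rank operator on $Z$ approximating the identity on $C$ within $\varepsilon$. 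So it suffices to know that every $C(L)$ has (AP).

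That $C(L)$ has (AP) for every compact Hausdorff $L$ is classical; in fact $C(L)$ has the metric approximation property. I would either cite this or sketch it. Given a compact set $C\subset C(L)$ and $\varepsilon>0$, choose a finite $\varepsilon$-net $f_1,\dots,f_m$ of $C$. By equicontinuity of $\{f_1,\dots,f_m\}$ and compactness of $L$, pick a finite open cover $U_1,\dots,U_N$ of $L$ on which each $f_i$ oscillates by less than $\varepsilon$. Choose a subordinate partition of unity $(\phi_k)$ and points $t_k\in U_k$. Then the operator
\[
Sf=\sum_{k=1}^N f(t_k)\,\phi_k
\]
has finite rank and norm $1$, and satisfies $\|Sf_i-f_i\|\le\varepsilon$. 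Hence $\|Sf-f\|\le 3\varepsilon$ on $C$.

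Combining these facts with Theorem~\ref{Thm:ScatteredRepresentation} gives the corollary. There is no real obstacle here: the substantive work, namely identifying $\Delta_{\mathfrak I}(C(K))$ as a closed unital sublattice via Proposition~\ref{Prop:Fecho} and applying Kakutani's theorem, has already been done. The only point needing care is that this representation requires $\mathfrak I$-compatibility, which is exactly the hypothesis of the corollary.
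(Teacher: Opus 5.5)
Your proposal is correct and follows exactly the paper's route: the corollary is recorded without separate proof as an immediate consequence of Theorem~\ref{Thm:ScatteredRepresentation}, via the facts that every $C(L)$ has (AP) (which the paper also uses for $C(K)^{**}$) and that (AP) passes through isomorphisms; your partition-of-unity sketch simply makes the first fact self-contained. One caveat, which you inherit from the paper rather than introduce: the sublattice step relies on Proposition~\ref{Prop:Fecho}, whose identity $\Phi(\mu)=\int_K g_\Phi\,d\mu$ holds automatically only for scattered $K$ (for $K=[0,1]$, the functional $\mu\mapsto\mu_c([0,1])$, with $\mu_c$ the non-atomic part of $\mu$, satisfies the condition there but is not in $\Delta_{\mathfrak I}(C(K))$), although for $\Phi\in\Delta^0_{\mathfrak I}(C(K))$ the identity does follow from compatibility, so the lattice argument and your reduction survive.
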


The representation provided by Theorem~\ref{Thm:ScatteredRepresentation} motivates the following definition.

\begin{definition}
Let $\mathfrak I$ be a testing ideal and let
$\Delta_{\mathfrak I}$ be the associated ideal-generated assignment.
Let $K$ be an $\mathfrak I$-compatible compact Hausdorff space.
A compact Hausdorff space $L$ is called a
\emph{$\Delta_{\mathfrak I}$-spectrum} of $K$ if $\Delta_{\mathfrak I}(C(K))\cong C(L)$ isometrically as Banach lattices.
\end{definition}

The final step, and perhaps the most delicate one, is to determine when $C(K)$ is complemented in $\Delta_{\mathfrak I}(C(K))$ for an $\mathfrak I$-compatible compact Hausdorff space $K$. This question is motivated by Theorem~\ref{Thm:ProductRule}, whose applicability depends on the existence of such a complementing projection.

The ideas underlying the construction below first appeared in \cite{CandidoSquares} in the context of ordinal intervals and were substantially refined in \cite{Korpalski} for arbitrary compact lines. In the latter work, it is shown in \cite[Lemma~4.4]{Korpalski} that, for every infinite cardinal $\kappa$ and every compact line $K$, the space $C(K)$ is complemented in $\Delta_\kappa(C(K))$, where $\Delta_\kappa$ denotes the Korpalski assignment. We shall adapt
this approach to the more general framework of testing ideals.

By Theorem~\ref{Thm:ScatteredRepresentation}, every $\mathfrak I$-compatible compact Hausdorff space $K$
admits a $\Delta_{\mathfrak I}$-spectrum $L$. In what follows, we shall use the function representation from Remark~\ref{Rem:Fecho}.
Under the corresponding lattice isomorphism $J:\Delta_{\mathfrak I}(C(K))\to C(L)$
there exists a continuous surjection $\pi:L\to K$ such that the restriction
$J|_{C(K)}$ coincides with the composition operator
$f\mapsto f\circ\pi$.
Therefore, to prove that $C(K)$ is complemented in
$\Delta_{\mathfrak I}(C(K))$, it suffices to construct a bounded linear operator
$A:C(L)\to C(K)$ satisfying
\[
A(f\circ\pi)=f
\]
for every $f\in C(K)$.

Following \cite[Corollary~4.3]{Korpalski}, we work with the function representation of $\Delta_{\mathfrak I}(C(K))$ and regard its elements as functions on $K$.
Suppose that there exists a dense set $D\subset K$ such that, for every
$f\in\Delta_{\mathfrak I}(C(K))$,
we have
\[
\operatorname{osc}_D f(x)
=
\inf\left\{
\sup_{y,z\in V\cap D}|f(y)-f(z)|
:\ V\ni x \text{ open}
\right\}
=0.
\]
for each $x\in K$. Then $f|_D$ has a unique continuous extension
$\widetilde{f|_D}$ to $K$. Hence the operator $A:\Delta_{\mathfrak{I}}(C(K))\to C(K)$ given by the formula
\[
A(f)=\widetilde{f|_D}
\]
is a projection and satisfies $\|A\|\leq 1$.

\begin{proposition}\label{Prop:ComplementationC0}
Let $\mathfrak I$ be a testing ideal and let $K$ be an
$\mathfrak I$-compatible compact Hausdorff space.
Suppose that there exists a dense subset $D\subset K$ such that
$\operatorname{osc}_D f(x)=0$ for every
$f\in\Delta_{\mathfrak I}(C(K))$ and every $x\in K$. Let $L$ be a $\Delta_{\mathfrak I}$-spectrum of $K$.
Then there exists an open locally compact subspace $U\subset L$ such that
\[
\mathcal S_{\mathfrak I}(C(K))
\sim
C_0(U).
\]
\end{proposition}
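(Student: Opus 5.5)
Using the projection $A:\Delta_{\mathfrak I}(C(K))\to C(K)$, $A(f)=\widetilde{f|_D}$, constructed just before the statement, I will identify its kernel and then transport the picture to $L$. Since $A$ is a norm-one projection onto $C(K)$, we have $\Delta_{\mathfrak I}(C(K))=C(K)\oplus\ker A$. Hence $\mathcal S_{\mathfrak I}(C(K))=\Delta_{\mathfrak I}(C(K))/C(K)\sim\ker A$; the quotient map restricted to $\ker A$ is an isomorphism with constants controlled by $\|A\|\le 1$. So it suffices to show that $\ker A$ is isometric to some $C_0(U)$.

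Next I describe $\ker A$. In the function representation of Remark~\ref{Rem:Fecho}, $f\in\ker A$ means $\widetilde{f|_D}=0$, that is, $f|_D=0$. Let $J:\Delta_{\mathfrak I}(C(K))\to C(L)$ be the lattice isometry of Theorem~\ref{Thm:ScatteredRepresentation}, with $J(g)=g\circ\pi$ for $g\in C(K)$.

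Since $\ker A$ is a closed lattice ideal, I expect to show that $\ker A=\{f: f|_D=0\}$ is a closed ideal: if $|h|\le|f|$ and $f|_D=0$, then $h|_D=0$, and closedness follows because $A$ is bounded. Closed ideals of $C(L)$ are exactly of the form $\{F\in C(L):F|_F=0\}$ for a closed set, which I write as $I_Z=\{F\in C(L):F|_Z=0\}$ for a closed $Z\subset L$. Thus $J(\ker A)=I_Z$ with $Z=\bigcap_{f\in\ker A}J(f)^{-1}(0)$. Setting $U=L\setminus Z$, which is open and hence locally compact, the restriction map $F\mapsto F|_U$ is an isometric isomorphism of $I_Z$ onto $C_0(U)$. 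This is the standard identification, obtained via Tietze extension on the one-point compactification, or equivalently via $I_Z\cong C_0(L\setminus Z)$. Chaining these isomorphisms gives $\mathcal S_{\mathfrak I}(C(K))\sim\ker A\cong I_Z\cong C_0(U)$.

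The only real obstacle is the ideal claim, namely that $|h|\le|f|$ in $\Delta_{\mathfrak I}(C(K))$ passes to the functions $g_h,g_f$ on $K$. This is handled by the identity $g_{|\Phi|}=|g_\Phi|$ used in the proof of Theorem~\ref{Thm:ScatteredRepresentation}, together with positivity of evaluation at $\delta_t$. Therefore $|g_h(t)|\le|g_f(t)|=0$ for $t\in D$. I also note that the ideal/closed-set correspondence in $C(L)$ is classical: given an ideal $I$, take $Z$ to be the common zero set of $I$ and use a partition-of-unity argument to show $I_Z\subset\overline I$.
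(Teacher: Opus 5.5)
Your proof is correct and follows essentially the same route as the paper. You split off $\ker A$ using the projection $A(f)=\widetilde{f|_D}$, show $\ker A$ is a closed lattice ideal, transport it by $J$ to a closed ideal $\{h\in C(L):h|_Z=0\}$ of $C(L)$, and identify that ideal with $C_0(L\setminus Z)$. The only cosmetic difference is that the paper gets the ideal property from $A(|f|)=|A(f)|$, so $A$ is a lattice homomorphism, whereas you verify solidity of $\{f: f|_D=0\}$ directly; both arguments rest on the same identity $g_{|\Phi|}=|g_\Phi|$.
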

\begin{proof}
By the assumption, for every
$f\in\Delta_{\mathfrak I}(C(K))$, the restriction
$f|_D$ admits a unique continuous extension $\widetilde{f|_D}$ to $K$. Define
$A:\Delta_{\mathfrak I}(C(K))\to C(K)$ by $A(f)=\widetilde{f|_D}$.
As seen above, $A$ is a linear projection with $\|A\|\leq 1$.

Now observe that, for every $f\in\Delta_{\mathfrak I}(C(K))$, we have
\[A(|f|)=\widetilde{(|f|)|_D}=|\widetilde{f|_D}|=|A(f)|.
\]
Since $A$ is linear, it follows that $A$ is a lattice homomorphism. Therefore, $\ker A$ is a closed ideal of
$\Delta_{\mathfrak I}(C(K))$.

Let $J:\Delta_{\mathfrak I}(C(K))\to C(L)$ be the lattice isomorphism provided by
Theorem~\ref{Thm:ScatteredRepresentation}. Then $J(\ker A)$ is a closed ideal of $C(L)$. Hence there exists a closed subset $F\subset L$ such that
\[
J(\ker A)=\{h\in C(L):h|_F=0\}.
\]
Set $U=L\setminus F$. Then $U$ is an open locally compact Hausdorff space and, since $J$ is an isometric isomorphism, we have
\[\ker A\cong J(\ker A)\cong C_0(U).\]
Since $A$ is a projection onto $C(K)$, we have
\[
\mathcal S_{\mathfrak I}(C(K))=\Delta_{\mathfrak I}(C(K))/C(K) \sim \ker A \sim C_0(U).
\]
\end{proof}

The proof of \cite[Lemma 4.4]{Korpalski} for compact lines motivates the
following definition, which provides a sufficient condition for a compact
space to admit a dense subset $D\subset K$ such that
$\operatorname{osc}_D f(x)=0$ for every
$f\in\Delta_{\mathfrak I}(C(K))$ and every $x\in K$.
Recall that
\[
\mathcal I_{\mathfrak I}(K)
=
\{A\subset K:\{\delta_a:a\in A\}\in\mathfrak I(C(K)^*)\}.
\]

\begin{definition}\label{Def:Double}
Let $\mathfrak I$ be a testing ideal and let $K$ be a $\mathfrak{I}$-compatible compact Hausdorff space.
A subset $D\subset K$ is said to satisfy the
\emph{$\mathfrak I$-small double closure property} for $K$ if for every
$A,B\subset D$, whenever $x\in\overline A\cap\overline B$,
there exist subsets $A_0\subset A$ and $B_0\subset B$ such that $A_0\cup B_0\in\mathcal I_{\mathfrak I}(K)$
and $\overline{A_0}\cap\overline{B_0}\neq\varnothing$.
\end{definition}
\begin{lemma}
Let $\mathfrak I$ be a testing ideal and let $K$ be a $\mathfrak{I}$-compatible compact Hausdorff space.
 Assume that there is a dense subset $D\subset K$  satisfying the $\mathfrak{I}$-small double closure property. Then, for every $f\in \Delta_{\mathfrak{I}}(C(K))$ and every $x\in K$, we have $\operatorname{osc}_D f(x)=0$.
\end{lemma}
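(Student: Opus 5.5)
We argue by contradiction, using Proposition~\ref{Prop:Fecho} to pass from $f\in\Delta_{\mathfrak I}(C(K))$ to its function representation $g=g_f$ on $K$. Then $g|_{\overline{E}}\in C(\overline{E})$ for every $E\in\mathcal I_{\mathfrak I}(K)$.

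Suppose that $\operatorname{osc}_D g(x)=\varepsilon_0>0$ at some $x\in K$. For every open $V\ni x$ there are $y,z\in V\cap D$ with $|g(y)-g(z)|>\varepsilon_0/2$. We sort such points into two sets with separated values. Pick a real $c$ such that both
\[
A=\{y\in D: g(y)\ge c+\varepsilon_0/8\}\quad\text{and}\quad B=\{y\in D: g(y)\le c-\varepsilon_0/8\}
\]
accumulate at $x$. To find $c$, first choose a cluster value $a$ of $g$ along $D$ at $x$; such an $a$ exists because $g$ is bounded and the neighbourhood filter restricted to $D$ is a filter. Then set $c=a\pm\varepsilon_0/4$, choosing the sign so that the oscillation forces points on the far side. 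Concretely: if $x\notin\overline{\{y\in D:|g(y)-a|\ge \varepsilon_0/4\}}$, then near $x$ all values of $g$ on $D$ lie within $\varepsilon_0/4$ of $a$, so the oscillation is at most $\varepsilon_0/2$, a contradiction. Hence $x$ lies in the closure of points where $g\ge a+\varepsilon_0/4$ or in the closure of points where $g\le a-\varepsilon_0/4$, and also in the closure of points where $|g-a|<\varepsilon_0/8$ (since $a$ is a cluster value). This yields two subsets of $D$, with values separated by at least $\varepsilon_0/8$, both having $x$ in their closure. Call them $A$ and $B$.

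Now apply the $\mathfrak I$-small double closure property to $A,B\subset D$ and $x\in\overline A\cap\overline B$. We obtain $A_0\subset A$ and $B_0\subset B$ with $E=A_0\cup B_0\in\mathcal I_{\mathfrak I}(K)$ and a point $w\in\overline{A_0}\cap\overline{B_0}$. By Proposition~\ref{Prop:Fecho}, $g|_{\overline E}$ is continuous. Since $w\in\overline{A_0}$, continuity gives $g(w)$ as a limit of values at least the upper threshold; since $w\in\overline{B_0}$, it also gives $g(w)$ as a limit of values at most the lower threshold. These thresholds differ by at least $\varepsilon_0/8>0$, which is a contradiction. Hence $\operatorname{osc}_D g(x)=0$.

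The only delicate step is choosing the threshold so that both level sets genuinely accumulate at $x$; the cluster-value argument above handles this. We also note that the density of $D$ is not really needed for the oscillation statement itself. The description of $\Delta_{\mathfrak I}(C(K))$ in Proposition~\ref{Prop:Fecho} holds for all of $\Delta_{\mathfrak I}(C(K))$, not just $\Delta^0_{\mathfrak I}$, so no separate approximation step is required.
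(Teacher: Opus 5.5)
Your proof is correct and is essentially the paper's argument: produce two subsets of $D$ on which $f$ takes separated values and which both accumulate at $x$, apply the $\mathfrak I$-small double closure property, and contradict the continuity of $f|_{\overline{A_0\cup B_0}}$ from Proposition~\ref{Prop:Fecho} at a point of $\overline{A_0}\cap\overline{B_0}$. The paper picks $r<s$ strictly between the lower and upper limits of $f$ along $D$ at $x$ and sets $A=\{y\in D:f(y)<r\}$, $B=\{y\in D:f(y)>s\}$, which is quicker than your cluster-value construction; your preliminary thresholds $c\pm\varepsilon_0/8$ do not quite match the sets you end up using, but your final choice of $A$ and $B$ is fine.
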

\begin{proof}
Suppose, towards a contradiction, that there exist $f\in\Delta_{\mathfrak{I}}(C(K))$ and $x\in K$ such that $\operatorname{osc}_D f(x)>0$.
Then there are real numbers $r<s$ such that every neighborhood of $x$ intersects both sets
$A=\{y\in D:f(y)<r\}$ and $B=\{y\in D:f(y)>s\}$. Hence $x\in\overline A\cap\overline B$.
By the $\mathfrak{I}$-small double closure property, there exist $A_0\subset A$ and $B_0\subset B$ such that $A_0\cup B_0\in I_{\mathfrak I}(K)$ and $\overline{A_0}\cap\overline{B_0}\neq\emptyset$.
Since $f\in\Delta_{\mathfrak{I}}(C(K))$, recalling Proposition \ref{Prop:Fecho}, we have $f|_{\overline{A_0\cup B_0}}\in C(\overline{A_0\cup B_0})$.
If we choose $y\in \overline{A_0}\cap\overline{B_0}$, we obtain  $s \leq f(y)\leq r$ which is a contradiction. Therefore, $\operatorname{osc}_D f(x)=0$ for every $x\in K$.
\end{proof}

We conclude this section by deriving a product formula for finite products of Banach spaces, which will play a key role in the applications developed in the next section.

\begin{theorem}\label{Thm:MultipleDerivatives}
Let $\mathfrak I$ be a testing ideal and let $K_1,\ldots,K_n$ be $\mathfrak{I}$-compatible scattered compact Hausdorff spaces.
Suppose that $C(K_i)$ is complemented in $\Delta_{\mathfrak I}(C(K_i))$ for every $1\leq i \leq n$. Then
\[
\mathcal{S}_{\mathfrak{I}}\!\left(
C(K_1\times \ldots \times K_n)
\right)
\sim
\bigoplus_{\emptyset\neq A\subset \{1,\ldots,n\}}
\left(
\widehat{\bigotimes}_{i\in A}
\mathcal{S}_{\mathfrak{I}}(C(K_i))
\;\widehat{\otimes}_{\varepsilon}\;
\widehat{\bigotimes}_{i\notin A}
C(K_i)
\right).
\]
\end{theorem}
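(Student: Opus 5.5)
Our plan is to argue by induction on $n$, using Theorem~\ref{Thm:ProductRule} with $X=C(K_1\times\cdots\times K_{n-1})$ and $Y=C(K_n)$, together with the identification $C(K\times L)\cong C(K)\widehat\otimes_\varepsilon C(L)$. The case $n=1$ is trivial, since the only nonempty $A\subset\{1\}$ gives $\mathcal S_{\mathfrak I}(C(K_1))$. For the inductive step, set $K=K_1\times\cdots\times K_{n-1}$, which is again a scattered compact Hausdorff space. The essential work is to check that the pair $X=C(K)$, $Y=C(K_n)$ meets every hypothesis of Theorems~\ref{Thm:TensorEmbeddingIntersection} and~\ref{Thm:ProductRule}.

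We would verify the hypotheses in the following order.
\begin{itemize}
\item[(a)] $C(K)^{**}$ and $C(K_n)^{**}$ have (AP) by Kakutani's representation.
\item[(b)] By Proposition~\ref{Prop:CanonicalScattered}, $\widehat G$ is an isometric isomorphism onto $(C(K)\widehat\otimes_\varepsilon C(K_n))^*$, so it maps the unit ball onto the unit ball; weak$^*$-sequential density is then trivial.
\item[(c)] The dual balls are weak$^*$-sequentially compact because $K$ and $K_n$ are scattered.
\item[(d)] $\Delta_{\mathfrak I}(C(K_n))$ has (AP) by Corollary~\ref{Cor:AP}, since $K_n$ is $\mathfrak I$-compatible. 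This is why we put the factor $K_n$ in the $Y$ slot: (d) only needs one factor.
\end{itemize}
The remaining hypothesis is that $C(K)$ is complemented in $\Delta_{\mathfrak I}(C(K))$. This is the main obstacle, since the statement only assumes complementation for each $C(K_i)$ and says nothing about $\mathfrak I$-compatibility of the product.

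To handle this, we would strengthen the inductive statement. Besides the derivative formula, we would also carry the claim that $C(K_1\times\cdots\times K_m)$ is complemented in $\Delta_{\mathfrak I}(C(K_1\times\cdots\times K_m))$. Theorem~\ref{Thm:TensorEmbeddingIntersection} gives the isometric identification $J$ of $\Delta_{\mathfrak I}(C(K)\widehat\otimes_\varepsilon C(K_n))$ with $\Delta_{\mathfrak I}(C(K))\widehat\otimes_\varepsilon\Delta_{\mathfrak I}(C(K_n))$, with $J(u)=u$. The tensor product $P\otimes Q$ of the two projections is then a bounded projection onto $C(K)\widehat\otimes_\varepsilon C(K_n)$ by \cite[Proposition 3.2]{Ryan}, and composing with $J$ gives the complementation at the next level. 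Theorem~\ref{Thm:TensorEmbeddingIntersection} needs (AP) of only one of $\Delta_{\mathfrak I}(X)$, $\Delta_{\mathfrak I}(Y)$, and we supply it through $Y$. So $\mathfrak I$-compatibility of the product is never required, and complementation of $C(K)$ comes from the inductive hypothesis.

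Theorem~\ref{Thm:ProductRule} now gives
\[
\mathcal S_{\mathfrak I}(C(K\times K_n))\sim \bigl(C(K)\widehat\otimes_\varepsilon\mathcal S_{\mathfrak I}(C(K_n))\bigr)\oplus\bigl(\mathcal S_{\mathfrak I}(C(K))\widehat\otimes_\varepsilon C(K_n)\bigr)\oplus\bigl(\mathcal S_{\mathfrak I}(C(K))\widehat\otimes_\varepsilon\mathcal S_{\mathfrak I}(C(K_n))\bigr).
\]
We then substitute the inductive formula for $\mathcal S_{\mathfrak I}(C(K))$ and write $C(K)\cong\widehat\bigotimes_{i<n}C(K_i)$. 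Injective tensor products distribute over finite direct sums and respect isomorphisms, and they are associative and commutative up to isometry. So the three summands regroup as follows:
\begin{itemize}
\item the first gives the term $A=\{n\}$;
\item the second gives the terms $A\subset\{1,\dots,n-1\}$ with $A\neq\emptyset$;
\item the third gives the terms $A=A'\cup\{n\}$ with $A'\neq\emptyset$.
\end{itemize}
Together these exhaust all nonempty $A\subset\{1,\ldots,n\}$, which yields the formula and completes the induction.
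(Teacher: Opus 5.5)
Your proposal is correct and follows essentially the same route as the paper: induction via Theorem~\ref{Thm:ProductRule}, checking the hypotheses of Theorem~\ref{Thm:TensorEmbeddingIntersection} for the scattered partial product and the last factor, and obtaining complementation of the partial product from the isometry $J$ composed with $P\otimes Q$. You make explicit two points the paper leaves implicit: complementation of $C(K_1\times\cdots\times K_m)$ has to be carried along in the induction, and hypothesis (d) is supplied by Corollary~\ref{Cor:AP} applied to the single $\mathfrak I$-compatible factor $K_n$, so no $\mathfrak I$-compatibility of the product is needed.
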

\begin{proof}
We argue by induction on $n$. The case $n=1$ is trivial, while the case
$n=2$ is precisely Theorem~\ref{Thm:ProductRule}, noting that if $K_1$ and
$K_2$ are scattered, then $C(K_1)$ and $C(K_2)$ satisfy the hypotheses of
Theorem~\ref{Thm:TensorEmbeddingIntersection}.

Assume the formula holds for some $n\ge 2$ and set
\[
E=\widehat{\bigotimes}_{i=1}^{n}C(K_i).
\]
By the induction hypothesis,
\[
\mathcal S_{\mathfrak I}(E)
\sim
\bigoplus_{\emptyset\neq A\subset\{1,\ldots,n\}}
\left(
\widehat{\bigotimes}_{i\in A}
\mathcal S_{\mathfrak I}(C(K_i))
\widehat{\otimes}_\varepsilon
\widehat{\bigotimes}_{i\notin A}
C(K_i)
\right).
\]

Since $E$ is again of the form $C(L)$ for a scattered compact Hausdorff
space $L$, the results discussed in this section show that the hypotheses of
Theorem~\ref{Thm:TensorEmbeddingIntersection} are satisfied for $E$ and
$C(K_{n+1})$. Moreover, since $C(K_i)$ is complemented in
$\Delta_{\mathfrak I}(C(K_i))$ for each $1\le i\le n$, Theorem~\ref{Thm:TensorEmbeddingIntersection}
implies that $E$ is complemented in $\Delta_{\mathfrak I}(E)$.

Therefore, applying Theorem~\ref{Thm:ProductRule}, we obtain
\[
\mathcal S_{\mathfrak I}(E\widehat{\otimes}_\varepsilon C(K_{n+1}))
\sim \bigl(\mathcal S_{\mathfrak I}(E)\widehat{\otimes}_\varepsilon C(K_{n+1})\bigr)
\oplus \bigl(E\widehat{\otimes}_\varepsilon
\mathcal S_{\mathfrak I}(C(K_{n+1})) \bigr)\oplus
\bigl(\mathcal S_{\mathfrak I}(E) \widehat{\otimes}_\varepsilon
\mathcal S_{\mathfrak I}(C(K_{n+1}))\bigr).
\]
By using the induction hypothesis and collecting terms yields the stated formula for $m+1$ factors. This completes the proof.
\end{proof}

\begin{remark}
One may wonder whether Theorem~\ref{Thm:MultipleDerivatives} can be applied when
$K_1,\ldots,K_n$ are arbitrary compact lines. One missing ingredient is the weak$^*$-sequential compactness of the dual balls of the intermediate spaces $C(K_{i_1}\times\cdots\times K_{i_r})$, which is required for the applicability of Theorem~\ref{Thm:ProductRule}. Although we believe that this property holds, we were unable to find a proof.

Therefore, in the setting of arbitrary compact lines, Theorem~\ref{Thm:ProductRule} currently yields only the two-factor formula
\[
\mathcal S_{\mathfrak I}(C(K_1\times K_2))\sim C(K_1,\mathcal S_{\mathfrak I}(C(K_2)))\oplus
C(K_2,\mathcal S_{\mathfrak I}(C(K_1)))\oplus \bigl(S_{\mathfrak I}(C(K_1))\widehat \otimes_\varepsilon S_{\mathfrak I}(C(K_2))\bigr).
\]
\end{remark}

\section{Applications}
\label{Sec:Applications}

From a general perspective, one expects a derivative of a Banach space
$C(K)$ to produce a quotient
\[
\mathcal S_{\mathfrak I}(C(K))
=
\Delta_{\mathfrak I}(C(K))/C(K)
\]
whose structure is substantially simpler than that of the original space,
while still retaining meaningful information about the underlying compact
space $K$. Proposition~\ref{Prop:ComplementationC0} shows that, under suitable
compatibility assumptions, the study of this quotient reduces to the analysis
of the kernel of a natural projection. In particular, it may be represented as
a space of the form $C_0(U)$, where $U$ is an open locally compact subspace of
a $\Delta_{\mathfrak I}$-spectrum of $K$.

We now specialize the general theory developed above to the testing ideals
$\mathfrak I_\kappa$ and $\mathfrak I_{<\kappa}$ from
Remark~\ref{Rem:ExamplesAssignments}. These ideals provide the main source of
examples in the present paper and lead to explicit computations in the settings
of scattered compact spaces and compact lines.

A major motivation for this investigation comes from a theorem of
Korpalski~\cite{Korpalski}, who proved that for every infinite cardinal
$\kappa$ and every compact line $K$, the Semadeni--Pe{\l}czy\'nski derivative
satisfies
\[
SP_\kappa(C(K))
\sim c_0(\Gamma)
\]
for a suitable discrete space $\Gamma$.
In particular, $SP_\kappa(C(K))$ is a Mazur space.
This result extends earlier work of Semadeni~\cite{Se2} and
Kislyakov~\cite{Kislyakov}. One of our goals is to place these
derivatives within the general framework developed in the previous sections.

The ideal $\mathfrak I_\kappa$ is considered for arbitrary infinite cardinals
$\kappa$, whereas $\mathfrak I_{<\kappa}$ is considered only for uncountable
regular cardinals. Under these assumptions, Remark~\ref{Rem:ExamplesCompatible} shows that
both scattered compact Hausdorff spaces and compact lines are
$\mathfrak I_\kappa$-compatible and $\mathfrak I_{<\kappa}$-compatible.
Consequently, the theory developed in Section~\ref{Sec:DerivativesC(K,Y)}
applies to both classes.

Moreover, the ideal-generated assignment associated with
$\mathfrak I_\kappa$ coincides with Korpalski's assignment $\Delta_\kappa$,
and the corresponding derivative recovers the classical
Semadeni--Pe{\l}czy\'nski derivative.

For consistency of notation, we shall write
\[
\mathcal S_\kappa(X)=\mathcal S_{\mathfrak I_\kappa}(X)=\Delta_\kappa(X)/X.
\]
Likewise, for every uncountable regular cardinal $\kappa$, we set
\[
\mathcal S_{<\kappa}(X)
=
\mathcal S_{\mathfrak I_{<\kappa}}(X).
\]

Our first applications concern the behaviour of
$\Delta_{\mathfrak I}$-Mazur spaces under injective tensor products.
Theorem~\ref{Thm:DeltaInjectiveTensor} provides a mechanism through which a
$\Delta_{\mathfrak I}$-Mazur factor becomes ``invisible'' to the associated
ideal-generated assignment. The following corollary illustrates this
phenomenon for scattered compact spaces.

\begin{corollary}\label{Cor:ScatteredDelta}
Let $K$ be a scattered compact Hausdorff space, let $Y$ be a Banach space,
and let $\mathfrak I$ be a testing ideal with associated ideal-generated
assignment $\Delta_{\mathfrak I}$.
If $C(K)$ is a $\Delta_{\mathfrak I}$-Mazur space, then $\Delta_{\mathfrak I}(C(K,Y))\cong C\bigl(K,\Delta_{\mathfrak I}(Y)\bigr)$.
In particular, if $Y$ is also a $\Delta_{\mathfrak I}$-Mazur space, then
$C(K,Y)$ is a $\Delta_{\mathfrak I}$-Mazur space.
\end{corollary}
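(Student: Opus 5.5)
The plan is to apply Theorem~\ref{Thm:DeltaInjectiveTensor} with $X=C(K)$ and the given $Y$, together with the isometric identification $C(K)\widehat{\otimes}_\varepsilon Y\cong C(K,Y)$. To do so, I will check hypotheses (a)--(c) of that theorem and the additional (AP) assumption.

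Hypothesis (a) is exactly the assumption that $C(K)$ is a $\Delta_{\mathfrak I}$-Mazur space. For (b), Proposition~\ref{Prop:CanonicalScattered} shows that, since $K$ is scattered, $\widehat G:C(K)^*\widehat{\otimes}_\pi Y^*\to (C(K)\widehat{\otimes}_\varepsilon Y)^*$ is an isometric isomorphism, hence surjective. Then $X^*\otimes Y^*$ is norm-dense in $X^*\widehat{\otimes}_\pi Y^*$, so $\widehat G(X^*\otimes Y^*)$ is norm-dense in the dual. Norm density gives weak$^*$-sequential density, because every element is the norm limit, and hence the weak$^*$ limit, of a sequence from the subspace.

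For (c), I need the Gelfand--Phillips property of $C(K)$ for scattered $K$. I would argue that $B_{C(K)^*}$ is weak$^*$-sequentially compact, as recalled at the start of Section~\ref{Sec:DerivativesC(K,Y)} via Radon--Nikod\'ym compactness. It is a standard fact (see \cite{Fabian}) that a space whose dual ball is weak$^*$-sequentially compact has the Gelfand--Phillips property. The reason is that a limited set which is not relatively compact yields a $w^*$-null sequence of functionals witnessing non-compactness, obtained by extracting $w^*$-convergent subsequences of norming functionals and taking differences. Finally, $C(K)$ has (AP), which gives the extra hypothesis.

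Theorem~\ref{Thm:DeltaInjectiveTensor} then yields
\[
\Delta_{\mathfrak I}(C(K,Y))\cong\Delta_{\mathfrak I}(C(K)\widehat{\otimes}_\varepsilon Y)\cong C(K)\widehat{\otimes}_\varepsilon\Delta_{\mathfrak I}(Y)\cong C(K,\Delta_{\mathfrak I}(Y)).
\]
The first isometry transports $\Delta_{\mathfrak I}$ correctly by Theorem~\ref{Thm:DerivativeInvariant}, or directly by functoriality applied to the isometry and its inverse.

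For the last claim, suppose $\Delta_{\mathfrak I}(Y)=Y$. The isometry $T$ from Theorem~\ref{Thm:DeltaInjectiveTensor} fixes $C(K)\widehat{\otimes}_\varepsilon Y$ and maps onto $C(K)\widehat{\otimes}_\varepsilon Y$, so $\Delta_{\mathfrak I}(C(K,Y))=C(K,Y)$.

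The main obstacle I expect is justifying the Gelfand--Phillips property in (c). If the sequential-compactness route feels loose, the fallback is to cite directly that $C(K)$ has the Gelfand--Phillips property whenever $K$ is scattered, or more generally whenever $B_{C(K)^*}$ is weak$^*$-sequentially compact.
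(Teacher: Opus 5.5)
Your proposal is correct and follows the paper's proof. You verify the hypotheses of Theorem~\ref{Thm:DeltaInjectiveTensor} for $X=C(K)$ and then apply it, and your explicit treatment of the ``in particular'' clause, using that $T$ fixes $C(K)\widehat{\otimes}_\varepsilon Y$, is a detail the paper leaves implicit.
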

\begin{proof}
As observed in Section~\ref{Sec:DerivativesC(K,Y)}, the dual ball $B_{C(K)^*}$ is weak$^*$-sequentially compact.
Hence $C(K)$ has the Gelfand--Phillips property; see \cite[Chapter XIII, Exercise~4, p.~238]{Diestel}
(see also \cite{Gelfand1938}). Additionally, by Proposition~\ref{Prop:CanonicalScattered},
the set $\widehat G(C(K)^*\otimes Y^*)$
is norm dense, and hence weak$^*$-sequentially dense, in $(C(K)\widehat\otimes_\varepsilon Y)^*$.

Since $C(K)$ also has (AP) \cite[Example~4.2]{Ryan}, all the hypotheses of Theorem~\ref{Thm:DeltaInjectiveTensor} are satisfied. The conclusion follows.
\end{proof}

\begin{remark}\label{Rem:CompactLineApp}
The preceding corollary remains valid if the scattered compact space $K$ is
replaced by a compact line and $Y$ is replaced by $C(L)$, where $L$ is another
compact line. Indeed, the proof is analogous, using the fact that $C(K)$ has
the Gelfand--Phillips property (see Theorem~\ref{Thm:CompactLine}) and Proposition~\ref{Prop:CompactLinesAux}.
\end{remark}

Combining Corollary~\ref{Cor:ScatteredDelta} with
Theorems~\ref{Thm:IsometryDerivative} and~\ref{Thm:TensorQuotientDerivative},
we obtain the following formula for the
Semadeni--Pe{\l}czy\'nski derivative.

\begin{theorem}\label{Thm:ellpsumsofderivatives}
Assume that $p = 0$ or $1 < p < \infty$, and let $\{X_\alpha : \alpha < \xi\}$ be a family of Banach spaces having the Gelfand--Phillips property, and let $K$ be a scattered compact Hausdorff space such that $C(K)$ is complemented in $\Delta_\kappa(C(K))$. Then
\[\mathcal{S}_\kappa \big(\left(\bigoplus_{\alpha < \xi} C(K,X_\alpha)\right)_{\ell_p}\big)\sim \left(\bigoplus_{\alpha < \xi} \mathcal{S}_\kappa(C(K))\hat{\otimes}_{\varepsilon} X_\alpha\right)_{\ell_p}.\]
Under the same hypotheses, but for $p = 1$, the same result holds provided that $\xi$ is not a real-valued measurable cardinal.
\end{theorem}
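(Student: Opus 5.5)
The proof will combine three results in sequence: Theorem~\ref{Thm:ell_psum_Delta} (or its $c_0$/$\ell_1$ variants) to split $\Delta_\kappa$ of the sum into summands, Theorem~\ref{Thm:IsometryDerivative} to pass to derivatives of the summands, and Theorem~\ref{Thm:TensorQuotientDerivative} to compute each summand's derivative. Throughout, $\Delta_\kappa=\Delta_{\mathfrak I_\kappa}$ is an ideal-generated assignment, so it satisfies the Semadeni condition by Proposition~\ref{Prop:Bidual}.

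\textbf{Step 1: the sum.} Put $X=\bigl(\bigoplus_{\alpha<\xi}C(K,X_\alpha)\bigr)_{\ell_p}$. We need
\[
\Delta_\kappa(X)=\Bigl(\bigoplus_{\alpha<\xi}\Delta_\kappa(C(K,X_\alpha))\Bigr)_{\ell_p}.
\]
For $1<p<\infty$, the bidual of an $\ell_p$-sum is the $\ell_p$-sum of the biduals, so Theorem~\ref{Thm:ell_psum_Delta} applies directly. For $p=0$, we use the $c_0$-sum version recorded in the remark after that theorem, which is valid because of the Semadeni condition. For $p=1$ with $\xi$ not real-valued measurable, we use Theorem~\ref{Thm:ell_1sum_Delta}. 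Theorem~\ref{Thm:IsometryDerivative} then gives
\[
\mathcal S_\kappa(X)\cong\Bigl(\bigoplus_{\alpha<\xi}\mathcal S_\kappa(C(K,X_\alpha))\Bigr)_{\ell_p}.
\]

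\textbf{Step 2: each summand.} We want $\mathcal S_\kappa(C(K,X_\alpha))\sim \mathcal S_\kappa(C(K))\widehat\otimes_\varepsilon X_\alpha$. The factor that is assumed Gelfand--Phillips is $X_\alpha$, so Theorem~\ref{Thm:DeltaInjectiveTensor} will be applied with the roles $X:=X_\alpha$ and $Y:=C(K)$, using the symmetry $C(K)\widehat\otimes_\varepsilon X_\alpha\cong X_\alpha\widehat\otimes_\varepsilon C(K)$. Its hypotheses are checked as follows.
\begin{itemize}
\item[(c)] This is the Gelfand--Phillips assumption on $X_\alpha$.
\item[(b)] By Proposition~\ref{Prop:CanonicalScattered}, $\widehat G$ is an isometric isomorphism for scattered $K$, so $\widehat G(C(K)^*\otimes X_\alpha^*)$ is norm dense in the dual. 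Norm density gives weak$^*$-sequential density.
\item[(AP)] By Corollary~\ref{Cor:AP}, $\Delta_\kappa(C(K))$ has (AP), since scattered $K$ is $\mathfrak I_\kappa$-compatible by Remark~\ref{Rem:ExamplesCompatible}.
\item[(a)] This requires $X_\alpha$ to be $\Delta_\kappa$-Mazur.
\end{itemize}
Given these, Theorem~\ref{Thm:DeltaInjectiveTensor} yields the isometric isomorphism $T$. Since $C(K)$ is complemented in $\Delta_\kappa(C(K))$ by hypothesis, Theorem~\ref{Thm:TensorQuotientDerivative} gives
\[
\mathcal S_\kappa(X_\alpha\widehat\otimes_\varepsilon C(K))\sim X_\alpha\widehat\otimes_\varepsilon \mathcal S_\kappa(C(K)).
\]

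\textbf{Step 3: uniform constants.} Step 2 gives an isomorphism for each $\alpha$, and these must assemble into an isomorphism of the $\ell_p$-sums, so the constants must be bounded in $\alpha$. They are. The map $\widetilde T$ in Theorem~\ref{Thm:TensorQuotientDerivative} is isometric. The further isomorphism comes from $\operatorname{Id}_{X_\alpha}\otimes P$, where $P$ is one fixed projection $\Delta_\kappa(C(K))\to C(K)$, so its constant depends only on $\|P\|$. Hence the isomorphisms are uniform, their $\ell_p$-sum is an isomorphism, and the formula follows.

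\textbf{Main obstacle.} The difficulty is hypothesis (a), that $\Delta_\kappa(X_\alpha)=X_\alpha$. This is not stated explicitly in the theorem. I expect the intended reading to include the Mazur property for each $X_\alpha$ (as in the main application, where $X$ is a Mazur space). The Mazur property suffices, because $\Delta_\kappa$ satisfies the Semadeni condition and hence $\Delta_\kappa(X_\alpha)\subset\Delta_{\mathfrak s}(X_\alpha)=X_\alpha$. Without such an assumption, the factor $\mathcal S_\kappa(X_\alpha)$ would contribute additional terms and the formula as stated could fail. I would therefore state this assumption explicitly at the start of the proof.
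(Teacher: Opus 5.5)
Your proposal is correct and is essentially the paper's argument, which just combines the sum results with Theorem~\ref{Thm:IsometryDerivative}, the tensor result behind Corollary~\ref{Cor:ScatteredDelta} (namely Theorem~\ref{Thm:DeltaInjectiveTensor}, which, as you do, must be applied with the Gelfand--Phillips factor $X_\alpha$ in the first slot and $C(K)$ in the second), and Theorem~\ref{Thm:TensorQuotientDerivative}; your uniformity check in Step~3 supplies a point the paper leaves implicit. You are also right that the hypothesis $\Delta_\kappa(X_\alpha)=X_\alpha$ (for instance, the Mazur property of each $X_\alpha$) is missing from the statement and genuinely needed: take $K$ a point and a single summand $X_0=C([0,\kappa^+])$, which has the Gelfand--Phillips property. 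Then the right-hand side is $\{0\}$, while $\mathbf 1_{\{\kappa^+\}}$ shows $\mathcal S_\kappa(C([0,\kappa^+]))\neq\{0\}$.
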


\begin{remark}
Recalling the previous remark, Theorem~\ref{Thm:ellpsumsofderivatives} applies in particular whenever each $X_\alpha$ is of the form
$C(M_\alpha)$, where $M_\alpha$ is a compact line. Moreover, by \cite{Korpalski}, the space
$C(M_\alpha)$ is complemented in $\Delta_\kappa(C(M_\alpha))$ for every infinite cardinal $\kappa$. Consequently, the conclusion of the theorem holds for $\ell_p$-sums of spaces of the form $C(K,C(M_\alpha))$.
\end{remark}

For compact lines, Korpalski's description of
$\mathcal S_\kappa(C(L))$
leads to an explicit computation of the Semadeni--Pe{\l}czy\'nski derivative of spaces of the form $C(K\times L)$.
Combining  results from \cite{Korpalski} with Theorem~\ref{Thm:TensorQuotientDerivative}, we obtain the following corollary.

\begin{corollary}
Let $\kappa$ be an infinite cardinal, let $K$ be either a scattered compact
Hausdorff space or a compact line such that $\Delta_\kappa(C(K))=C(K)$, and let $L$ be a compact line. Then
\[
\mathcal{S}_\kappa(C(K\times L))
\sim c_0(\Gamma,C(K)),
\]
where $\Gamma=L^\uparrow$ is the discrete set associated with $L$ in \cite[Lemma~4.5]{Korpalski}.
\end{corollary}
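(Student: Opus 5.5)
We prove the corollary by applying Theorem~\ref{Thm:TensorQuotientDerivative} with $X=C(K)$ and $Y=C(L)$, and then substituting Korpalski's description of $\mathcal S_\kappa(C(L))$.

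First we check that the map $T$ of Theorem~\ref{Thm:DeltaInjectiveTensor} is an isometric isomorphism for $X=C(K)$ and $Y=C(L)$. The hypotheses are verified as follows.
\begin{enumerate}
\item Hypothesis (a) is the assumption $\Delta_\kappa(C(K))=C(K)$.
\item For hypothesis (b), if $K$ is scattered, Proposition~\ref{Prop:CanonicalScattered} gives that $\widehat G$ is onto $(C(K)\widehat\otimes_\varepsilon C(L))^*$. If $K$ is a compact line, Proposition~\ref{Prop:CompactLinesAux} gives weak$^*$-sequential density of $\widehat G(C(K)^*\otimes C(L)^*)$, since $L$ is also a compact line.
\item For hypothesis (c), $C(K)$ has the Gelfand--Phillips property. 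In the scattered case this follows from weak$^*$-sequential compactness of $B_{C(K)^*}$, as in the proof of Corollary~\ref{Cor:ScatteredDelta}. In the compact-line case it is Theorem~\ref{Thm:CompactLine}.
\item Finally, $C(K)$ has (AP), so $T$ is onto.
\end{enumerate}
This is exactly the content of Corollary~\ref{Cor:ScatteredDelta} and Remark~\ref{Rem:CompactLineApp}. Hence $\Delta_\kappa(C(K\times L))\cong C(K)\widehat\otimes_\varepsilon\Delta_\kappa(C(L))$ via $T$, with $T(u)=u$ on $C(K\times L)$.

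Next we need $C(L)$ to be complemented in $\Delta_\kappa(C(L))$. This holds by \cite[Lemma~4.4]{Korpalski}, since $L$ is a compact line. Theorem~\ref{Thm:TensorQuotientDerivative} then yields
\[
\mathcal S_\kappa(C(K\times L))\sim C(K)\widehat\otimes_\varepsilon \mathcal S_\kappa(C(L)).
\]

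It remains to use Korpalski's theorem $\mathcal S_\kappa(C(L))\sim c_0(\Gamma)$ with $\Gamma=L^\uparrow$ as in \cite[Lemma~4.5]{Korpalski}. Injective tensor products respect isomorphisms, so
\[
C(K)\widehat\otimes_\varepsilon \mathcal S_\kappa(C(L))\sim C(K)\widehat\otimes_\varepsilon c_0(\Gamma)\cong c_0(\Gamma,C(K)).
\]
The last identification is the standard isometry $c_0(\Gamma)\widehat\otimes_\varepsilon Z\cong c_0(\Gamma,Z)$, which comes from $c_0(\Gamma)=C_0(\Gamma)$ with $\Gamma$ discrete.

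The main obstacle is the compact-line case for $K$. There, hypothesis (b) of Theorem~\ref{Thm:DeltaInjectiveTensor} is available only because $Y=C(L)$ is itself a compact-line space, through Proposition~\ref{Prop:CompactLinesAux}. We should make sure that weak$^*$-sequential density of the image of the unit ball implies weak$^*$-sequential density of $\widehat G(X^*\otimes Y^*)$ in the whole dual; this follows by scaling. The remaining steps are bookkeeping.
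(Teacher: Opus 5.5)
Your proposal is correct and follows essentially the same route as the paper. Like the paper, you verify the hypotheses of Theorem~\ref{Thm:DeltaInjectiveTensor} via Corollary~\ref{Cor:ScatteredDelta} or Remark~\ref{Rem:CompactLineApp}, then use Korpalski's complementation of $C(L)$ in $\Delta_\kappa(C(L))$ to apply Theorem~\ref{Thm:TensorQuotientDerivative}, and finally substitute $\mathcal S_\kappa(C(L))\cong c_0(\Gamma)$ together with $C(K)\widehat\otimes_\varepsilon c_0(\Gamma)\cong c_0(\Gamma,C(K))$. Your explicit hypothesis checks, including the scaling argument passing from the unit ball in Proposition~\ref{Prop:CompactLinesAux} to the whole dual, only spell out what the paper leaves implicit.
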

\begin{proof}
From \cite[Lemma 4.2]{Korpalski} we obtain that $C(L)$ is complemented in $\Delta_\kappa(C(L))$. Then, by Corollary \ref{Cor:ScatteredDelta} (or Remark~\ref{Rem:CompactLineApp}) and Theorem~\ref{Thm:TensorQuotientDerivative} yield
\[\mathcal{S}_\kappa(C(K\times L))
\sim
C(K)\widehat{\otimes}_\varepsilon
\mathcal{S}_\kappa(C(L)).
\]
By \cite[Lemma~4.5]{Korpalski}, $\mathcal{S}_\kappa(C(L)) \cong c_0(\Gamma)$, where $\Gamma=L^\uparrow$. Therefore,
\[
\mathcal{S}_\kappa(C(K\times L)) \sim C(K)\widehat{\otimes}_\varepsilon c_0(\Gamma)\sim c_0(\Gamma,C(K)).
\]
\end{proof}

We now turn to the main class of examples considered in this paper. The first
step is to show that ordinal intervals, and more generally compact trees,
provide a rich source of scattered compact spaces to which the results
developed in the previous sections apply.

\begin{theorem}\label{Thm:CompactTrees}
Let $K$ be a compact tree and let $\kappa$ be an uncountable regular cardinal.
Define $\Lambda_\kappa(K)=\{x\in K:\chi(K,x)\geq \kappa\}$. Then
\[
\mathcal S_{<\kappa}(C(K))
\sim
c_0(\Lambda_\kappa(K)).
\]
\end{theorem}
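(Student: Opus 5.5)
The plan is to use the concrete description of $\Delta_{<\kappa}(C(K))$ from Remark~\ref{Rem:Fecho}. A compact tree is scattered and, by Remark~\ref{Rem:ExamplesCompatible}, $\mathfrak I_{<\kappa}$-compatible, so
\[
\Delta_{<\kappa}(C(K))=\{f\in\ell_\infty(K): f|_{\overline A}\in C(\overline A)\ \text{whenever } |A|<\kappa\}.
\]
I will build an explicit norm-one projection $P$ of $\Delta_{<\kappa}(C(K))$ onto $C(K)$ and identify $\ker P$ with $c_0(\Lambda_\kappa(K))$. This is the same mechanism as Proposition~\ref{Prop:ComplementationC0}, but done by hand, since $D$ cannot simply be taken to be the set of points of small character.

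\textbf{Step 1: local structure of the tree.} Basic neighbourhoods of a point $x$ of limit height have the form $(y,x]$, which are chains. Hence $\chi(K,x)=\operatorname{cf}(\operatorname{ht}(x))$ for such $x$, and isolated points have character $1$. Moreover, $x\in\overline A\setminus A$ if and only if $A\cap\operatorname{pred}(x)$ is cofinal in $\operatorname{pred}(x)$. Consequences:
\begin{itemize}
\item[(i)] If $x\in\Lambda_\kappa(K)$ and $|A|<\kappa$, then $x\notin\overline{A\setminus\{x\}}$, because $\kappa$ is regular and $\operatorname{cf}(\operatorname{ht}(x))\ge\kappa$.
\item[(ii)] If $\chi(K,x)<\kappa$, there is a cofinal set $C_x\subset\operatorname{pred}(x)$ with $|C_x|<\kappa$. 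Then $f\in\Delta_{<\kappa}(C(K))$ forces continuity of $f$ at $x$: apply the defining condition to $C_x$, and to $C_x\cup\{z\}$ for points $z$ near $x$.
\end{itemize}

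\textbf{Step 2: limits at points of $\Lambda_\kappa(K)$ exist.} Let $f\in\Delta_{<\kappa}(C(K))$ and $x\in\Lambda_\kappa(K)$. I claim $\lim_{y\to x,\,y<x}f(y)$ exists. Suppose not. Then there are $r<s$ such that both $\{y<x: f(y)<r\}$ and $\{y<x: f(y)>s\}$ are cofinal in $\operatorname{pred}(x)$. Choose an increasing $\omega$-sequence in $\operatorname{pred}(x)$ alternating between the two sets. Since $\operatorname{cf}(\operatorname{ht}(x))>\omega$, its supremum $z$ satisfies $z<x$. The sequence is a countable set $A$ with $z\in\overline A$, and $f|_{\overline A}$ is not continuous at $z$, a contradiction. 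I expect this to be the main point of the argument; it is where uncountability and regularity of $\kappa$ enter.

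Define $Pf(x)=f(x)$ for $x\notin\Lambda_\kappa(K)$, and $Pf(x)=\lim_{y\to x,\,y<x}f(y)$ for $x\in\Lambda_\kappa(K)$. Then $Pf$ is continuous. At points of small character this follows from (ii). At points of $\Lambda_\kappa(K)$ it holds because the neighbourhoods there are chains $(y,x]$ on which $Pf$ agrees with $f$ below $x$. Clearly $P$ is linear, $\|P\|\le1$, and $Pf=f$ for $f\in C(K)$. Hence $\mathcal S_{<\kappa}(C(K))\sim\ker P$.

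\textbf{Step 3: $\ker P\cong c_0(\Lambda_\kappa(K))$.} If $g\in\ker P$, then $g$ vanishes off $\Lambda_\kappa(K)$. For $\varepsilon>0$, let $F_\varepsilon=\{|g|\ge\varepsilon\}\subset\Lambda_\kappa(K)$. I claim $F_\varepsilon$ has no accumulation point $z\in K$. If it did, then by Step 1 some countable $A\subset F_\varepsilon$ would accumulate at a point $z$ of countable cofinality (take a limit of an $\omega$-sequence along a chain, or use that $z$ itself has small character). Then $g(z)=0$, since $z\notin\Lambda_\kappa(K)$, contradicting continuity of $g|_{\overline A}$. So $F_\varepsilon$ is closed and discrete in the compact space $K$, hence finite, and $g|_{\Lambda_\kappa(K)}\in c_0(\Lambda_\kappa(K))$.

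Conversely, let $g$ be supported on $\Lambda_\kappa(K)$ with $g|_{\Lambda_\kappa(K)}\in c_0$. Take any $A$ with $|A|<\kappa$. By (i), each point of $\Lambda_\kappa(K)\cap\overline A$ is isolated in $\overline A$, so $g|_{\overline A}$ is continuous at those points. At every other point of $\overline A$, $g$ vanishes, and $g$ tends to $0$ there because each $F_\varepsilon$ is finite. Hence $g\in\Delta_{<\kappa}(C(K))$ and $Pg=0$.

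The map $g\mapsto g|_{\Lambda_\kappa(K)}$ is therefore an isometry of $\ker P$ onto $c_0(\Lambda_\kappa(K))$, which gives
\[
\mathcal S_{<\kappa}(C(K))\sim c_0(\Lambda_\kappa(K)).
\]
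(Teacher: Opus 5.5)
Your proof is correct and is essentially the paper's argument written out by hand. The paper takes $D=K\setminus\Lambda_\kappa(K)$, so contrary to your remark this choice of $D$ does work. It checks the $\mathfrak I_{<\kappa}$-small double closure property with the same alternating $\omega$-sequence argument as your Step~2, and the resulting projection $f\mapsto\widetilde{f|_D}$ coincides with your $P$. It then identifies the kernel with $c_0(\Lambda_\kappa(K))$ exactly as in your Step~3.

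Two justifications need a one-line repair:
\begin{enumerate}
\item $Pf$ need not agree with $f$ on $(y,x)$ when $\Lambda_\kappa(K)$ is cofinal below $x$ (e.g.\ $x=\omega_2$ in $[0,\omega_2]$ with $\kappa=\omega_1$). Continuity of $Pf$ still holds, because each $Pf(z)$ with $z\in(y,x)$ is a limit of values $f(w)$ with $w\in(y,z)$.
\item In (ii), applying the defining condition to $C_x\cup\{z\}$ for single points $z$ gives nothing. You must apply it to a set of fewer than $\kappa$ ``bad'' points that is cofinal in $\operatorname{pred}(x)$.
\end{enumerate}
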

\begin{proof}
We set $D=K\setminus \Lambda_\kappa(K)$. Clearly, $D$ is dense in $K$, since it contains all isolated points of the tree. We prove that $D$ satisfies the $\mathfrak{I}_{<\kappa}$-small double closure property for $K$, see Definition \ref{Def:Double}.

Indeed, let $A,B\subset D$ be arbitrary subsets and assume that
$x\in \overline A\cap \overline B$. If $x\in D$, fix a local basis
$\mathcal B_x$ at $x$ with $|\mathcal B_x|<\kappa$. For each
$V\in \mathcal B_x$, choose points $a_V\in V\cap A$ and $b_V\in V\cap B$. Then
$A_0=\{a_V:V\in\mathcal B_x\}$ and $B_0=\{b_V:V\in\mathcal B_x\}$ satisfy $|A_0\cup B_0|<\kappa$ and
$x\in\overline{A_0}\cap\overline{B_0}$.

Assume now that $x\notin D$. Since $A,B\subset D$, we have $x\notin A\cup B$.
Thus $A\cap \operatorname{pred}(x)$ and $B\cap \operatorname{pred}(x)$ are
cofinal in $\operatorname{pred}(x)$. Hence we may construct, by induction,
sequences $(a_n)_n$ in $A\cap \operatorname{pred}(x)$ and $(b_n)_n$ in
$B\cap \operatorname{pred}(x)$ such that
\[
a_1<b_1<a_2<b_2<\cdots .
\]
Let $y=\sup_{n\in\mathbb N}a_n=\sup_{n\in\mathbb N}b_n$, which exists by compactness of the tree. Define $A_0=\{a_n:n\in\mathbb N\}$ and  $B_0=\{b_n:n\in\mathbb N\}$. Then $|A_0\cup B_0|=\omega<\kappa$ and $y\in \overline{A_0}\cap \overline{B_0}$. Thus $D$ satisfies the $\mathfrak I_{<\kappa}$-small double closure property.

Since $D$ satisfies the $\mathfrak I_{<\kappa}$-small double closure property,
Proposition~\ref{Prop:ComplementationC0} yields a bounded projection $A:\Delta_{<\kappa}(C(K))\to C(K)$,
given by $A(f)=\widetilde{f|_D}$,
where $\widetilde{f|_D}$ denotes the unique continuous extension of
$f|_D$ to $K$. Moreover, there exists an open subset
$U\subset \Delta_{<\kappa}\text{-}\operatorname{Spec}(K)$ such that
\[\mathcal S_{<\kappa}(C(K))\sim\ker A\sim C_0(U).\]

We prove now that $C_0(U)\sim c_0(K\setminus D)$. Consider the function
$R:\Delta_{<\kappa}(C(K))\to \Delta_{<\kappa}(C(K))$ defined by $R(h)=h-A(h)$. Since 
$A$ is a projection, $R$ is also a projection and $\operatorname{Im}(R)=\ker{A}$. Let $h \in  \Delta_{<\kappa}(C(K))$ be arbitrary and let 
\[
E_\varepsilon(h)=\{x\in K:|R(h)(x)|\geq \varepsilon\}.
\]
We claim that $E_\varepsilon(h)$ is finite. Otherwise, there exists an infinite countable subset $W\subset E_\varepsilon(h)$. Let $w$ be an accumulation point of $W$. Since $|W|=\omega< \kappa$ we obtain that $R(h)|_{\overline{W}}$ is continuous (Proposition \ref{Prop:Fecho} and Remark \ref{Rem:Fecho}) and therefore, $|R(h)(w)|\geq \varepsilon$. If $\operatorname{pred}(w)\cap W$ is cofinal in $\operatorname{pred}(w)$, then $\operatorname{cf}(\operatorname{ht}(w))\leq\omega$. Since $\chi(K,w)=\operatorname{cf}(\operatorname{ht}(w))$, we obtain $\chi(K,w)<\kappa$, and thus $w\in D$. Consequently $R(h)(w)=0$ which is a contradiction. Otherwise, there exist $v<w$ such that $(v,w]\cap W\subset \{w\}$, this implies that $w$ is not an accumulation point of $W$, again a contradiction. This establishes our claim.

By defining the map $\Phi:\ker A \to c_0(K\setminus D)$ by the formula
\[\Phi(h)=h|_{K\setminus D},\]
we obtain a bounded injective operator. 

To prove that $\Phi$ is also surjective, we let $x\in K\setminus D$ be arbitrary and let $\Omega\subset K$, such that $|\Omega|<\kappa$ and $x\in \overline{\Omega}$. Since $\chi(K,x)\geq \kappa$ we have that $\Omega\cap\operatorname{pred}(x)$ is not cofinal in $\operatorname{pred}(x)$. Hence there exists $y<x$ such that $(y,x]\cap \Omega= \{x\}$. Therefore, $x$ is isolated in $\overline{\Omega}$ whence $\textbf{1}_{\{x\}}|_{\overline{\Omega}}\in C(\overline{\Omega})$. From Proposition \ref{Prop:Fecho} we deduce that $\textbf{1}_{\{x\}}\in \Delta_{<\kappa}(C(K))$ for each $x\in K\setminus D$. Thus
\[c_0(K\setminus D)\cong \overline{\operatorname{span}{\{\textbf{1}_{\{x\}}:x\in K\setminus D\}}} \subset \ker A\]
 and this completes the proof that $\Phi$ is an isomorphism.
\end{proof}

\begin{remark}\label{Rem:Kislyakov}
Motivated by \cite[Corollary~4.1]{Kislyakov}, in the particular case
$K=[0,\alpha]$ we write
\[
\Lambda_\kappa(\alpha)
=
\Lambda_\kappa([0,\alpha]).
\]
For ordinal intervals, this notation coincides with the set considered by
Kislyakov, namely the set of points that are not accumulation points of any
subset of cardinality strictly smaller than $\kappa$.
\end{remark}

For every Banach space $X$, we set $\mathcal S_{<\kappa}^{(0)}(X)=X$,
and define recursively for each $n \in \mathbb{N}$, \[\mathcal S_{<\kappa}^{(n+1)}(X)=\mathcal S_{<\kappa}\bigl(\mathcal S_{<\kappa}^{(n)}(X)\bigr).\]

By Theorem~\ref{Thm:CompactTrees}, whenever $K$ is a compact tree, $\mathcal S_{<\kappa}(C(K))\sim
c_0(\Lambda_\kappa(K))$. In particular, $\mathcal S_{<\kappa}(C(K))$ is a Mazur space and therefore
\[
\mathcal S_{<\kappa}^{(2)}(C(K))
=
\{0\}.
\]

The next lemma exploits this observation together with the product rule,
Theorem~\ref{Thm:ProductRule} (and Theorem~\ref{Thm:MultipleDerivatives})
to obtain an explicit description of the iterated
$\mathcal S_{<\kappa}$-derivatives of finite products of compact trees.

\begin{lemma}\label{Lem:DerivativeNtimes}
Let $K_1,\ldots,K_n$ be compact trees and let $\kappa$ be an uncountable regular cardinal, and let $X$ be a Mazur space having the Gelfand--Phillips property. Then
\[
\mathcal{S}_{<\kappa}^{(n)}(C(K_1\times \cdots \times K_n),X) \sim
c_0\bigl(\{1,\ldots,n!\}\times \Lambda_\kappa(K_1)\times \cdots \times\Lambda_\kappa(K_n),X\bigr).
\]
In particular, if $\Lambda_\kappa(K_1)\times \cdots \times\Lambda_\kappa(K_n)$ is infinite, then
\[
\mathcal{S}_{<\kappa}^{(n)}(C(K_1\times \cdots \times K_n),X)
\sim c_0(\Lambda_\kappa(K_1)\times \cdots \times\Lambda_\kappa(K_n),X).
\]
\end{lemma}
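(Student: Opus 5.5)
We will read the statement as $\mathcal S^{(n)}_{<\kappa}(C(K_1\times\cdots\times K_n,X))$ and argue by induction on $n$, working throughout with the space $C(K_1\times\cdots\times K_n)\widehat\otimes_\varepsilon X$ via the identification $C(K,X)\cong C(K)\widehat\otimes_\varepsilon X$.

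\textbf{Step 1: the scalar derivative.} First we compute $\mathcal S_{<\kappa}(C(K_1\times\cdots\times K_n))$. Each $K_i$ is a scattered compact tree, and scattered compacta are $\mathfrak I_{<\kappa}$-compatible by Remark~\ref{Rem:ExamplesCompatible}. By the proof of Theorem~\ref{Thm:CompactTrees}, $C(K_i)$ is complemented in $\Delta_{<\kappa}(C(K_i))$ and $\mathcal S_{<\kappa}(C(K_i))\sim c_0(\Lambda_i)$, where $\Lambda_i=\Lambda_\kappa(K_i)$. Theorem~\ref{Thm:MultipleDerivatives} then gives
\[
\mathcal S_{<\kappa}(C(K_1\times\cdots\times K_n))\sim\bigoplus_{\emptyset\neq A}\Bigl(\widehat{\bigotimes}_{i\in A}c_0(\Lambda_i)\,\widehat\otimes_\varepsilon\, C\bigl(\textstyle\prod_{i\notin A}K_i\bigr)\Bigr)\sim\bigoplus_{\emptyset\ne A}c_0\Bigl(\textstyle\prod_{i\in A}\Lambda_i,\;C\bigl(\prod_{i\notin A}K_i\bigr)\Bigr).
\]

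\textbf{Step 2: tensoring with $X$.} We pass to the vector-valued case by tensoring with $X$. Here we need $\mathcal S_{<\kappa}(C(K)\widehat\otimes_\varepsilon X)\sim\mathcal S_{<\kappa}(C(K))\widehat\otimes_\varepsilon X$ for $K$ a finite product of trees. We use Theorem~\ref{Thm:DeltaInjectiveTensor} with $X$ as the Mazur factor: $X$ is Mazur, hence $\Delta_{<\kappa}$-Mazur, and has the Gelfand--Phillips property. Hypothesis (b) follows from Proposition~\ref{Prop:CanonicalScattered}, which gives norm density of $\widehat G(X^*\otimes C(K)^*)$. Theorem~\ref{Thm:ScatteredRepresentation} shows $\Delta_{<\kappa}(C(K))\cong C(L)$, which has (AP). Complementation of $C(K)$ in $\Delta_{<\kappa}(C(K))$ comes from the tensor-product projections in the proof of Theorem~\ref{Thm:ProductRule}. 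Theorem~\ref{Thm:TensorQuotientDerivative} then yields
\[
\mathcal S_{<\kappa}(C(K,X))\sim\bigoplus_{\emptyset\ne A}c_0\Bigl(\textstyle\prod_{i\in A}\Lambda_i,\;C\bigl(\prod_{i\notin A}K_i,X\bigr)\Bigr).
\]

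\textbf{Step 3: iterating.} A summand with $A=\{1,\dots,n\}$ is $c_0(\prod\Lambda_i,X)$. This space is Mazur, since $X$ is Mazur and $c_0$-sums commute with $\Delta_{\mathfrak s}$ by the remark after Theorem~\ref{Thm:ell_psum_Delta}, so its further derivatives vanish. Every other summand is $c_0(\Gamma,Z)$ with $Z=C(K',X)$, where $K'$ is a product of fewer trees. By that same remark together with Theorem~\ref{Thm:IsometryDerivative} for $p=0$, and Proposition~\ref{Prop:Bidual} for the Semadeni condition, $\mathcal S_{<\kappa}(c_0(\Gamma,Z))\cong c_0(\Gamma,\mathcal S_{<\kappa}(Z))$. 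We then apply the induction hypothesis to $Z$, in its form for all lower derivatives. The combinatorial claim to prove by induction is
\[
\mathcal S^{(j)}_{<\kappa}(C(K_1\times\cdots\times K_n,X))\sim\bigoplus_{\sigma}c_0\Bigl(\textstyle\prod_{i\in A_\sigma}\Lambda_i,\;C\bigl(\prod_{i\notin A_\sigma}K_i,X\bigr)\Bigr),
\]
where $\sigma$ ranges over chains $\emptyset\ne B_1\subsetneq\cdots\subsetneq B_j=A_\sigma$, i.e.\ ordered partitions of $A_\sigma$ into $j$ nonempty blocks. For $j=n$ only $A_\sigma=\{1,\dots,n\}$ survives, with partitions into singletons. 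There are exactly $n!$ of them, which gives $c_0(\{1,\dots,n!\}\times\prod\Lambda_i,X)$. When $\prod\Lambda_i$ is infinite, $\{1,\dots,n!\}\times\prod\Lambda_i$ has the same cardinality as $\prod\Lambda_i$, which proves the last assertion.

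The main obstacle is the bookkeeping in Step 3 together with the justification of complementation for the multi-factor products in Step 2. If the projection argument from Theorem~\ref{Thm:ProductRule} proves awkward there, we would fall back on Proposition~\ref{Prop:ComplementationC0} applied to the product compact itself.
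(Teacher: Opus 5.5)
Your argument is correct, and it uses the same ingredients as the paper's proof: Theorem~\ref{Thm:MultipleDerivatives} for the first derivative of a product of trees, Theorems~\ref{Thm:DeltaInjectiveTensor} and~\ref{Thm:TensorQuotientDerivative} with $X$ as the Mazur factor to pass to $C(K,X)$, the vanishing of the derivative of $c_0(\Gamma,X)$, and the count of ordered partitions of $\{1,\ldots,n\}$ into singletons.

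Where you differ is in how the iteration is organized. The paper reduces to the scalar case in a single step and then inducts on the derivative order $r$. It applies Theorem~\ref{Thm:MultipleDerivatives} directly to mixed tensor products $\widehat{\bigotimes}_{i}F_i^{\mathcal P}$ that contain factors $D_i\sim c_0(\Lambda_\kappa(K_i))$. This implicitly uses two facts: that such summands are again of the form $C(L)$ with $L$ scattered and $C(L)$ complemented in $\Delta_{<\kappa}(C(L))$, and that Theorem~\ref{Thm:TensorQuotientDerivative} can be iterated through the intermediate derivatives.

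You instead keep $X$ inside and write every summand as a $c_0$-sum $c_0(\Gamma,C(K',X))$. You push $\mathcal S_{<\kappa}$ through it using the $c_0$-remark after Theorem~\ref{Thm:ell_psum_Delta} together with Theorem~\ref{Thm:IsometryDerivative} for $p=0$. You then induct on the number of tree factors, with the convention that the sum over ordered partitions into $j>n$ blocks is empty.

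The price of your route is a double induction and somewhat heavier bookkeeping. The gain is that the tensor-product theorems are only ever invoked for $C(K')$ with $K'$ a finite product of trees, where every hypothesis has actually been checked. In particular, complementation follows from $J$ and $P\otimes Q$ as in Theorem~\ref{Thm:ProductRule}, so your fallback to Proposition~\ref{Prop:ComplementationC0} is not needed.
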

\begin{proof}
By Theorem~\ref{Thm:TensorQuotientDerivative},
\[
\mathcal S_{<\kappa}^{(n)}
(C(K_1\times\cdots\times K_n),X)
\sim
\mathcal S_{<\kappa}^{(n)}
(C(K_1\times\cdots\times K_n))
\widehat\otimes_\varepsilon X.
\]
Therefore it is enough to prove the result in the scalar-valued case. For each $1\leq i\leq n$, put $E_i=C(K_i)$ and $D_i=\mathcal{S}_{<\kappa}(E_i)$. By what was discussed above $D_i\sim c_0(\Lambda_\kappa(K_i))$. And since each $D_i$ is a Mazur space, we have $\mathcal{S}_{<\kappa}(D_i)=0$.

Since $C(K_1\times \cdots \times K_n) \sim E_1\widehat{\otimes}_{\varepsilon}\cdots\widehat{\otimes}_{\varepsilon}E_n$, we compute the iterated derivative of the latter space. The key observation is the following inductive description: for every $0\leq r\leq n$,
\[
\mathcal{S}_{<\kappa}^{(r)}
(E_1\widehat{\otimes}_{\varepsilon}\cdots
\widehat{\otimes}_{\varepsilon}E_n)
\sim
\bigoplus_{\mathcal P}
\widehat{\bigotimes}_{i=1}^n F_i^{\mathcal P},
\]
where the sum is taken over all ordered partitions $\mathcal P=(A_1,\ldots,A_r)$ of a subset of $\{1,\ldots,n\}$ into $r$ nonempty pairwise disjoint blocks, and
\[
F_i^{\mathcal P}=
\begin{cases}
D_i, & \text{if } i\in A_1\cup\cdots\cup A_r,\\
E_i, & \text{otherwise}.
\end{cases}
\]

For $r=0$, the formula is clear. Assume that it holds for some
$r<n$. Applying Theorem~\ref{Thm:MultipleDerivatives} to each summand in the
decomposition and using that $\mathcal S_{<\kappa}$ commutes with finite
direct sums, we obtain a new finite direct sum.

Let $(A_1,\ldots,A_r)$ be an ordered partition indexing one of the summands. Since $\mathcal S_{<\kappa}(D_i)=0$ for every $i$, the only nonzero terms arising
from this summand are obtained by choosing a nonempty subset of the indices
not belonging to
$
A_1\cup\cdots\cup A_r
$
and replacing the corresponding factors $E_i$ by $D_i$.
Equivalently, one obtains a new ordered partition
$
(A_1,\ldots,A_r,A_{r+1})
$,
where $A_{r+1}$ is a nonempty subset of the remaining indices.

Consequently, the summands in the new decomposition are naturally indexed by
the ordered partitions with $r+1$ nonempty blocks. Conversely, every such
ordered partition arises uniquely in this way. Therefore the formula holds
for $r+1$.

At the final stage $r=n$, the only possible subset of $\{1,\ldots,n\}$ that can be partitioned into $n$ nonempty blocks is the whole set, and each block must be a singleton. Hence the ordered partitions are exactly the $n!$ permutations of $\{1,\ldots,n\}$. Therefore
\[
\mathcal{S}_{<\kappa}^{(n)}
(E_1\widehat{\otimes}_{\varepsilon}\cdots
\widehat{\otimes}_{\varepsilon}E_n)
\sim
\bigoplus_{m=1}^{n!}
D_1\widehat{\otimes}_{\varepsilon}\cdots
\widehat{\otimes}_{\varepsilon}D_n.
\]
Since $D_1\widehat{\otimes}_{\varepsilon}\cdots\widehat{\otimes}_{\varepsilon}D_n\sim
c_0(\Lambda_{\kappa}(K_1)\times\cdots\times
\Lambda_{\kappa}(K_n))$, we obtain
\[
\mathcal{S}_{<\kappa}^{(n)}(C(K_1\times \cdots \times K_n))
\sim
c_0\bigl(\{1,\ldots,n!\}\times
\Lambda_{\kappa}(K_1)\times\cdots\times
\Lambda_{\kappa}(K_n)\bigr).
\]
The final assertion follows from the fact that
$c_0(\{1,\ldots,n!\}\times \Gamma)\sim c_0(\Gamma)$ whenever $\Gamma$ is infinite.
\end{proof}

The following elementary lemma is essentially contained in
\cite{Kislyakov}. We include a proof for completeness.

\begin{lemma}\label{Lem:CardKis}
Let $\lambda$ be an uncountable regular initial ordinal. If $\alpha=\lambda\alpha'+\delta$, with $\alpha'\leq\lambda$ and $\delta<\lambda$,
then $|\Lambda_\lambda(\alpha)|=|\alpha'|$.
\end{lemma}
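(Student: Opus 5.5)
The plan is to identify $\Lambda_\lambda(\alpha)$ explicitly and then count it. In the ordinal interval $[0,\alpha]$, a point $\gamma$ has character $\chi([0,\alpha],\gamma)=\operatorname{cf}(\gamma)$ when $\gamma$ is a limit ordinal, and character $1$ otherwise. This is the tree fact $\chi(K,w)=\operatorname{cf}(\operatorname{ht}(w))$ already used in the proof of Theorem~\ref{Thm:CompactTrees}. Hence
\[
\Lambda_\lambda(\alpha)=\{\gamma\leq\alpha:\gamma \text{ limit},\ \operatorname{cf}(\gamma)\geq\lambda\}.
\]
Writing every ordinal $\gamma$ in the form $\gamma=\lambda\eta+\rho$ with $\rho<\lambda$, I will check that $\operatorname{cf}(\gamma)\geq\lambda$ forces $\rho=0$ and $\eta>0$. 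Indeed, if $\rho>0$ then either $\rho$ is a successor, or $\operatorname{cf}(\gamma)=\operatorname{cf}(\rho)\leq\rho<\lambda$ because $\lambda$ is initial. When $\rho=0$ and $\eta>0$, one has $\operatorname{cf}(\lambda\eta)=\lambda$ if $\eta$ is a successor, since $\lambda$ is regular, and $\operatorname{cf}(\lambda\eta)=\operatorname{cf}(\eta)$ if $\eta$ is a limit.

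With this description, the key steps are as follows.

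First, the points $\lambda\eta$ with $\eta$ a successor and $\lambda\eta\leq\alpha$ all lie in $\Lambda_\lambda(\alpha)$. The condition $\lambda\eta\leq\lambda\alpha'+\delta$ with $\delta<\lambda$ is equivalent to $\eta\leq\alpha'$.

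Second, this gives an injection
\[
\{\eta\leq\alpha':\eta\text{ successor}\}\hookrightarrow\Lambda_\lambda(\alpha),\qquad \eta\mapsto\lambda\eta.
\]
Conversely, $\Lambda_\lambda(\alpha)\subset\{\lambda\eta:0<\eta\leq\alpha'\}$, which has cardinality $|\alpha'|$. So $|\Lambda_\lambda(\alpha)|\leq|\alpha'|$.

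Third, for the lower bound I will show that $\alpha'$ contains $|\alpha'|$ many successor ordinals. If $\alpha'$ is infinite, the successors below $\alpha'$ are in bijection with $\alpha'$ via $\eta\mapsto\eta+1$, restricted to $\eta+1\leq\alpha'$. If $\alpha'$ is finite, the successors in $[1,\alpha']$ are exactly $1,\dots,\alpha'$. In both cases the count is $|\alpha'|$. The degenerate case $\alpha'=0$ gives $\alpha=\delta<\lambda$ and $\Lambda_\lambda(\alpha)=\emptyset$, consistent with $|\alpha'|=0$. The case $\alpha'=\lambda$ is covered by the same argument.

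The main obstacle is the cofinality bookkeeping in the first paragraph. Two points need care: that every $\gamma\leq\alpha$ with $\operatorname{cf}(\gamma)\geq\lambda$ really has the form $\lambda\eta$, which uses that $\lambda$ is initial so that $\rho<\lambda$ gives $\operatorname{cf}(\rho)<\lambda$; and that the successor multiples have cofinality exactly $\lambda$, which uses regularity. Once these are settled, the cardinality count is routine.
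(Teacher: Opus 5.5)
Your proof is correct. Its upper bound is the paper's argument: you write $\gamma=\lambda\eta+\rho$ with $\rho<\lambda$, show that $\gamma\in\Lambda_\lambda(\alpha)$ forces $\rho=0$, and then show $0<\eta\le\alpha'$.

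The lower bound is where you differ, and your version is the sound one. The paper asserts that every $\lambda\theta$ with $0<\theta\le\alpha'$ lies in $\Lambda_\lambda(\alpha)$, and hence that $\Lambda_\lambda(\alpha)=\{\lambda\theta:0<\theta\le\alpha'\}$. That equality fails whenever $\alpha'\ge\omega$. For a limit $\theta<\lambda$ one has $\operatorname{cf}(\lambda\theta)=\operatorname{cf}(\theta)<\lambda$. For instance, $\lambda\omega$ is the supremum of the countable set $\{\lambda n:n<\omega\}$, so it has character $\omega$ in $[0,\alpha]$.

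You avoid this by placing only the successor multiples $\lambda\eta$ in $\Lambda_\lambda(\alpha)$; there regularity of $\lambda$ gives cofinality exactly $\lambda$. You then sandwich $\Lambda_\lambda(\alpha)$ between two sets of cardinality $|\alpha'|$: the successor multiples and all nonzero multiples up to $\lambda\alpha'$. This recovers the paper's cardinality conclusion with a valid justification. It also shows that the hypothesis $\alpha'\le\lambda$ is not actually needed.
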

\begin{proof}
We notice that the set $\{\lambda\theta:0<\theta\leq \alpha'\}$ is contained in $\Lambda_\lambda(\alpha)$. On the other hand, if
$\xi\in \Lambda_\lambda(\alpha)$, then writing $\xi=\lambda \theta +\rho$, with $\rho<\lambda$, we deduce that
$\rho=0$ and $0<\theta\leq\alpha'$.Hence $\Lambda_\lambda(\alpha)=
\{\lambda\theta:0<\theta\leq\alpha'\}$. Therefore, $|\Lambda_\lambda(\alpha)|=|\alpha'|.$
\end{proof}

Combining the explicit description of the iterated derivatives obtained above
with Lemma~\ref{Lem:CardKis}, we are now ready to prove our main
classification theorem.

\begin{theorem}\label{Thm:MainSuper}
Let $\alpha$ and $\beta$ be uncountable ordinals with $\alpha\leq\beta$,
let $n\geq 1$, and let $X$ be a Banach space with the Mazur property and
the Gelfand--Phillips property, containing no subspace isomorphic to $c_0$.
Assume moreover that $X^p\not\sim X^q$ whenever $p$ and $q$ are distinct positive integers. Then
\[
C([0,\alpha]^n,X)\sim C([0,\beta]^n,X)\quad\text{if and only if}\quad C([0,\alpha])\sim C([0,\beta]).
\]
\end{theorem}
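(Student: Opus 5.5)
The "if" direction is immediate. If $C([0,\alpha])\sim C([0,\beta])$, then taking injective tensor powers of the isomorphism (the $\varepsilon$-norm is uniform) gives
\[
C([0,\alpha]^n,X)\cong C([0,\alpha])\widehat\otimes_\varepsilon\cdots\widehat\otimes_\varepsilon C([0,\alpha])\widehat\otimes_\varepsilon X\sim C([0,\beta]^n,X).
\]

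For the "only if" direction we use Kislyakov's classification. Write $\alpha=\lambda\alpha'+\delta$ and $\beta=\mu\beta'+\delta'$, where $\lambda=|\alpha|$ and $\mu=|\beta|$ are initial ordinals, $\alpha'\le\lambda$, $\delta<\lambda$, and similarly for $\beta$. By that classification, $C([0,\alpha])\sim C([0,\beta])$ holds precisely when $\lambda=\mu$ and $|\alpha'|=|\beta'|$; this is the form the paper's Lemma~\ref{Lem:CardKis} is designed to match. So, assuming $C([0,\alpha]^n,X)\sim C([0,\beta]^n,X)$, we must recover both invariants.

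\emph{Step 1: the cardinals agree.} The density character of $C([0,\alpha]^n,X)$ is $\max(|\alpha|,\operatorname{dens}X)$, which need not separate $\alpha$ from $\beta$. Instead we use the derivatives. For each uncountable regular cardinal $\kappa\le|\beta|^+$, Theorem~\ref{Thm:DerivativeInvariant} applied $n$ times gives
\[
\mathcal S^{(n)}_{<\kappa}\bigl(C([0,\alpha]^n,X)\bigr)\sim\mathcal S^{(n)}_{<\kappa}\bigl(C([0,\beta]^n,X)\bigr).
\]
By Lemma~\ref{Lem:DerivativeNtimes}, each side is $c_0(\{1,\dots,n!\}\times\Lambda_\kappa(\gamma)^n,X)$ for $\gamma=\alpha,\beta$.

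The key sub-claim is that, since $X$ contains no $c_0$ and $X^p\not\sim X^q$ for $p\neq q$, the space $c_0(\Gamma,X)$ determines $|\Gamma|$. For finite $\Gamma$ it is $X^{|\Gamma|}$, and distinct sizes give non-isomorphic spaces by hypothesis. For infinite $\Gamma$ it contains $c_0$ whereas $X^m$ does not, so finite and infinite $\Gamma$ are distinguished. Among infinite $\Gamma$, compare density characters: $\operatorname{dens} c_0(\Gamma,X)=\max(|\Gamma|,\operatorname{dens}X)$. This is the main obstacle, because when $\operatorname{dens}X$ is large it hides $|\Gamma|$. To get around it, I would separate the cardinalities with the invariant ``largest $\Gamma'$ such that $c_0(\Gamma')$ embeds''. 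Since $X\not\supset c_0$, a standard argument (a copy of $c_0(\Gamma')$ in $c_0(\Gamma,X)$ with $|\Gamma'|>|\Gamma|$ would force, by pigeonhole on supports, a copy of $c_0$ inside finitely many coordinates, i.e.\ inside some $X^m$) shows that $c_0(\Gamma')$ embeds into $c_0(\Gamma,X)$ if and only if $|\Gamma'|\le|\Gamma|$, for infinite $\Gamma$. Hence $|\{1,\dots,n!\}\times\Lambda_\kappa(\alpha)^n|=|\{1,\dots,n!\}\times\Lambda_\kappa(\beta)^n|$ for every such $\kappa$.

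Now take $\kappa=\mu$ when $\mu$ is regular; in general we use Kislyakov's device of passing to a regular cardinal, e.g.\ $\mu^+$ or handling singular $\mu$ through $<\kappa$ with $\kappa$ ranging over regular cardinals below $\mu$. Taking $\kappa=\lambda^+$ gives $\Lambda_{\lambda^+}(\alpha)=\emptyset$, so the $\alpha$-side derivative is $0$. If $\mu>\lambda$, then $\Lambda_{\lambda^+}(\beta)\ni\lambda^+$ is nonempty, giving a contradiction. So $\lambda=\mu$.

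\emph{Step 2: the multipliers agree.} With $\kappa=\lambda$, Lemma~\ref{Lem:CardKis} gives $|\Lambda_\lambda(\alpha)|=|\alpha'|$. Step 1's cardinal equality then yields $n!\,|\alpha'|^n=n!\,|\beta'|^n$ in the finite case, and $|\alpha'|=|\beta'|$ in the infinite case. Either way $|\alpha'|=|\beta'|$, and Kislyakov's theorem gives $C([0,\alpha])\sim C([0,\beta])$.

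The delicate points are the singular-cardinal case of $\lambda$, where Lemma~\ref{Lem:CardKis} needs a regular $\lambda$ and I would reduce to Kislyakov's corresponding argument, and the embedding-of-$c_0(\Gamma')$ cardinality argument above.
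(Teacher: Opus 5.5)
Your ``if'' direction is fine, and so is Step~1. Taking $\kappa=\lambda^+$ works: then $\Lambda_{\lambda^+}(\alpha)=\varnothing$, while $\lambda^+\in\Lambda_{\lambda^+}(\beta)$ if $|\beta|>\lambda$. This is a legitimate substitute for the paper's appeal to \cite[Theorem~1.3]{Candido2016}.

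The gap is in the form of Kislyakov's classification you use. The criterion ``$\lambda=\mu$ and $|\alpha'|=|\beta'|$'' is only valid for $\lambda$ regular and $\alpha,\beta$ (essentially) in $[\lambda,\lambda^2]$. Once $\alpha\ge\lambda^2+\lambda$, a representation $\alpha=\lambda\alpha'+\delta$ with $\alpha'\le\lambda$ does not even exist. When $\lambda$ is singular or $\lambda^2\le\alpha$, Kislyakov's criterion is $\beta<\alpha^\omega$. The only information your argument extracts is the cardinals $|\Lambda_\kappa(\cdot)|$ read off from the top derivatives $\mathcal S^{(n)}_{<\kappa}$, and these cannot detect that criterion.

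For instance, take $\alpha=\omega_1^2$ and $\beta=\omega_1^\omega=\alpha^\omega$. Then $|\Lambda_{\omega_1}(\alpha)|=|\Lambda_{\omega_1}(\beta)|=\aleph_1$, and $\Lambda_\kappa(\alpha)=\Lambda_\kappa(\beta)=\varnothing$ for every regular $\kappa\ge\omega_2$. So all your invariants coincide, although $C([0,\alpha])\not\sim C([0,\beta])$. The same happens for $\aleph_\omega$ and $\aleph_\omega^\omega$.

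Deferring these cases to ``Kislyakov's corresponding argument'' does not work. That argument starts from a scalar isomorphism $C([0,\alpha])\sim C([0,\beta])$, which is exactly what you are trying to prove. Your invariants do settle the case $\lambda$ regular and $\alpha<\lambda^2$: there $|\Lambda_\lambda(\alpha)|=|\alpha'|<\lambda$ rules out $\beta\ge\lambda^2$, and Lemma~\ref{Lem:CardKis} with \cite[Theorem~2]{Kislyakov} finishes.

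The missing idea, which is the one the paper uses, is to differentiate only $n-1$ times so that one ordinal factor survives. Look at stage $r=n-1$ of the decomposition in the proof of Lemma~\ref{Lem:DerivativeNtimes}. The ordered partitions of $(n-1)$-element subsets into singletons contribute $c_0(A\times\Lambda_\kappa(\alpha)^{n-1},C([0,\alpha],X))$. The partitions of $\{1,\ldots,n\}$ contribute $c_0(B\times\Lambda_\kappa(\alpha)^{n},X)$, which is absorbed because $C([0,\alpha],X)$ contains a complemented copy of $c_0(\Gamma,X)$ with $|\Gamma|=|\alpha|$.

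Together with $|\Lambda_\kappa(\alpha)|=|\Lambda_\kappa(\beta)|$, the hypothesis then gives $C([0,\alpha],c_0(\Gamma_\kappa,X))\sim C([0,\beta],c_0(\Gamma_\kappa,X))$. Galego's vector-valued classification \cite[Theorem~1.1]{galego2} then excludes $\lambda\le\alpha<\lambda^2\le\beta$. It also yields $\beta<\alpha^\omega$ when $\lambda$ is singular or $\lambda^2\le\alpha$, after which \cite[Theorem~1]{Kislyakov} applies. For $n=1$ there is no factor to spare, and the paper instead applies \cite[Theorem~1.1]{galego4} directly to $C([0,\alpha],X)\sim C([0,\beta],X)$.
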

\begin{proof}
The converse implication is immediate. Thus, it remains to prove the direct implication. Since $C([0,\beta]^n)$ embeds into $C([0,\beta]^n,X)$ and $C([0,\beta]^n,X)\sim C([0,\alpha]^n,X)$, \cite[Theorem~1.3]{Candido2016} gives $|\beta|\le |\alpha|$. Since $\alpha\le\beta$, it follows that $|\alpha|=|\beta|$. Let $\kappa$ be an uncountable regular cardinal. By employing Lemma \ref{Lem:DerivativeNtimes} we obtain 
\[c_0\bigl(\{1,\ldots,n!\}\times
(\Lambda_\kappa(\alpha))^n,X\bigr)
\sim c_0\bigl(\{1,\ldots,n!\}\times
(\Lambda_\kappa(\beta))^n,X\bigr).
\]

Applying again \cite[Theorem~1.3]{Candido2016}, we obtain $|\Lambda_\kappa(\alpha)|=|\Lambda_\kappa(\beta)|$
whenever $\Lambda_\kappa(\alpha)$ and $\Lambda_\kappa(\beta)$ are infinite.
If they are finite, the same conclusion follows from the assumption that
$X^p\not\sim X^q$ whenever $p$ and $q$ are distinct positive integers.

If $n=1$, the desired conclusion follows directly from
\cite[Theorem~1.1]{galego4}, together with the assumption that
$X^p\not\sim X^q$ for distinct positive integers $p$ and $q$.
Thus, in the remainder of the proof we assume $n\ge 2$. Applying the inductive decomposition from the proof of
Lemma~\ref{Lem:DerivativeNtimes} at the stage $r=n-1$, the summands are indexed
by ordered partitions of subsets of $\{1,\ldots,n\}$ into $n-1$ nonempty
blocks. Such partitions are of two types: either they partition a subset of
cardinality $n-1$, producing the term
\[
c_0(A\times(\Lambda_\kappa(\alpha))^{n-1},C([0,\alpha],X)),
\]
where $A=\{\text{ordered partitions of an }(n-1)\text{-element subset into singletons}\}$
or they partition the whole set $\{1,\ldots,n\}$, in which case exactly one
block has cardinality two, producing the term
\[
c_0(B\times(\Lambda_\kappa(\alpha))^n,X),
\]
where $B=\{\text{ordered partitions of }\{1,\ldots,n\}\text{ into }n-1\text{ blocks}\}$. Simple combinatorics gives $|A|=n!$ and $|B|=\frac{n(n-1)}{2}(n-1)!$.

We can write 
\[\mathcal{S}_{<\kappa}^{(n-1)}
(C([0,\alpha]^n),X) \sim  c_0(A\times (\Lambda_\kappa(\alpha))^{n-1}, C([0,\alpha],X))\oplus c_0(B \times (\Lambda_\kappa(\alpha))^{n},X).\]

Since $|B\times(\Lambda_\kappa(\alpha))^n|\leq |\alpha|$, and since
$C([0,\alpha],X)$ contains a complemented copy of
$c_0(\Gamma,X)$ with $|\Gamma|=|\alpha|$, the second summand is absorbed by
the first one. Hence
\[
\mathcal{S}_{<\kappa}^{(n-1)}(C([0,\alpha]^n),X)
\sim
c_0(A\times(\Lambda_\kappa(\alpha))^{n-1},C([0,\alpha],X)).
\]

The same computation for $C([0,\beta]^n)$, gives  
\[c_0(A\times (\Lambda_\kappa(\alpha))^{n-1}, C([0,\alpha],X))\sim  c_0(A\times (\Lambda_\kappa(\beta))^{n-1}, C([0,\beta],X))\]
where, as shown above, $|\Lambda_\kappa(\alpha)|=|\Lambda_\kappa(\beta)|$. Hence, for a suitable set $\Gamma_\kappa$, the previous relation may be
rewritten, using the canonical identifications as
\begin{equation}\label{Rel:equa}
c_0(\Gamma_\kappa,X)^\alpha
\sim
c_0(\Gamma_\kappa,X)^\beta
\end{equation}
in the notation of \cite{galego2}.

By \cite[Theorem~1.1(4)]{galego2} and \eqref{Rel:equa}, there is no
uncountable regular ordinal $\lambda$ such that $\lambda\leq \alpha<\lambda^2\leq \beta$.
Let $\lambda$ be the initial ordinal satisfying $|\lambda|=|\alpha|=|\beta|$. Therefore, only two cases may occur.

First, suppose that $\lambda$ is singular, or that $\lambda$ is an uncountable regular ordinal with $\lambda^2\leq \alpha$. Then, by
\cite[Theorem~1.1(2)]{galego2} and \eqref{Rel:equa}, we obtain $\beta<\alpha^\omega$. Hence, \cite[Theorem~1]{Kislyakov} yields $C([0,\alpha])\sim C([0,\beta])$.

It remains to consider the case where $\lambda$ is an uncountable regular
ordinal and $\alpha,\beta\in[\lambda,\lambda^2]$. Write
\[
\alpha=\lambda\alpha'+\gamma,
\qquad
\beta=\lambda\beta'+\delta,
\]
where $\alpha',\beta'\leq\lambda$ and $\gamma,\delta<\lambda$.

Taking $\kappa=\lambda$ and using Lemma~\ref{Lem:CardKis}, we obtain
\[
|\alpha'|=|\Lambda_\lambda(\alpha)|=|\Lambda_\lambda(\beta)|=|\beta'|.
\]
Therefore, \cite[Theorem~2]{Kislyakov} implies that $C([0,\alpha])\sim C([0,\beta])$.
\end{proof}

\begin{remark}
The assumption that $X^p\not\sim X^q$ for distinct positive integers
$p$ and $q$ is needed to exclude finite multiplicity phenomena.
Indeed, if $X\sim X^2$, then $X\sim X^n$ for every positive integer $n$. It follows that
\[
C([0,\omega_1],X)\sim C([0,\omega_1 n],X)
\]
for every positive integer $n$.
On the other hand, according to \cite{Se2}
\[
C([0,\omega_1])\not\sim C([0,\omega_1 n])
\]
whenever $n\neq 1$.
\end{remark}

We close this section with the following open problem, which naturally extends the scope of Theorem~\ref{Thm:MainSuper}.

\begin{problem}
Classify, up to isomorphism, the spaces $C([0,\alpha_1]\times\cdots\times[0,\alpha_n])$,
where $\alpha_1,\ldots,\alpha_n$ are uncountable ordinals.
\end{problem}

The results of the present paper suggest that the iterated
$\mathcal S_{<\kappa}$-derivatives may provide useful invariants for this
problem. Indeed, the formulas from Lemma~\ref{Lem:DerivativeNtimes}, together
with the classification result of Theorem~\ref{Thm:MainSuper}, show that these
derivatives recover significant ordinal information from finite powers of
ordinal intervals.

\appendix

\section{Compact Lines and Weak$^*$-Sequential Compactness}

The purpose of this appendix is to establish a couple of facts that permit the application of
Theorem~\ref{Thm:DeltaInjectiveTensor} and Theorem~\ref{Thm:TensorEmbeddingIntersection}
to Banach spaces of the form $C(K)$, where $K$ is a compact line. Although these
results are likely known to specialists, we were unable to find suitable references.
For completeness, we therefore include detailed proofs.

The first result is the following.

\begin{theorem}\label{Thm:CompactLine}
If $K$ is a compact line, then the dual ball $B_{C(K)^*}$ is weak$^*$-
sequentially compact. Consequently, $C(K)$ has the Gelfand--Phillips property.
\end{theorem}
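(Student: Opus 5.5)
Given a bounded sequence $(\mu_n)_n$ in $B_{C(K)^*}=B_{\mathcal M(K)}$, I will reduce to a separable compact line and then use metrizability-type arguments there.

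\emph{Step 1: reduction to a separable subline.} By \cite[p.~86]{Mercourakis} (as used in Proposition~\ref{Prop:CompactLinesAux}), each $|\mu_n|$ is concentrated on a separable set $S_n\subset K$. The closure $K_0=\overline{\bigcup_n S_n}$ is a separable compact subspace of $K$, hence a separable compact line, and every $\mu_n$ is supported on $K_0$.

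\emph{Step 2: sequential compactness in $\mathcal M(K_0)$.} A separable compact line has weight at most $\mathfrak c$, but it need not be metrizable (e.g.\ the double arrow), so I would avoid metrizability. Instead I would use that a separable compact line admits a continuous order-preserving surjection onto a compact subset of $[0,1]$ that is at most two-to-one. Consequently $C(K_0)$ is generated by the countably many ``step'' functions $\mathbf 1_{[0,d]}$ and $\mathbf 1_{[d,1]}$, $d\in D$, for a countable dense $D$, together with the metrizable part. This is exactly the content of Lemma~\ref{Lem:Auxweakstar}-style approximations.

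Concretely, I would choose a countable set $\mathcal F\subset C(K_0)$, consisting of continuous functions of the form $h\circ\phi$ where $\phi:K_0\to[0,1]$ is the order map and $h$ ranges over a countable dense subset of $C([0,1])$, together with continuous functions that separate the twin points $\{x^-,x^+\}$. A diagonal argument then gives a subsequence $(\mu_{n_k})_k$ along which $\int f\,d\mu_{n_k}$ converges for every $f\in\mathcal F$. The key claim, and the main obstacle, is that this forces weak$^*$ convergence, although $\mathcal F$ need not be norm-dense in $C(K_0)$ because the double arrow space is non-metrizable.

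To handle this I would use the distribution-function description of measures on a line. A measure $\mu$ on $K_0$ is determined by the values $\mu([0,x])$, $x\in K_0$, which form a function of bounded variation. Helly's selection theorem applies to such functions on a linearly ordered set: a bounded sequence of functions of uniformly bounded variation has a pointwise convergent subsequence. I would run this on positive and negative parts separately, using monotone functions to reduce to the monotone Helly theorem. Since $K_0$ is uncountable, I would combine the pointwise convergence on the countable set $D^\pm$ with the fact that a monotone function has only countably many jumps. After a further diagonalization over the jump points, this gives convergence of $\mu_{n_k}([0,x])$ for all $x$ outside a set that does not matter.

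The limit defines a measure $\mu$. The remaining task is to check $\int f\,d\mu_{n_k}\to\int f\,d\mu$ for all $f\in C(K_0)$. I would do this by uniformly approximating $f$ with step functions on clopen intervals $[0,x]$, which exist in abundance in a compact line: every continuous function on a compact line is a uniform limit of finite combinations of indicators of intervals whose endpoints lie in a prescribed dense set. This is where the proof is delicate, since intervals need not be clopen; the workaround is to use interval endpoints at the gaps of the order.

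Finally, I would pull back to $K$ by extending measures by zero. Weak$^*$ convergence on $K_0$ implies weak$^*$ convergence on $K$, because restriction $C(K)\to C(K_0)$ is an operator. The Gelfand--Phillips consequence then follows as in Corollary~\ref{Cor:ScatteredDelta}, via \cite[Chapter XIII, Exercise~4]{Diestel}: a space whose dual ball is weak$^*$-sequentially compact has the Gelfand--Phillips property.
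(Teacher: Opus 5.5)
Your eventual route is the paper's route: distribution functions $G_n(x)=\mu_n([0,x])$, split into monotone parts, a Helly selection on a line, uniform approximation of $f$ by step functions, and \cite{Diestel} for the Gelfand--Phillips property. The genuine difference is that you first pass to a separable subline via \cite{Mercourakis}, so that the classical countable-dense-set Helly argument becomes available. The paper's Lemma~\ref{Lem:Helly} avoids this: it works on an arbitrary compact line by selecting along the level-set suprema $s_n(q)$, $q\in\mathbb Q$, and uses only the sequential compactness of $K$. Your paragraph on the countable family $\mathcal F$ is a dead end and should be dropped. Indeed, $\mathbf 1_{[0,d]}$ is continuous only when $d$ has an immediate successor, and $C(K_0)$ is nonseparable for the double arrow.

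The step that carries the argument, however, fails as you state it. It is false that every $f\in C(K)$ is a uniform limit of step functions whose endpoints lie in a prescribed dense set. In the double arrow space, $D=\{(q,1):q\in\mathbb Q\}$ is dense. Yet for irrational $r$, every interval with endpoints in $D$ contains both or neither of the adjacent points $(r,0)<(r,1)$. Such step functions therefore stay at distance $\ge 1/2$ from the continuous function $\mathbf 1_{[0,(r,0)]}$.

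What is true is \cite[Lemma 3.9]{CanKau}: there is a partition $0_K=x_0<\cdots<x_n=1_K$ in which each $[x_{i-1},x_i]$ either has oscillation $<\varepsilon$ or is an adjacent pair. The $x_i$ depend on $f$ and can be any of uncountably many jump points. So you need $G_{n_k}(x)$ to converge at every $x\in K_0$, not merely ``outside a set that does not matter''.

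Your Helly argument does give this if you finish it properly:
\begin{itemize}
\item Let $\tilde G$ be the limit on $D$, and put $a(x)=\sup_{d<x}\tilde G(d)$ and $b(x)=\inf_{d>x}\tilde G(d)$.
\item Convergence at $x$ can fail only if $a(x)<b(x)$.
\item Two points of $K_0\setminus D$ with no point of $D$ between them must be adjacent, since the open interval between them misses $D$. Such a pair shares the same $a$ and $b$.
\item Hence the nonempty intervals $(a(x),b(x))$ are pairwise disjoint up to such pairs, so the bad set is countable.
\item One more diagonalization over the bad set yields convergence everywhere.
\end{itemize}

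Finally, do not define the limit measure from the pointwise limit $G$, because $G$ need not be right-continuous. For example, $\mu_n=\delta_{1/2+1/n}$ on $[0,1]$ gives $G=\mathbf 1_{(1/2,1]}$, while the weak$^*$ limit is $\delta_{1/2}$. As the paper does, first show that $(\int f\,d\mu_{n_k})_k$ is Cauchy for every $f$, and then obtain $\mu$ from the Riesz representation theorem.
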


We will need the following adaptation of a classical Helly selection principle \cite[Lemma 2,\S 4  p. 221]{Natanson}.

\begin{lemma}\label{Lem:Helly} Let $K$ be a compact line and let $(f_n)_n$ be a uniformly bounded
sequence of nondecreasing real-valued functions on $K$. Then there exist a subsequence $(f_{n_k})_k$ and a nondecreasing function $f:K\to\mathbb R$
such that $\lim_{k\to \infty }f_{n_k}(x)= f(x)$ for every $x\in K$.
\end{lemma}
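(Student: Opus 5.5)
The natural route is to imitate the classical Helly selection argument, replacing the use of rational points of $[a,b]$ by a countable set chosen to control jumps. First, I would use a diagonal argument to get pointwise convergence on a countable set, and then fill in the remaining points.

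\textbf{Step 1 (convergence on a countable set).} Assume $|f_n|\le M$. Since $K$ need not be separable, I cannot take a countable dense set. Instead, I would pass to the right-continuous envelopes. For each $n$ define the Lebesgue--Stieltjes type measure, or more elementarily argue as follows: for a nondecreasing bounded $g$ on a linearly ordered set, the set of points where $g$ jumps by at least $1/m$ is finite, of size at most $2Mm$. By diagonalization over rationals $q$, I would work with the level sets $U_{n,q}=\{x: f_n(x)>q\}$. These are upward-closed, so each is determined by a cut of $K$, an "initial segment" boundary.

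\textbf{Step 2 (reduce to cuts).} A nondecreasing function is determined, up to the values at its cut points, by the family of cuts $(U_{n,q})_{q\in\mathbb Q}$. Each cut of a compact line is either $[c,\to)$ or $(c,\to)$ for a unique $c\in K$, by completeness of compact lines. So $f_n$ is encoded by points $c_{n,q}\in K$ together with a bit $\epsilon_{n,q}\in\{0,1\}$.

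Using that every sequence in a compact line has a monotone subsequence (Ramsey), I would diagonalize over $q\in\mathbb Q\cap[-M,M]$. This gives a subsequence along which, for each $q$, $c_{n,q}$ is monotone, hence converges to its sup or inf $c_q$. I would also make the bit eventually constant, or else record that $c_{n,q}$ is eventually strictly monotone.

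\textbf{Step 3 (convergence of the cuts).} For each $q$ the cuts then converge pointwise, in the sense that $\mathbf 1_{U_{n,q}}(x)$ converges for every $x$. If $c_{n,q}$ is strictly increasing to $c_q$, then for $x<c_q$ the indicator is eventually $0$, and for $x\ge c_q$ it is eventually $1$. The decreasing case and the eventually constant case are handled similarly. Define $U_q=\{x:\lim_n\mathbf 1_{U_{n,q}}(x)=1\}$, which is upward-closed and decreasing in $q$. Then set
\[
f(x)=\sup\{q: x\in U_q\}.
\]
This $f$ is nondecreasing. It remains to check that $f_n(x)\to f(x)$. If $q<f(x)$ then some $q'>q$ has $x\in U_{q'}$, so eventually $f_n(x)>q'$. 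If $q>f(x)$ then $x\notin U_q$, so eventually $f_n(x)\le q$. Hence $\liminf f_n(x)\ge f(x)\ge\limsup f_n(x)$.

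\textbf{Main obstacle.} The delicate step is Step 3, namely confirming that monotone sequences of cut points in a compact line yield pointwise-convergent indicator functions at every $x$, including $x=c_q$ itself. The strictly monotone case works because $c_q$ is never attained. The eventually constant case requires the bit $\epsilon_{n,q}$ to stabilize, which I would also arrange in the diagonal subsequence. The countability of $\mathbb Q$ is what makes the diagonalization possible, replacing the separability used in the classical proof.
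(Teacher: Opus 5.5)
Your argument is correct, and it takes a different route from the paper's. The paper works with essentially the same cut points, $s_n(q)=\sup\{x:f_n(x)\le q\}$, but only extracts a subsequence along which $s_n(q)\to s(q)$ for every rational $q$. This gives no control at the limit cut itself, so the paper obtains only $\alpha(x)\le\liminf_n f_n(x)\le\limsup_n f_n(x)\le\beta(x)$, with $\alpha,\beta$ defined from the $s(q)$. It then shows that the exceptional set $\{x:\alpha(x)<\beta(x)\}$ is countable, because the intervals $(\alpha(x),\beta(x))$ are pairwise disjoint, and finishes with a second diagonal extraction over that set, as in Natanson's classical proof.

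You instead strengthen the first extraction. You require each $(c_{n,q})_n$ to be either constant with a stabilized open/closed bit, or strictly monotone. Then every indicator $\mathbf 1_{\{f_n>q\}}$ converges at every point of $K$, including the limit cut, since a strictly monotone sequence never attains its supremum or infimum. So no exceptional set arises, a single diagonalization suffices, and the limit is given explicitly by $f(x)=\sup\{q:x\in U_q\}$. The price is only the bookkeeping of the bit.

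Two minor slips, neither affecting the proof. First, the cut point is not unique when $K$ has jumps: if $c$ has an immediate successor $c^+$, then $(c,\to)=[c^+,\to)$. So fix a choice such as $c_{n,q}=\inf U_{n,q}$; any fixed choice works, since you only use $(c_{n,q},\to)\subseteq U_{n,q}\subseteq[c_{n,q},\to)$. Second, let $q$ range over all of $\mathbb Q$ (the values $q\notin[-M,M]$ are trivial), so that $f(x)$ is never a supremum of the empty set. The remarks in Step 1 about jumps and Lebesgue--Stieltjes measures are not needed.
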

\begin{proof}
We denote by $0_K$ and $1_K$ the minimum and maximum of $K$, respectively and fix $a<\inf \{f_n(0_K):n \in \mathbb N\}$ and $b>\sup\{f_n(1_K):n \in \mathbb N\}$. For each $q\in \mathbb Q\cap(a,b)$, put $A_n(q)=\{x\in K:f_n(x)\leq q\}$. Let
\[
s_n(q)=\sup A_n(q),
\]
with the convention that $s_n(q)=0_K$ if $A_n(q)=\emptyset$ and $s_n(q)=1_K$ if $A_n(q)=K$.

Since every sequence in a compact line has a convergent subsequence, and
$\mathbb Q\cap(a,b)$ is countable, a diagonal argument yields a subsequence, that for convenience we will denote by $(f_{n})_{n}$, such that the sequence $(s_n(q))_{n}$ converges in $K$ for every
$q\in\mathbb Q\cap(a,b)$. Write
\[
s(q)=\lim_{n\to \infty} s_{n}(q).
\]
The map $q\mapsto s(q)$ is clearly nondecreasing.

For $x\in K$, define
\[
\alpha(x)=\sup\{q\in\mathbb Q\cap(a,b):s(q)<x\},
\qquad
\beta(x)=\inf\{q\in\mathbb Q\cap(a,b):x<s(q)\}.
\]
with the conventions $\sup\varnothing=a$ and $\inf\varnothing=b$. We claim that, for each $x\in K$,
\[
\alpha(x)\leq \liminf_n f_{n}(x)\leq \limsup_n f_{n}(x)\leq \beta(x).
\]

Indeed, suppose that $\liminf_n f_n(x)<\alpha(x)$. There exists $q\in\mathbb Q\cap(a,b)$ such that $s(q)<x$ and $\liminf_n f_n(x)<q<\alpha(x)$. As $s_n(q)\to s(q)$, it follows that $s_n(q)<x$ for all sufficiently large $n$. Passing to a subsequence if necessary, we may therefore assume that $f_{n_k}(x)<q$ and $s_{n_k}(q)<x$
for every $k\in\mathbb N$. Since $f_{n_k}(x)<q$, we have $x\in A_{n_k}(q)$, and hence $x\le s_{n_k}(q)$, a contradiction. Similarly, assuming that $\limsup_n f_n(x)>\beta(x)$, one arrives at a contradiction. This establishes our claim.

Let $D=\{x\in K:\alpha(x)<\beta(x)\}$. From the claim, for every $x\in K$, then either $(f_{n}(x))_{n}$ converges, or 
\[
\alpha(x)\leq \liminf_n f_{n}(x)<\limsup_n f_{n}(x)\leq \beta(x).
\]

Since the intervals $(\alpha(x),\beta(x))$ are pairwise disjoint, the set $D=\{x\in K:\alpha(x)<\beta(x)\}$ is countable. Hence, by another diagonal argument, we may pass to a further subsequence $(f_{n_k})_k$ such that $f_{n_k}(x)$ converges for every $x\in D$, and hence converges pointwise on all of $K$. We may define a function 
$f:K\to \mathbb{R}$ by the formula $f(x)=\lim_{k\to \infty}f_{n_k}(x)$. This function is clearly nondecreasing as each $f_n$ is nondecreasing.
\end{proof}

We now combine Lemma~\ref{Lem:Helly} with the representation of measures on
compact lines by functions of bounded variation.

\begin{proof}[Proof of Theorem \ref{Thm:CompactLine}]
We identify, via the Riesz representation theorem, $C(K)^*$ with the space $\mathcal{M}(K)$ of Radon measures of bounded variation on $K$, endowed with the variation norm. Let $(\mu_n)_n$ be an arbitrary sequence in $B_{\mathcal{M}(K)}$. We denote by $0_K$ and $1_K$ the minimum and maximum of $K$, respectively. According to \cite[Theorem 2.4.11]{Ronchim}, for each $n$, the function $G_n:K\to \mathbb{R}$ given by the formula $G_n(t)=\mu_n([0_K,t])$ is right-continuous and of bounded variation.

By \cite[Corollary 2.4.7]{Ronchim}, we can decompose $G_n=L_n-R_n$, where $L_n,R_n:K\to \mathbb{R}$ are nondecreasing right-continuous functions and such that. Since $\|\mu_n\|\leq 1$ for each $n \in \mathbb{N}$, the sequences $(L_n)_n$ and $(R_n)_n$ are uniformly bounded. By employing Lemma \ref{Lem:Helly}, passing to a subsequence if necessary, we may assume that $(L_{n_k})_k$ and $(R_{n_k})_k$ converge pointwise to nondecreasing functions 
$L$ and $R$, respectively. We define $G=L-R$.

Now, since $G_{n_k}$ converges pointwise to $G$, for every simple function of the form
\[
h=a_0\mathbf 1_{\{0_K\}}+\sum_{i=1}^n a_i \mathbf 1_{(x_{i-1},x_i]},
\]
where $0_K=x_0<x_1<\ldots<x_n=1_K$, the sequence $(\int_K h\,d\mu_{n_k})_k$ converges. Indeed
\begin{align*}
\lim_{k\to \infty}\int_K h\,d\mu_{n_k}&= \lim_{k\to \infty}\left(a_0G_{n_k}(0_K)+\sum_{i=1}^n a_i\bigl(G_{n_k}(x_i)-G_{n_k}(x_{i-1})\bigr)\right)\\
&=a_0G(0_K)+\sum_{i=1}^n a_i\bigl(G(x_i)-G(x_{i-1})\bigr).
\end{align*}

Next, given an arbitrary continuous function $f\in C(K)$ and $\epsilon>0$, by \cite[Lemma 3.9]{CanKau} there exists a division $0_K=x_0<x_1<\cdots<x_n=1_K$ such that for each $i=1,\ldots,n$, either $|f(x)-f(y)|<\varepsilon$ for all $x,y\in [x_{i-1},x_i]$, or $x_i$ is left-isolated and $x_{i-1}$ is the immediate predecessor of $x_i$. Define
\[
h=f(0_K)\mathbf 1_{\{0_K\}}+\sum_{i=1}^n f(x_i)\mathbf 1_{(x_{i-1},x_i]}.
\]
Then $\|f-h\|_\infty<\varepsilon$, and, for every $k,l\in \mathbb{N}$, we have
\[
\left|\int_K f\,d\mu_{n_k}-\int_K f\,d\mu_{n_l}\right|
\leq 2\varepsilon+
\left|\int_K h\,d\mu_{n_k}-\int_K h\,d\mu_{n_l}\right|.
\]
Thus $\left(\int_K f\,d\mu_{n_k}\right)_k$ is a Cauchy sequence for each $f\in C(K)$. Define $T:C(K)\to \mathbb{R}$ by the formula
\[
T(f)=\lim_{k\to\infty}\int_K f\,d\mu_{n_k}.
\]
It is clear that $T$ is linear and that $\|T\|\leq 1$. Hence, by the Riesz representation theorem, there exists $\mu \in B_{\mathcal{M}(K)}$
such that 
\[
T(f)=\int_K f\,d\mu
\]
for every $f\in C(K)$. Therefore,
\[
\lim_{k\to \infty}\int_K f\,d\mu_{n_k}
=
\int_K f\,d\mu
\]
for every $f\in C(K)$. We deduce that
$B_{C(K)^*}$ is weak$^*$-sequentially compact.

By \cite[Chapter XIII, Exercise~4, p.238]{Diestel} (see also \cite{Gelfand1938}),
every Banach space whose dual ball is weak$^*$-sequentially compact has the
Gelfand--Phillips property. Hence $C(K)$ has the Gelfand--Phillips property.
\end{proof}

The second fact established in this appendix will be needed in the proof of Proposition~\ref{Prop:CompactLinesAux}.

\begin{lemma}\label{Lem:Auxweakstar}
Let $K$ and $L$ be separable compact lines. For every $\mu \in B_{\mathcal{M}(K\times L)}$, there exists a sequence of finitely supported measures $(\mu_n)_n\in B_{\mathcal{M}(K\times L)}$ such that $\mu_n\xrightarrow{w^*}\mu$.
\end{lemma}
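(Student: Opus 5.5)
The plan is to approximate $\mu$ in three stages: by a measure whose marginals are both atomless or purely atomic, then by combinations of measures living on ``rectangles'', and finally by finitely supported measures. The key structural fact is that a separable compact line $K$ is a continuous order-preserving preimage of a compact subset of $[0,1]$ with at most countably many doubled points. Equivalently, the order intervals $[0_K,x]$ and $(x,1_K]$ with $x$ ranging over a countable dense set $Q_K\subset K$ (together with the countably many jump points) form a countable family of clopen-or-interval sets generating the Borel $\sigma$-algebra. The same holds for $L$ with a countable set $Q_L$.

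First, reduce to weak$^*$ convergence tested on a countable family. Since $K\times L$ is separable, each $C(K)$, $C(L)$ need not be separable, but by \cite[Lemma 3.9]{CanKau} every $f\in C(K\times L)$ can be uniformly approximated by step functions of the form $\sum_{i,j} c_{ij}\mathbf 1_{(x_{i-1},x_i]\times(y_{j-1},y_j]}$ with endpoints from $K$ and $L$. I would apply that lemma coordinatewise, using uniform continuity on the compact product together with the order structure. Now fix countable dense sets $Q_K\subset K$ and $Q_L\subset L$ that contain all points having an immediate successor or predecessor; in a separable line there are only countably many such jumps. The aim is to replace arbitrary endpoints by endpoints in $Q_K$, $Q_L$ up to $\varepsilon$ in $|\mu|$-measure. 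This works because $|\mu|$ has only countably many atoms on horizontal and vertical lines, and the marginal distribution functions are monotone, so one can pick $Q$-points where the marginals are nearly continuous from the appropriate side.

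Next, construct $\mu_n$. Enumerate finite grids $P_n\subset Q_K$ and $Q_n\subset Q_L$ that increase and have dense unions. The grid gives a partition of $K\times L$ into finitely many rectangles $R=(x_{i-1},x_i]\times(y_{j-1},y_j]$ (plus degenerate boundary pieces at $0_K$, $0_L$). Define
\[
\mu_n=\sum_{R}\mu(R)\,\delta_{(x_i,y_j)},
\]
that is, place the mass of each rectangle at its upper-right corner. Then $\|\mu_n\|\le\sum_R|\mu(R)|\le\|\mu\|\le1$. For any step function $h$ built on the grid $P_m\times Q_m$ with $m\le n$ and evaluated at right endpoints, we get $\int h\,d\mu_n=\int h\,d\mu$ exactly. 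Given $f$ and $\varepsilon$, choose such an $h$ with $\|f-h\|_\infty<\varepsilon$ on the relevant set. This gives $|\int f\,d\mu_n-\int f\,d\mu|\le 2\varepsilon$ for all large $n$, and hence $\mu_n\xrightarrow{w^*}\mu$.

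The main obstacle is the approximation step: showing that every $f\in C(K\times L)$ is within $\varepsilon$ of a step function whose grid lies in the fixed countable sets $Q_K$ and $Q_L$, and that on each cell $f$ is within $\varepsilon$ of its value at the upper-right corner. I would first try to prove this by applying the one-dimensional lemma \cite[Lemma 3.9]{CanKau} to the families $\{f(\cdot,y)\}$ and $\{f(x,\cdot)\}$, which are equicontinuous by compactness. The problem is the cells of the second kind, where $x_i$ is left-isolated; there the corner value is exact, because the cell contains only $x_i$ in that coordinate. Separability of $K$ and $L$ should ensure that there are only countably many such isolated jumps, so they can all be put into $Q_K$ and $Q_L$ from the start. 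If uniform approximation fails, the fallback is to approximate only in $L_1(|\mu|)$, using that $\mu$-null boundary effects can be avoided by choosing grid points off the countably many atoms of the marginals of $|\mu|$.
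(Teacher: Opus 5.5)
\textbf{There is a genuine gap in the convergence argument.} Your measures $\mu_n=\sum_R\mu(R)\,\delta_{(x_i,y_j)}$ are, up to the treatment of the boundary pieces, the paper's push-forwards $(\Phi_n)_*\mu$ under the round-up maps $r_n,s_n$. Your observation that $\int h\,d\mu_n=\int h\,d\mu$ for step functions on a coarser grid is correct.

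The problem is the structural fact on which the convergence rests. A separable compact line need \emph{not} have only countably many jumps. In fact, a separable compact line with countably many jumps is second countable, hence metrizable, so your claim would reduce the lemma to the trivial metrizable case.

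\textbf{Counterexample.} Take the double arrow space $K=([0,1]\times\{0,1\})\setminus\{(0,0),(1,1)\}$ with the lexicographic order. The set $\{(q,1):q\in\mathbb Q\cap[0,1)\}$ is dense, yet every $(t,0)$ has the immediate successor $(t,1)$. So no countable $Q_K$ contains all jump points, and your ``main obstacle'' step is false, not just hard. The function $f(x,y)=\mathbf 1_{[0_K,(t,0)]}(x)$ is continuous on $K\times L$, because $[0_K,(t,0)]=[0_K,(t,1))$ is clopen. But a step function on cells $(a,b]\times(c,d]$ with $a,b\in Q_K$ can be within $1/2$ of $f$ only if $(t,0)\in Q_K$.

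Worse, with any grid fixed independently of $\mu$, the construction itself converges to the wrong limit. If $(t,0)\notin Q_K$ and $\mu=\delta_{((t,0),y_0)}$, then $r_n((t,0))$ decreases to $\inf\{d\in Q_K\cup\{1_K\}:d\ge (t,0)\}=(t,1)$. Hence $\mu_n\to\delta_{((t,1),y_1)}\neq\mu$ for some $y_1\in L$.

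\textbf{The missing idea: let the grid depend on $\mu$.} Add to countable dense subsets of $K$ and $L$ the countably many atoms of the marginals $(\pi_K)_*|\mu|$ and $(\pi_L)_*|\mu|$. This is the opposite of your fallback, which proposes to avoid these atoms. With this choice no uniform approximation is needed, as follows.

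\begin{enumerate}
\item $r_n(x)$ decreases to $\rho(x)=\inf\{d\in Q_K\cup\{1_K\}:d\ge x\}$. Density of $Q_K$ forces $\rho(x)=x$, unless $x\notin Q_K$ has an immediate successor $x^+$, in which case $\rho(x)=x^+$. The same holds for $s_n$ and its limit map $\sigma$.
\item Fix $f\in C(K\times L)$ and $\varepsilon>0$. Only finitely many $x$ satisfy $\sup_{y}|f(x,y)-f(x^+,y)|>\varepsilon$, because along a monotone sequence of such points, $x_k$ and $x_k^+$ have the same limit. The symmetric statement holds in $L$.
\item Therefore $f\circ\Phi_n\to f\circ(\rho\times\sigma)$ pointwise, and $f\circ(\rho\times\sigma)=f$ outside $(C_K\times L)\cup(K\times C_L)$. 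Here $C_K\subset K\setminus Q_K$ and $C_L\subset L\setminus Q_L$ are countable.
\item This exceptional set is $|\mu|$-null, because $Q_K$ and $Q_L$ contain the marginal atoms. Dominated convergence then gives $\int f\,d\mu_n=\int f\circ\Phi_n\,d\mu\to\int f\,d\mu$.
\end{enumerate}

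This is the paper's route: push-forward plus dominated convergence, with no uniform approximation. Note, however, that the paper asserts $r_n(x)\to x$ for every $x$, with $D_K$ an arbitrary countable dense set. That assertion fails at the same points $(t,0)$, so the paper's argument needs this same modification.
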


\begin{proof}
Since $K$ and $L$ are separable compact lines, fix countable dense sets $D_K=\{d_1,d_2,\dots\}\subset K$ and $D_L=\{e_1,e_2,\dots\}\subset L$. For each $n\in\mathbb N$, define maps $r_n:K\to K$ and $s_n:L\to L$ by the formulae
\[
r_n(x)=\min\{d\in \{1_K,d_1,\ldots,d_n\}:x\le d\}
\quad\text{and}\quad
s_n(y)=\min\{e\in \{1_L,e_1,\ldots,e_n\}:y\le e\}.
\]

We note that, for each $n\in \mathbb{N}$, both $r_n$ and $s_n$ are well-defined Borel measurable functions. Indeed, ordering the set $\{1_K,d_1,\ldots,d_n\}$ as $c_0<c_1<\ldots<c_n$, we obtain that $r_n^{-1}(\{c_0\})=[0_K,c_0]$ and $r_n^{-1}(\{c_k\})=(d_{k-1},d_k]$ if $1<k\leq n$. The same argument applies to $s_n$. Moreover, for each $x\in K$ and each $y \in L$, we have $r_n(x)\to x$ and $s_n(y)\to y$. Then, for each $n \in \mathbb{N}$, we may consider the Borel measurable function $\Phi_n:K\times L\to  K\times L$, given by $\Phi_n(x,y)=\bigl(r_n(x),s_n(y)\bigr)$.

Let $\mu\in B_{C(K\times L)^*}$ be arbitrary. For each $n \in \mathbb{N}$, let
\[
\mu_n=(\Phi_n)_*\mu
\]
be the push-forward of $\mu$ by the function $\Phi_n$. Since $\Phi_n$ has finite range, $\mu_n$ is a finitely supported signed measure.
Moreover, $\|\mu_n\|\le\|\mu\|\le 1$.

Let $f\in C(K\times L)$ be arbitrary. Since $f\circ \Phi_n(x,y)\to f(x,y)$ for every $(x,y)\in K\times L$, and
$\sup_n \|f\circ \Phi_n\|\le \|f\|_\infty$, the dominated convergence theorem implies
\[
\lim_{n\to \infty}\int f\,d\mu_n
=
\lim_{n\to \infty}\int f\circ \Phi_n\,d\mu
=
\int f\,d\mu.
\]

We deduce that $\mu_n\xrightarrow{w^*}\mu$. This completes the proof.
\end{proof}

\section*{Acknowledgements}

The author is grateful to Professor Vin\'icius Morelli Cortes (IME--USP) for
suggesting the main idea behind the proof of
Theorem~\ref{Thm:DeltaInjectiveTensor} and for generously allowing its
inclusion in this work. The author also thanks him for many helpful
discussions on tensor products, as well as for a course on the subject taught
in 2021, which proved valuable in the development of the present research.

This research was supported by the Funda\c{c}\~ao de Amparo \`a Pesquisa do Estado de S\~ao Paulo (FAPESP), Grant No.~2023/12916-1.


\begin{thebibliography}{HD}

\normalsize
\baselineskip=17pt

\bibitem{Banach}
S. Banach,
\emph{Th\'eorie des op\'erations lin\'eaires},
Monografie Matematyczne,
Warsaw, 1932.

\bibitem{CanKau}
L. Candido and P. L. Kaufmann,
\emph{Kurzweil--Stieltjes integration on compact lines},
Positivity \textbf{30} (2026), Art.~6.
doi:10.1007/s11117-025-01161-9.

\bibitem{Candido2016}
L. Candido,
\emph{On embeddings of $C_0(K)$ spaces into $C_0(L,X)$ spaces},
Studia Math. \textbf{233} (2016), no.~3, 1--6.

\bibitem{CandidoSquares}
L. Candido,
\emph{On the Semadeni derivative of Banach spaces $C(K,X)$},
Studia Math. \textbf{266} (2022), no.~2, 225--240.

\bibitem{Diestel}
J. Diestel,
\emph{Sequences and Series in Banach Spaces},
Springer, New York, 1984.

\bibitem{EGT}
K. P. Hart, J. Nagata and J. E. Vaughan (eds.),
\emph{Encyclopedia of General Topology},
Elsevier, Amsterdam, 2004.

\bibitem{Fabian}
M. Fabian, P. Habala, P. H\'ajek, V. Montesinos and V. Zizler,
\emph{Banach Space Theory: The Basis for Linear and Nonlinear Analysis},
CMS Books in Mathematics,
Springer, New York, 2011.

\bibitem{galego1}
E. M. Galego,
\emph{Banach spaces of continuous vector-valued functions of ordinals},
Proc. Edinb. Math. Soc. (2) \textbf{44} (2001), no.~1, 49--62.

\bibitem{galego2}
E. M. Galego,
\emph{Complete isomorphic classifications of some spaces of compact operators},
Proc. Amer. Math. Soc. \textbf{138} (2010), 725--736.

\bibitem{galego3}
E. M. Galego,
\emph{How to generate new Banach spaces non-isomorphic to their Cartesian squares},
Bull. Polish Acad. Sci. Math. \textbf{47} (1999), no.~1, 21--25.

\bibitem{galego4}
E.~M. Galego,
\newblock Spaces of compact operators on $C(2^m\oplus[0,\alpha])$ spaces,
\newblock {\em J. Math. Anal. Appl.} \textbf{370} (2010), no.~2, 406--414.

\bibitem{Gelfand1938}
I. M. Gelfand,
\emph{Abstrakte Funktionen und lineare Operatoren},
Mat. Sb. \textbf{46} (1938), 235--286.

\bibitem{Jech}
T. Jech,
\emph{Set Theory},
Springer Monographs in Mathematics,
Springer, Berlin, 2003.

\bibitem{Kakutani}
S. Kakutani,
\emph{Concrete representation of abstract $(M)$-spaces},
Ann. of Math. (2) \textbf{42} (1941), 994--1024.

\bibitem{Kapp}
T. Kappeler,
\emph{Banach Spaces with the Condition of Mazur},
Math. Z. \textbf{191} (1986), 623--631.

\bibitem{Kislyakov}
S. V. Kislyakov,
\emph{Classification of spaces of continuous functions of ordinals},
Siberian Math. J. \textbf{16} (1975), no.~2, 226--231.

\bibitem{KorpalskiThesis}
M. Korpalski,
\emph{O izomorficznych i geometrycznych w\l asno\'sciach przestrzeni Banacha funkcji ci\c{a}g\l ych},
Ph.D. thesis,
University of Wroc\l aw, 2026.

\bibitem{Korpalski}
M. Korpalski,
\emph{Semadeni--Pe{\l}czy\'nski Derivative and Banach Spaces of Continuous Functions on Nonmetrizable Cubes},
arXiv:2502.16981 [math.FA], 2025.

\bibitem{Leung}
D. Leung,
\emph{On Banach spaces with Mazur's property},
Glasgow Math. J. \textbf{33} (1991), 51--54.

\bibitem{Mercourakis}
S. Mercourakis,
\emph{Some remarks on countably determined measures and uniform distribution of sequences}.
Monatsh. Math. \textbf{121}, 79--111 (1996)

\bibitem{Natanson}
I. P. Natanson,
\emph{Theory of Functions of a Real Variable. Vol.~I},
Translated from the Russian by L. F. Boron,
edited and annotated by E. Hewitt,
revised edition,
Frederick Ungar Publishing Co., New York, 1961.

\bibitem{Namioka}
I. Namioka,
\emph{Radon-Nikod\'ym compact spaces and fragmentability},
Mathematika \textbf{34} (1987), no.~2, 258--281.

\bibitem{Ronchim}
V. dos S. Ronchim,
\emph{A Study in Set-Theoretic Functional Analysis: Extensions of $c_0(I)$-valued Operators on Linearly Ordered Compacta and Weaker Forms of Normality on Psi-Spaces},
Ph.D. thesis,
Univ. S\~ao Paulo, 2021.

\bibitem{Ryan}
R. A. Ryan,
\emph{Introduction to Tensor Products of Banach Spaces},
Springer Monographs in Mathematics,
Springer, London, 2002.

\bibitem{Se}
Z. Semadeni,
\emph{Banach Spaces of Continuous Functions, Vol.~I},
Monografie Matematyczne 55,
PWN, Warsaw, 1971.

\bibitem{Se2}
Z. Semadeni,
\emph{Banach spaces non-isomorphic to their Cartesian squares II},
Bull. Polish Acad. Sci. Math. \textbf{8} (1960), 81--84.

\bibitem{Singer}
I. Singer,
\emph{Best Approximation in Normed Linear Spaces by Elements of Linear Subspaces},
Springer, Berlin--New York, 1970.

\bibitem{Tod}
S. Todor\v{c}evi\'c,
\emph{Trees and linearly ordered sets},
in \emph{Handbook of Set-Theoretic Topology},
K. Kunen and J. E. Vaughan (eds.),
North-Holland, Amsterdam, 1984, pp.~235--293.

\end{thebibliography}
\end{document}